\documentclass[12pt]{amsart}
\usepackage{amssymb,amsmath,amsthm,mathrsfs,multirow,xcolor,framed,url}
\usepackage[pdftex,
         pdftitle={Hurwitz class numbers and mock theta functions},
         pdfproducer={Latex with hyperref},
         pdfcreator={pdflatex}]{hyperref}
\oddsidemargin = 0.0cm
\evensidemargin = 0.0cm
\textwidth = 6.5in
\textheight =8.0in

\newtheorem{theorem}{Theorem}[section]
\newtheorem{lemma}[theorem]{Lemma}
\newtheorem{cor}[theorem]{Corollary}

\newtheorem{prop}[theorem]{Proposition}

\theoremstyle{definition}

\theoremstyle{remark}

\numberwithin{equation}{section}

\newcommand\nutwid{\overset {\text{\lower 3pt\hbox{$\sim$}}}\nu}











\allowdisplaybreaks




\newcommand{\beqs}{\begin{equation*}}
\newcommand{\eeqs}{\end{equation*}}
\newcommand{\beq}{\begin{equation}}
\newcommand{\eeq}{\end{equation}}

\DeclareMathOperator{\IM}{Im}

\makeatletter
\@namedef{subjclassname@2020}{\textup{2020} Mathematics Subject Classification}
\makeatother

\begin{document}

\title[Hurwitz class numbers and mock theta functions]{Generating functions of the Hurwitz class numbers associated with certain mock theta functions}

\author{Dandan Chen}
\address{Department of Mathematics, Shanghai University, Shanghai, People’s Republic of China}
\email{mathcdd@shu.edu.cn}
\author{Rong Chen}
\address{School of Mathematical Sciences, Tongji University, Shanghai, People’s Republic of China}
\email{rongchen20@tongji.edu.cn}

\subjclass[2020]{11E41, 11P84, 11F37}

\keywords{Hurwitz class number, mock theta functions, Hecke-Rogers series.}

\begin{abstract}
We find Hecke-Rogers type series representations of generating functions of the Hurwitz class numbers which are similar to certain mock theta functions. We also prove two combinatorial interpretations of Hurwitz class numbers which appeared on OEIS (see A238872 and  A321440).
\end{abstract}

\maketitle

\section{Introduction}

Following \cite[Section 5.3]{Co-93}, we define the Hurwitz class number $H(N)$, where $N$ is a non-negative integer, as follows:

(1)If $N\equiv 1,2 \pmod 4$ then $H(N)=0$.

(2)If $N=0$ then $H(0)=-1/12$.

(3)If $N>0$, $N\equiv 0,3 \pmod 4$, then $H(N)$ is the class number of positive definite binary quadratic forms of discriminant $-N$, with those classes that contain a multiple of $x^2+y^2$ or $x^2+xy+y^2$ counted with weight $1/2$ or $1/3$, respectively.

For the modular behaviors, we use Kronecker's notation \cite{Kr-60} $F(n)$ satisfying $F(8n+7)=H(8n+7)$ and $F(8n+3)=3H(8n+3)$, see also \cite{Wa-35} where denote $H(n)$ as $F_1(n)$. The generating functions of $F(an+b)$ are defined as
$$
\mathscr{F}_{a,b}(q):=\sum_{n=0}^{\infty}F(an+b)q^n.
$$

In 1935 Watson \cite{Wa-35} studied the transformations of such generating functions of $H(n)$  by using Mordell integrals. Hirzebruch and Zagier \cite[Section 2.2]{Hi-Za-76} discussed the relation between the generating functions of $H(n)$ and certain weight $3/2$ modular forms. On the other hand, Zwegers \cite{Zw-02} found that mock theta functions are related to weight $1/2$ modular forms. In this paper, we find some similar results between the generating functions of Hurwitz class numbers and certain mock theta functions.

Mock theta functions were first mentioned by Ramanujan in his last letter to Hardy in 1920 as Eulerian forms. To study the modular behaviors, many authors such as Watson \cite{Wa-36,Wa-37}, Andrews \cite{An-86}, Andrews and Hickerson \cite{An-Hi-91}, Berndt and Chan \cite{Be-Ch-07}, Choi \cite{Ch-99,Ch-00,Ch-02,Ch-07}, Garvan \cite{Ga-19}, Gordon and McIntosh \cite{Go-Mc-00}, Hickerson \cite{Hi-88} and Zwegers \cite{Zw-09} find the Appell-Lerch series and Hecke-Rogers series of mock theta functions. In a recent work, Hickerson and Mortenson \cite{Hi-Mo-14} used the ``m-block" to list the Appell-Lerch series of all the mock theta functions.

Recently,  the second author and Garvan \cite{Ch-Ga} provided some of  modulo 4 congruences  between the Hurwitz class numbers and the coefficients of certain mock theta functions. For example \cite[Lemma 3.1]{Ch-Ga}
$$
N_A(n)\equiv (-1)^{n+1}H(8n-1) \pmod4,
$$
where $N_A(n)$ are coefficients of the second order mock theta function $A(q)$ (see \cite{Mc-07})
$$
A(q):=\sum_{n=0}^{\infty}\frac{q^{(n+1)^2}(-q;q^2)_n}{(q;q^2)^2_{n+1}}.
$$
We always assume that $|q|<1$ and use the standard $q$-series notations:
\begin{align*}
&(a;q)_0:=1, \quad (a;q)_n:=\prod_{k=0}^{n-1}(1-aq^k), \quad (a;q)_\infty:=\prod_{k=0}^\infty(1-aq^k),\\
&(a_1,a_2,\cdots,a_m;q)_n=(a_1;q)_n(a_2;q)_n\cdots(a_m;q)_n, \quad n\in\mathbb{N}\cup\{\infty\}.
\end{align*}
To find further relation, we find the Hecke-Rogers series of some of $\mathscr{F}_{a,b}(q)$ by following one of the methods of studying mock theta functions. Something amazing is that the Hecke-Rogers series we find are quite similar to certain mock theta functions.

Cui, Gu and Hao \cite[Eq. (2.24)]{Cu-Gu-Ha-18} showed the Hecke-Rogers series of $A(q)$ and we rewrite it as
\beq
\label{Aq0}
\frac{J_1^2}{J_2}A(q)=\sum_{1\leq j\leq |n|}sg(n)(-1)^{n-1}q^{2n^2-n-j^2+j},
\eeq
where, as usual, $sg(n)=1$ if $n\geq 0$ and $sg(n)=-1$ if $n<0$ and
$$
J_k:=(q^k;q^k)_\infty.
$$
For $\mathscr{F}_{8,-1}(q)$, there is a similar identity
\beq
\label{H80}
\frac{J_1^2}{J_2}\mathscr{F}_{8,-1}(q)=\sum_{1\leq j\leq |n|}sg(n)(-1)^{j-1}(2j-1)q^{2n^2-n-j^2+j},
\eeq
which appeared in \cite[P.376]{Hu-07}. \eqref{H80} is quite similar to \eqref{Aq0} and we find a $z$-analog of these two identities.

\begin{theorem}
\beq
\label{H8z0}
(zq,q/z,q^2;q^2)_\infty F_8(z,q)=\sum_{1\leq j\leq |n|}sg(n)(-1)^{j-1}q^{2n^2-n-j^2+j}\cdot \frac{z^{1-j}-z^j}{1-z},
\eeq
where
\beq
\label{H8zd0}
F_8(z,q)=\sum_{n=0}^{\infty}\frac{(-1)^n(q;q^2)_nq^{(n+1)^2}}{(zq,q/z;q^2)_{n+1}}.
\eeq
\end{theorem}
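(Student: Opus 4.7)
The plan is to establish the identity by expanding both sides as formal Laurent series in $z$ and matching coefficients of $z^k$ term by term.

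First, I would simplify the left-hand side via the telescoping identity
\[
\frac{(zq,q/z,q^2;q^2)_\infty}{(zq,q/z;q^2)_{n+1}}=(zq^{2n+3},q^{2n+3}/z,q^2;q^2)_\infty,
\]
rewriting it as $\sum_{n\ge 0}(-1)^n(q;q^2)_nq^{(n+1)^2}(zq^{2n+3},q^{2n+3}/z,q^2;q^2)_\infty$. Expanding each of the two shifted factors using Euler's identity $(w;q^2)_\infty=\sum_{k\ge 0}(-w)^kq^{k(k-1)}/(q^2;q^2)_k$ (once with $w=zq^{2n+3}$ and once with $w=q^{2n+3}/z$) produces a formal triple sum indexed by $(n,\alpha,\beta)$ in which the total power of $z$ is $\alpha-\beta$.

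On the right-hand side, the factor $(z^{1-j}-z^j)/(1-z)=\sum_{k=1-j}^{j-1}z^k$ makes the Hecke-Rogers sum a formal Laurent series $\sum_k c_k(q)z^k$, with each $c_k(q)$ an explicit double sum over $(n,j)$ in the range $|n|\ge|k|+1$, $|k|+1\le j\le|n|$. The bulk of the argument is to verify, for each $k\in\mathbb{Z}$, that the $z^k$-coefficient produced from the LHS expansion equals $c_k(q)$. For fixed $k$ the LHS coefficient is a triple $q$-sum subject to $\alpha-\beta=k$, and I would attempt to collapse it to a closed-form double sum by evaluating the inner $n$-summation via a $q$-Chu--Vandermonde or $q$-Pfaff--Saalsch\"utz identity, applied to the combination of $(q;q^2)_n$ with the two $(q^{2n+2};q^2)$-Pochhammers that appear after rewriting $\binom{n+\alpha}{\alpha}_{q^2}=(q^{2n+2};q^2)_\alpha/(q^2;q^2)_\alpha$.

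The main obstacle I anticipate is the final combinatorial accounting that produces the sign $sg(n)(-1)^{j-1}$ and sharply restricts the surviving terms to the Hecke-Rogers region $1\le j\le|n|$: the collapsed sum generically carries extra terms, and a sign-reversing involution on pairs of terms outside this region is needed. A cleaner alternative, which I would try in parallel, is to package the whole calculation as a $z$-parametrized Bailey pair whose Bailey transform yields the LHS directly while the $\alpha$-side, combined with the Jacobi triple product, produces the Hecke-Rogers form on the RHS; this would extend the approach used in \cite{Cu-Gu-Ha-18} for $A(q)$ and in \cite{Hu-07} for $\mathscr{F}_{8,-1}(q)$ to accommodate the additional parameter $z$.
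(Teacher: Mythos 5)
Your overall frame---expand both sides as Laurent series in $z$ and match the coefficient of each $z^k$---is sound, and it is in fact how the paper closes its argument. But the proposal has a genuine gap at its center, one you flag yourself: everything hinges on collapsing the triple sum obtained from the Eulerian series, and the tools you name will not do it. After your (correct) telescoping step and two applications of Euler's identity, the inner $n$-summation you must evaluate for fixed $(\alpha,\beta)$ has the shape $\sum_{n\ge 0}(-1)^n(q;q^2)_n\,q^{n^2+cn}$ with $c=2+2\alpha+2\beta$. The free quadratic exponent $q^{n^2}$ with no compensating denominator Pochhammer puts this outside the reach of $q$-Chu--Vandermonde and $q$-Pfaff--Saalsch\"utz, which evaluate terminating balanced series; sums of this shape are exactly the partial/false-theta and Appell--Lerch objects whose non-modularity is the whole point of the subject, and taming them is the actual content of the theorem. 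That content is deferred in your sketch to an unspecified ``sign-reversing involution,'' and the Bailey-pair alternative, while more promising in spirit, is likewise not carried out. So as written this is a plan whose decisive step is missing and whose named summation tools would fail.

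The paper sidesteps the Eulerian series entirely by inserting an intermediate representation. Starting from Mortenson's identity
\[
F_8(z,q)=\frac{-z}{1-z}\Bigl(m(-z,q,-1)-\frac{j(q;q^2)^2}{2j(z;q)}\Bigr),
\]
it uses Lemma \ref{8lem1} (built from Theorem \ref{mthe33} and Corollary \ref{mcor37}) to split $m(-z,q,-1)$ into two Appell--Lerch sums with base $q^4$, arriving at Theorem \ref{8th1}:
\[
(z^2q^2,q^2/z^2,q^4;q^4)_\infty F_8(z,q)=\sum_{k=-\infty}^{\infty}\frac{(-1)^{k-1}q^{2k^2}}{1+q^{4k-1}}\cdot\frac{z^{1-2k}-z^{2k}}{1-z}.
\]
Only then does it compare coefficients of $z^k$: on the Appell--Lerch side the coefficient of $z^{2k}$ is the single term $\frac{J_2^2}{J_4}\cdot\frac{(-1)^kq^{2k^2}}{1+q^{4k-1}}$, while on the Hecke--Rogers side the finite $j$-sum telescopes (as a geometric series in $q^{4k-1}$) to produce the denominator $1+q^{4k-1}$, and the remaining bilateral $n$-sum is a genuine theta function equal to $J_2^2/J_4$. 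If you want to salvage your plan, the missing ingredient is precisely this Appell--Lerch intermediary \eqref{8th1id} (or, equivalently, an explicit Bailey pair for $F_8(z,q)$); without it, the direct expansion of the Eulerian series does not close.
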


Our proof is based on the formulas of generating functions of Hurwitz class numbers by Alfes, Bringmann and Lovejoy \cite{Al-Br-Lo-11}, then we use the formulas of Appell-Lerch series by Hickerson and Mortenson \cite{Hi-Mo-14}. It showed that $F_8(1,q)=\mathscr{F}_{8,-1}(q)$ as a special case of $N^0(a,b;z;q)$ in \cite[Eq. (1.5)]{Al-Br-Lo-11}. Setting $(z,q)=(1,q)$ and $(z,q)=(-1,-q)$, noting that $F_8(-1,-q)=-A(q)$, \eqref{H8z0} yields \eqref{H80} and \eqref{Aq0}, respectively.

We obtain a similar result for the eighth order mock theta function $V_1(q)$ and $\mathscr{F}_{4,-1}(q)$, where (see \cite{Go-Mc-00})
$$
V_1(q):=\sum_{n=0}^{\infty}\frac{q^{(n+1)^2}(-q;q^2)_n}{(q;q^2)_{n+1}}.
$$

\begin{theorem}
For $z\in \mathbb{C^*}$ and $z\neq 1$, we have
\beq
\label{4th2id}
(-zq,-1/z,q;q)_\infty F_4(z,-q)=\sum_{1\leq j\leq n}q^{n^2-j(j-1)/2}\cdot \frac{z^n-z^{-n}}{1-z},
\eeq
where
$$
F_4(z,q)=\sum_{n=0}^{\infty}\frac{(-1)^n(q;-q)_{2n}q^{n+1}}{(zq,q/z;q^2)_{n+1}}.
$$
\end{theorem}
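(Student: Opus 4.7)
The plan is to follow the same two-step strategy that underlies Theorem~1.1: first rewrite the normalized two-variable object on the left of~\eqref{4th2id} as an Appell--Lerch sum using the Hurwitz class number formulas of Alfes, Bringmann and Lovejoy~\cite{Al-Br-Lo-11}, and then convert that Appell--Lerch sum into a Hecke--Rogers indefinite theta series via the m-block machinery of Hickerson and Mortenson~\cite{Hi-Mo-14}.

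First I would identify $F_4(z,q)$ as a specialization of the two-parameter family $N^0(a,b;z;q)$ of~\cite[Eq.~(1.5)]{Al-Br-Lo-11}, so that $F_4(1,q)=\mathscr{F}_{4,-1}(q)$ holds, paralleling the remark that $F_8(1,q)=\mathscr{F}_{8,-1}(q)$ made after Theorem~1.1. The substitution $q\mapsto -q$ is natural here because $\mathscr{F}_{4,-1}$ is indexed on residues mod~$4$ and the $(q;-q)_{2n}$ factor in the definition of $F_4(z,q)$ is adapted to exactly that substitution. After this identification, the left-hand side of~\eqref{4th2id} becomes a theta-normalized mock object to which the Appell--Lerch decomposition of~\cite{Hi-Mo-14} can be applied, after splitting the defining $q$-hypergeometric sum by parity of the summation index.

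The second step is to convert from Appell--Lerch sums $m(x,q,z)$ to Hecke--Rogers double sums using the m-block formulas of~\cite{Hi-Mo-14}. Symmetrizing under $z\leftrightarrow 1/z$ should naturally produce the factor $(z^n-z^{-n})/(1-z)$ on the right, while the quadratic exponent $n^2-j(j-1)/2$ emerges from combining the quadratic exponent in the Appell--Lerch sum with those introduced by the normalizing product $(-zq,-1/z,q;q)_\infty$. As consistency checks I would specialize $z\to 1$ to recover the known Hecke--Rogers expansion of $\mathscr{F}_{4,-1}(q)$, and $z\to -1$ to obtain a Hecke--Rogers form of $V_1(q)$, in analogy with how~\eqref{H8z0} yields~\eqref{Aq0} at $(z,q)=(-1,-q)$.

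The principal obstacle, I expect, is the Appell--Lerch step: keeping careful track of the sign changes produced by $q\mapsto -q$ and of the shifts in the theta quotients is delicate, so that the final product on the left comes out exactly as $(-zq,-1/z,q;q)_\infty$ rather than as some equivalent but superficially different expression. Once the correct Appell--Lerch representation is in hand, the passage to the Hecke--Rogers form should be essentially formal, and the agreement with the right-hand side of~\eqref{4th2id} will follow by matching the resulting double sum term-by-term.
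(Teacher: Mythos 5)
Your first step matches the paper: Theorem \ref{4th1}, obtained from Mortenson's Appell--Lerch representation $(1-1/z)F_4(z,q)=m(-z,q^2,-q)$ together with Theorem \ref{mthe33} and Proposition \ref{mpro31}, gives $(zq,q/z,q^2;q^2)_\infty F_4(z,q)$ as the unilateral Appell--Lerch sum $\sum_{k}\frac{(-1)^{k-1}q^{k^2}}{1+q^{2k-1}}\cdot\frac{z^{1-k}-z^k}{1-z}$, and replacing $q$ by $-q$ is exactly how the paper begins the proof of \eqref{4th2id}. The gap is in your second step, which is the entire content of the theorem. You assert that passing from this Appell--Lerch form to the Hecke--Rogers double sum $\sum_{1\le j\le n}q^{n^2-j(j-1)/2}(z^n-z^{-n})/(1-z)$ is ``essentially formal'' via the $f_{a,b,c}=g_{a,b,c}$ m-block formulas of Hickerson--Mortenson. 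It is not: after reindexing, the quadratic form $n^2-j(j-1)/2$ over the region $1\le j\le n$ corresponds (up to rescaling the base) to an $f_{4,4,2}$-type sum, which is not one of the families ($f_{1,2,1}$, $f_{1,5,1}$) for which the paper quotes explicit $g_{a,b,c}$ expansions, and in any case applying that machinery requires identifying and cancelling the accompanying theta correction terms; none of this is indicated in your plan. (A smaller slip: the specialization recovering $V_1(q)$ is $z=i$, since $F_4(i,-q)=-V_1(q)$, not $z\to-1$.)

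The paper's actual argument for this step is elementary and quite different. After clearing the theta quotients, the claimed identity becomes \eqref{4th21}; writing $A_n$ and $B_n$ for the coefficients of $z^n$ on the two sides, one checks that both sequences satisfy $X_n+X_{-n}=0$ and the first-order recurrence $\frac{X_{n+1}}{q^{(n+1)^2}}-\frac{X_n}{q^{n^2}}=q^{-n(n+1)/2}\frac{J_2^2}{J_1}$. For $B_n$ this is immediate from the definition; for $A_n$ the difference telescopes to $\sum_{k}q^{(k-n-1)(2k-1)}$, which is shown to equal the $n$-independent theta constant $\sum_k q^{(2k-n)(2k+1-n)/2}=J_2^2/J_1$. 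Induction then gives $A_n=B_n$ for all $n$. To turn your proposal into a proof you would need either to supply an argument of this kind (coefficient comparison plus a recurrence or functional equation) or to carry out the m-block computation in full, including the theta correction terms; as written, the decisive step is only asserted.
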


We have that $F_4(1,q)=\mathscr{F}_{4,-1}(q)$ and $F_4(i,-q)=-V_1(q)$.

We also find and prove Hecke-Rogers series for $\mathscr{F}_{a,-1}(q),(a=12,24)$ which are similar to two sixth order mock theta functions $\sigma(q)$ (see \cite{An-Hi-91}) and $\phi_{-}(q)$ (see \cite{Be-Ch-07})
$$
\sigma(q):=\sum_{n=0}^{\infty}\frac{q^{(n+1)(n+2)/2}(-q;q)_n}{(q;q^2)_{n+1}},
$$
$$
\phi_{-}(q):=\sum_{n=1}^{\infty}\frac{q^n(-q;q)_{2n-1}}{(q;q^2)_n}.
$$
But we cannot find $z$-analog of Hecke-Rogers series for $\mathscr{F}_{a,-1}(q),(a=12,24)$.

In Section 2, we recall some useful formulas for Appell-Lerch series, Garvan's MAPLE packages and derivative of certain theta functions. In Section 3, we find further connections between Hurwitz class numbers and certain mock theta fucntions. In the final section, we prove combinatorial interpretations of $F(4n-1)$ and $H(8n-1)$ which appeared on OEIS (see A238872 and  A321440).

\section{Preparation}

\subsection{Appell-Lerch series}

Following \cite{Hi-Mo-14}, we define
$$
j(x;q):=(x,q/x,q;q)_\infty=\sum_{n=-\infty}^{\infty}(-1)^nq^{n(n-1)/2}x^n,
$$
 and the Appell-Lerch series
$$
m(x,q,z):=\frac{1}{j(z;q)}\sum_{r=-\infty}^{\infty}\frac{(-1)^rq^{r(r-1)/2}z^r}{1-q^{r-1}xz},
$$
and 'building block' of mock theta functions
$$
f_{a,b,c}(x,y,q):=\sum_{sg(r)=sg(s)}sg(r)(-1)^{r+s}x^ry^sq^{ar(r-1)/2+brs+cs(s-1)/2},
$$
and
\begin{align*}
&g_{a,b,c}(x,y,q,z_1,z_0)\\
:=&\sum_{t=0}^{a-1}(-y)^tq^{ct(t-1)/2}j(q^{bt}x;q^a)m(-q^{ab(b+1)/2-ca(a+1)/2-t(b^2-ac)}\frac{(-y)^a}{(-x)^b},q^{a(b^2-ac)},z_0)\\
&+\sum_{t=0}^{c-1}(-x)^tq^{at(t-1)/2}j(q^{bt}y;q^c)m(-q^{cb(b+1)/2-ac(c+1)/2-t(b^2-ac)}\frac{(-x)^c}{(-y)^b},q^{a(b^2-ac)},z_1).
\end{align*}

We will use the formulas of Appell-Lerch series in \cite{Hi-Mo-14} such as the following theorems. Let $n=1$ in \cite[Theorem 1.6]{Hi-Mo-14}.
\begin{theorem}
\label{mthe16}
For generic $x,y\in \mathbb{C^*}$, we have
$$
f_{1,2,1}(x,y,q)=g_{1,2,1}(x,y,q,y/x,x/y).
$$
\end{theorem}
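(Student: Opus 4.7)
This is the $n=1$ case of Theorem 1.6 of Hickerson--Mortenson \cite{Hi-Mo-14}, so a complete proof is already in that reference; below I sketch one natural verification strategy one would carry out.

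The plan is to recognize $f_{1,2,1}(x,y,q)$ as an indefinite theta series of signature $(1,1)$ attached to the quadratic form with Gram matrix $\left(\begin{smallmatrix}1&2\\2&1\end{smallmatrix}\right)$ (discriminant $b^2-ac=3$), and to carry out a cone decomposition that converts it into a sum of Appell--Lerch pieces. First I would split
$$
f_{1,2,1}(x,y,q)=\sum_{r,s\geq 0}(-1)^{r+s}x^ry^sq^{r(r-1)/2+2rs+s(s-1)/2}-\sum_{r,s<0}(-1)^{r+s}x^ry^sq^{r(r-1)/2+2rs+s(s-1)/2},
$$
reindex the negative piece via $r\mapsto -r-1$, $s\mapsto -s-1$, and then evaluate the inner sum in one variable (say $r$) at fixed $s$. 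For fixed $s$ the exponent is a shifted triangular number in $r$, so the Jacobi triple product collapses this partial sum into a factor of $j(xq^{2s};q)$ together with a rational expression in $q^s$. The remaining outer sum in $s$ has the shape of an Appell--Lerch series $m(q^2y/x^2,q^3,\cdot)$, where the modulus $q^3$ reflects $q^{a(b^2-ac)}$. Reversing the roles of $r$ and $s$ yields the companion term $j(y;q)\,m(q^2x/y^2,q^3,\cdot)$.

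The main obstacle is that the Appell--Lerch $m$-series is not uniquely determined by its first two arguments: two admissible choices for the third slot differ by a theta quotient of the form
$$
m(u,q,z_1)-m(u,q,z_0)=\frac{z_0\,(q;q)_\infty^3\,j(z_1/z_0;q)\,j(uz_0z_1;q)}{j(z_0;q)\,j(z_1;q)\,j(uz_0;q)\,j(uz_1;q)},
$$
so the specific choices $z_0=x/y$ and $z_1=y/x$ in the statement are not imposed by the calculation itself; they are forced by matching the "boundary walls" $r=0$ and $s=0$ of the two summation cones so that those contributions are counted once rather than twice or zero times. I would finish the proof by verifying that the difference of the two sides is elliptic in $x$ on $\mathbb{C}^*/q^{3\mathbb{Z}}$ with all residues cancelling once these $z$-parameters are locked in, hence a constant, and then specializing at a convenient point (for example $y=-qx$, which avoids the poles of the $m$-functions at $z\in q^{\mathbb{Z}}$) to pin the constant to zero.
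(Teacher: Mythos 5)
The paper never proves this statement: it is quoted verbatim as the $n=1$ case of Theorem 1.6 of Hickerson--Mortenson, which is exactly the citation you lead with, so your treatment coincides with the paper's. One caveat on your supplementary sketch, which does not affect the verdict since the result is being cited rather than reproved: the Jacobi triple product cannot collapse the \emph{one-sided} inner sum over $r\geq 0$ at fixed $s$ into a full theta function $j(xq^{2s};q)$ --- that step requires the more careful partial-theta bookkeeping actually carried out in Hickerson--Mortenson --- although your identification of the modulus $q^{a(b^2-ac)}=q^3$, of the two terms $j(x;q)\,m(q^2y/x^2,q^3,z_0)+j(y;q)\,m(q^2x/y^2,q^3,z_1)$, and of the role of the change-of-$z$ theta quotient in fixing $z_0=x/y$, $z_1=y/x$ are all consistent with the quoted statement.
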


Let $n=1$ in \cite[Theorem 1.11]{Hi-Mo-14}.

\begin{theorem}
\label{mthe111}
For generic $x,y\in \mathbb{C^*}$, we have
$$
f_{1,5,1}(x,y,q)=g_{1,5,1}(x,y,q,y/x,x/y)-\Theta_{1,4}(x,y,q),
$$
where
$$
\Theta_{1,4}(x,y,q):=\frac{-qxyj(y/x;q^{24})}{j(y/x;q^{24})j(-q^{10}x^4;q^{24})j(-q^{10}y^4;q^{24})}\left(j(q^4;q^{16})S_1-qj(q^8;q^{16})S_2\right),
$$
with
\begin{align*}
S_1:=&\frac{j(q^{22}x^2y^2;q^{24})j(-q^{12}y/x;q^{24})j(q^5xy;q^{12})}{J_{12}^3J_{48}}\cdot \bigg(j(-q^{10}x^2y^2;q^{24})j(q^{12}y^2/x^2;q^{24})J_{24}^2\\
&+\frac{q^5x^2j(-q^{22}x^2y^2;q^{24})j(q^{12}y/x;q^{24})^2j(-y/x;q^{24})^2}{J_{24}}\bigg),
\end{align*}
\begin{align*}
S_2:=&\frac{j(q^{10}x^2y^2;q^{24})j(-y/x;q^{24})j(q^{11}xy;q^{12})}{J_{12}^2}\\
&\cdot \bigg(\frac{q^2j(-q^{10}x^2y^2;q^{24})j(q^{12}y^2/x^2;q^{24})J_{48}}{yJ_{24}}+\frac{qxj(-q^{22}x^2y^2;q^{24})j(q^{24}y^2/x^2;q^{48})^2}{J_{48}}\bigg).
\end{align*}
\end{theorem}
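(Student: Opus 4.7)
The plan is to recognize Theorem~\ref{mthe111} as the $n=1$ specialization of the general result stated in \cite[Theorem 1.11]{Hi-Mo-14}, so at the level of this paper the ``proof'' is simply an invocation of that source with parameter substitution. What I would do to make this verification meaningful is walk through the conceptual steps behind the Hickerson--Mortenson derivation and confirm that they specialize correctly.

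The conceptual approach is the standard Appell--Lerch decomposition for indefinite Hecke--Rogers series. First I would split the double sum defining $f_{1,5,1}(x,y,q)$ according to the two residue classes $\mathrm{sg}(r) = \mathrm{sg}(s) = \pm 1$ and, within each, partition one summation variable modulo $a=1$ and the other modulo $c=1$. Because the quadratic form $r^2 + 5rs + s^2$ has discriminant $b^2 - ac = 24$, each inner sum can be recognized as a shifted Appell--Lerch series $m(\cdot\,, q^{24}, \cdot)$ multiplied by a theta factor $j(\cdot\,;q^{24})$; collecting these pieces produces precisely the expression $g_{1,5,1}(x,y,q,y/x,x/y)$ defined in the preparation section, with the two blocks corresponding to the $a$-sum and $c$-sum in the definition of $g_{a,b,c}$.

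The second and more delicate step is accounting for the discrepancy between this candidate and the true value of $f_{1,5,1}$. Since $b^2 - ac = 24$ is neither $0$ nor a perfect square that splits the lattice cleanly, a theta-function correction arises from the boundary terms in the fundamental lemma governing $m(x,q,z)$ transformations. The correction is exactly $\Theta_{1,4}(x,y,q)$: the prefactor $j(-q^{10}x^4;q^{24})\,j(-q^{10}y^4;q^{24})$ in its denominator reflects the quartic obstruction in each variable, and the splitting into $j(q^4;q^{16})S_1 - q\,j(q^8;q^{16})S_2$ separates the two residue classes modulo $16$.

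The main obstacle is verifying the precise algebraic form of $\Theta_{1,4}$, which requires manipulating products of theta functions $j(\cdot\,;q^{24})$ via Weierstrass-type three-term identities and checking the balance of powers of $q$, $x$, and $y$ in each of $S_1$ and $S_2$. Since all of this is already carried out in \cite{Hi-Mo-14} for general $n$, I would rely on their derivation and only check that substituting $n=1$ into their corrections yields the formulas for $S_1$ and $S_2$ displayed above.
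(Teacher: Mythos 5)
Your proposal matches the paper exactly: the paper's entire ``proof'' of this theorem is the citation ``Let $n=1$ in \cite[Theorem 1.11]{Hi-Mo-14},'' and your invocation of that result with the specialization $a=c=1$, $b=5$, $b^2-ac=24$ is precisely what is intended. The additional conceptual sketch of the Hickerson--Mortenson decomposition is consistent with their method and does no harm, but it is not required here.
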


Some basic propositions of $m(x,q,z)$ can be obtained in \cite[Proposition 3.1]{Hi-Mo-14}.

\begin{prop}
\label{mpro31}
For generic $x,z \in \mathbb{C^*}$, we have
$$
m(x,q,z)=m(x,q,qz),
$$
$$
m(x,q,z)=x^{-1}m(x^{-1},q,z^{-1}),
$$
$$
m(qx,q,z)=1-xm(x,q,z),
$$
$$
m(x,q,z)=1-q^{-1}xm(q^{-1}x,q,z),
$$
$$
m(x,q,z)=x^{-1}-x^{-1}m(qx,q,z).
$$
\end{prop}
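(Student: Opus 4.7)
My plan is to derive each of the five identities by direct manipulation of the defining series
$$m(x,q,z) = \frac{1}{j(z;q)} \sum_{r \in \mathbb{Z}} \frac{(-1)^r q^{r(r-1)/2} z^r}{1 - q^{r-1} xz},$$
relying on two quasi-periodicity properties of the theta function that follow at once from its infinite-product expansion: namely $j(qz;q) = -z^{-1} j(z;q)$ and $j(z^{-1};q) = -z^{-1} j(z;q)$. Everything else reduces to shifts of the summation index and the Jacobi triple product.

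For the first identity $m(x,q,z) = m(x,q,qz)$, I will substitute $z \mapsto qz$ in the defining series, then shift the summation index by one so that the denominator returns to the form $1 - q^{r-1}xz$; the factor of $q^r$ absorbed into the numerator together with the shift produces an overall $-z^{-1}$, which cancels exactly against the $-z^{-1}$ coming from $j(qz;q) = -z^{-1}j(z;q)$. For the second identity $m(x,q,z) = x^{-1} m(x^{-1},q,z^{-1})$, I will expand $m(x^{-1},q,z^{-1})$, substitute $r \mapsto 1-r$ (under which $r(r-1)/2$ is invariant), rewrite $1 - q^{-r}x^{-1}z^{-1}$ as $-q^{-r}x^{-1}z^{-1}(1 - q^{r}xz)$, and then shift the index once more so the denominator matches $1 - q^{r-1}xz$; combining this with $j(z^{-1};q) = -z^{-1}j(z;q)$ reduces the expression to $x\,m(x,q,z)$.

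The substantive identity is the third one, $m(qx,q,z) + x\,m(x,q,z) = 1$. My approach is to shift the summation index in $x\,m(x,q,z)$ so that its denominator becomes $1 - q^{r}xz$, matching the denominator appearing in $m(qx,q,z)$. When the two series are combined, the numerator factors as $(-1)^r q^{r(r-1)/2}z^r(1 - q^r xz)$, so that the $(1 - q^r xz)$ cancels the denominator, and the remaining sum is exactly $\sum_{r \in \mathbb{Z}} (-1)^r q^{r(r-1)/2} z^r = j(z;q)$ by the Jacobi triple product; this cancels the prefactor $1/j(z;q)$ and yields $1$.

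The remaining two identities are purely formal consequences: identity (4) is obtained from (3) by the substitution $x \mapsto q^{-1}x$, and identity (5) is (3) solved for $m(x,q,z)$. I do not anticipate any genuine obstacle here; the only thing to watch is the careful bookkeeping of signs and index shifts in identities (1)--(3), and the series all converge absolutely in the generic regime where the denominators $1 - q^{r-1}xz$ do not vanish.
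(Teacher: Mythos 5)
Your proposal is correct. Note that the paper does not prove this proposition at all --- it is imported verbatim from Hickerson and Mortenson \cite[Proposition 3.1]{Hi-Mo-14} --- and your direct verification (index shifts in the bilateral sum combined with the quasi-periodicity $j(qz;q)=-z^{-1}j(z;q)$, $j(z^{-1};q)=-z^{-1}j(z;q)$, plus the telescoping of $m(qx,q,z)+x\,m(x,q,z)$ against the Jacobi triple product) is exactly the standard argument given in that source; identities (4) and (5) are, as you say, formal rearrangements of (3). All the sign and exponent bookkeeping in your outline checks out, so there is no gap.
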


\begin{theorem}(\cite[Theorem 3.3]{Hi-Mo-14})
\label{mthe33}
For generic $x,z_0,z_1\in \mathbb{C^*}$, we have
$$
m(x,q,z_1)-m(x,q,z_0)=\frac{z_0J_1^3j(z_1/z_0;q)j(xz_0z_1;q)}{j(z_0;q)j(z_1;q)j(xz_0;q)j(xz_1;q)}.
$$
\end{theorem}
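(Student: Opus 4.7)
The plan is to prove the identity by viewing both sides as meromorphic functions of $z_1$ (with $x$, $q$, and $z_0$ held generic and fixed) and showing that they agree as elliptic functions on $\mathbb{C}^\ast/q^\mathbb{Z}$.

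First I would establish the transformation $j(qz;q)=-z^{-1}j(z;q)$ directly from the series for $j$, together with the symmetry $j(z;q)=j(q/z;q)$ from the product expansion, and use these to verify that both sides of the claimed identity are invariant under $z_1 \mapsto qz_1$. The left-hand side is invariant by the first identity of Proposition \ref{mpro31}. For the right-hand side, a short computation shows that the numerator $j(z_1/z_0;q)\,j(xz_0z_1;q)$ and the denominator $j(z_1;q)\,j(xz_1;q)$ each scale by the same factor $1/(xz_1^2)$, so the quotient is invariant.

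Second, I would match principal parts in a fundamental domain for the $q$-action. There are two classes of potential simple poles: $z_1 = 1/x$ (from the factor $(1-q^{r-1}xz_1)^{-1}$ in the defining Appell-Lerch series on the left, and from $j(xz_1;q)$ on the right), and $z_1 = 1$ (from $j(z_1;q)$ on both sides). Using the local expansion $j(z;q)\sim J_1^3(1-z)$ near $z=1$, the residue match at $z_1=1/x$ comes down to the elementary identity $-xz_0\,j(1/(xz_0);q)=j(xz_0;q)$, which follows from the two transformation laws for $j$ above. The residue match at $z_1=1$ reduces to a classical partial-fraction identity expressing $\sum_r(-1)^rq^{r(r-1)/2}/(1-q^{r-1}x)$ as $-J_1^3/j(x;q)$, which can be verified by the same theta-function toolkit.

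Once the residues agree at every pole in the fundamental domain, the difference of the two sides is a holomorphic elliptic function on $\mathbb{C}^\ast/q^\mathbb{Z}$, hence a constant in $z_1$. Specializing to $z_1=z_0$ forces this constant to vanish: the left-hand side is zero by inspection, while on the right-hand side $j(z_1/z_0;q)=j(1;q)=0$. The main obstacle I anticipate is the residue bookkeeping at $z_1=1$, which needs the partial-fraction identity for $m(x,q,z)$ near a zero of $j(z;q)$; if this is awkward, an alternative endgame is to instead symmetrize in $z_0\leftrightarrow z_1$, use the second identity of Proposition \ref{mpro31}, and invoke Theorem \ref{mthe33}-type Liouville arguments only for the $z_1=1/x$ class of poles, since the $z_1=1$ class can be handled by directly subtracting a counter-term built from the known functional equation of $m$.
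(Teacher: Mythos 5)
The paper does not actually prove this statement: Theorem \ref{mthe33} is imported verbatim from Hickerson and Mortenson \cite[Theorem 3.3]{Hi-Mo-14}, so there is no internal proof to compare against. Judged on its own, your argument is the standard and correct route to this ``changing $z$'' formula for Appell--Lerch sums, and the key computations check out. The quasi-periodicity $j(qz;q)=-z^{-1}j(z;q)$ does make both the numerator and the denominator of the right-hand side scale by $1/(xz_1^2)$ under $z_1\mapsto qz_1$, matching the invariance of the left side from Proposition \ref{mpro31}. At $z_1=1/x$ the residue of $m(x,q,z_1)$ comes from the single term $r=1$ of the defining series and equals $1/(x^2j(1/x;q))$, which matches the right side precisely via $j(1/y;q)=j(qy;q)=-y^{-1}j(y;q)$ with $y=xz_0$, as you indicate. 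At $z_1=1$ the right-hand residue simplifies to $1/j(x;q)$, so the match there is exactly the classical partial-fraction expansion $\sum_{r}(-1)^rq^{r(r-1)/2}/(1-q^{r-1}x)=-J_1^3/j(x;q)$, which you correctly isolate as the one genuine external input; it is standard and provable by the same Liouville technique. With genericity ensuring that $z_1\equiv 1$ and $z_1\equiv 1/x$ are distinct simple poles modulo $q^{\mathbb{Z}}$, the difference of the two sides is holomorphic and $q$-invariant on $\mathbb{C}^*$, hence constant, and setting $z_1=z_0$ kills the constant since $j(1;q)=0$. The only blemish is your final ``alternative endgame'' sentence, which as written is circular (it invokes the very theorem being proved) and muddled; it should simply be deleted, since the main line of argument does not need it.
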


\begin{cor}(\cite[Corollary 3.7]{Hi-Mo-14})
\label{mcor37}
For generic $x,z\in \mathbb{C^*}$, we have
$$
m(x,q,z)=m(-qx^2,q^4,z^4)-\frac{x}{q}m(-x^2/q,q^4,q^4)-\frac{J_2J_4j(-xz^2;q)j(-xz^3;q)}{xj(xz;q)j(z^4;q^4)j(-qx^2z^4;q^2)}.
$$
\end{cor}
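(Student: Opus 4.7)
The plan is to prove the identity by showing that both sides agree as meromorphic functions of $z$. Introduce
\[
G(z):=m(x,q,z)-m(-qx^2,q^4,z^4)+\frac{x}{q}\,m(-x^2/q,q^4,q^4),
\]
\[
T(z):=-\frac{J_2J_4\,j(-xz^2;q)\,j(-xz^3;q)}{x\,j(xz;q)\,j(z^4;q^4)\,j(-qx^2z^4;q^2)},
\]
so that the goal is to verify $G(z)=T(z)$. The third summand of $G$ is independent of $z$ and plays the role of a normalization constant chosen so that the two $m$-terms can be matched at a specific $z$-value.

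First, I would apply Theorem \ref{mthe33} twice to compute $G(z)-G(w)$ for a generic reference $w$: once at level $q$ applied to $m(x,q,z)$, and once at level $q^4$ applied to $m(-qx^2,q^4,z^4)$ (the latter is obtained from Theorem \ref{mthe33} by the substitution $q\mapsto q^4$, $x\mapsto -qx^2$, $z\mapsto z^4$). The $z$-independent third term drops out. The resulting expression is an explicit difference of two theta quotients of mixed level $q$ and $q^4$.

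Second, I would check that $T(z)-T(w)$ equals the same combination. The most efficient route is to verify that $T(z)$, as a meromorphic function of $z$, has the same set of simple poles and the same quasi-periodicity as $G(z)$: using the standard identities $j(qx;q)=-x^{-1}j(x;q)$ and $j(x^{-1};q)=-x^{-1}j(x;q)$, together with the Jacobi triple product and addition formulas that express $j(x;q)j(y;q)$ in terms of level-$q^2$ theta functions, one can reduce the desired equality of $z$-transformations to an identity of finite theta products, which is routine. Once $G(z)-T(z)$ is shown to be $z$-independent, it is a constant in $z$ that one evaluates at any convenient value.

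Third, I would choose a specialization of $z$ that collapses the two $m$-terms in $G$ to the same Appell-Lerch. A natural choice is to let $z^4$ tend to a point of the lattice $q^{4\Z}$, bringing $m(-qx^2,q^4,z^4)$ to $m(-qx^2,q^4,q^4)$; then applying Proposition \ref{mpro31} repeatedly relates $m(-qx^2,q^4,q^4)$ to $m(-x^2/q,q^4,q^4)$, and the $m(x,q,z)$ term can be evaluated by another application of Proposition \ref{mpro31}. On the $T$-side, the theta quotient at this $z$ simplifies to a ratio that can be recognized through the Jacobi triple product. The residual equality is then a finite identity in theta functions that can be verified directly. The main obstacle is the theta-function bookkeeping in the second step; in particular, showing that the specific product $j(-xz^2;q)\,j(-xz^3;q)$ in the numerator of $T(z)$ is \emph{precisely} the combination needed to reproduce the Theorem \ref{mthe33} outputs from the two $m$-terms in $G$. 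Everything else is mechanical once that matching is in place.
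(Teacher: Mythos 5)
You should first be aware that the paper contains no proof of this statement at all: it is imported verbatim from \cite[Corollary 3.7]{Hi-Mo-14} and simply cited, so there is no internal argument to compare yours against. Judged on its own terms, your overall strategy --- show the difference of the two sides is invariant under $z\mapsto qz$ and pole-free, conclude it is constant by Liouville, then evaluate the constant --- is the correct and standard one for Appell--Lerch identities of this shape. But two of your steps have concrete problems. First, the evaluation point you choose, $z^4\to q^{4\mathbb{Z}}$, is a common pole of both sides: $j(z^4;q^4)$ sits in the denominator of the theta quotient, and $m(-qx^2,q^4,z^4)$ blows up as its last argument approaches the lattice $q^{4\mathbb{Z}}$ (by Theorem \ref{mthe33} these are genuine, non-removable poles). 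The constant cannot be read off there without a residue computation that is as hard as the rest of the proof; the standard choice is a zero of the theta correction, e.g.\ $z$ with $-xz^2\in q^{\mathbb{Z}}$, after which one must still verify that the $m$-side vanishes. Second, the step you label ``routine'' --- matching the residues of $m(x,q,z)$ at $z\in q^{\mathbb{Z}}\cup x^{-1}q^{\mathbb{Z}}$ and of $m(-qx^2,q^4,z^4)$ at $z^4\in q^{4\mathbb{Z}}\cup(-qx^2)^{-1}q^{4\mathbb{Z}}$ against the poles of the theta quotient at $xz\in q^{\mathbb{Z}}$, $z^4\in q^{4\mathbb{Z}}$ and $x^2z^4\in-q^{2\mathbb{Z}+1}$ --- is the entire content of the identity, including the verification that the extra pole families of the theta quotient are cancelled by zeros of its numerator. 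Deferring all of that leaves a skeleton rather than a proof.

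There is also an issue with the statement itself that a careful attempt should have caught. As transcribed, the term $m(-x^2/q,q^4,q^4)$ is ill-defined: its definition carries $j(q^4;q^4)=(q^4;q^4)_\infty(1;q^4)_\infty(q^4;q^4)_\infty=0$ in the denominator while the accompanying sum does not vanish, so this ``$z$-independent normalization constant'' is actually singular. Comparing with how the corollary is invoked in the proof of Lemma \ref{8lem1}, where the second $m$-term appears with a $z$-dependent third argument, strongly suggests that the third argument should involve $z$ (e.g.\ $q^4z^4$ or $z^{-4}$) rather than being the fixed lattice point $q^4$. Since your decomposition into $G(z)$ and $T(z)$ is premised on that term being a finite constant, you would need to pin down the correct form of the identity before the periodicity and pole analysis can even be set up.
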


\subsection{Garvan's MAPLE package}

Define the usual Atkin $U_p$ operator which acts on a formal power series
$$
f(q)=\sum_{n\in \mathbb{Z}}a(n)q^n,
$$
by
$$
U_p(f(q))=\sum_{n\in \mathbb{Z}}a(pn)q^n.
$$
We will use Garvan's MAPLE programs to prove identities in this paper. The programs rely on the theory of modular functions. For details, see \cite[Section 2]{Ch-Ch-Ga}. Identities which have only eta-quotients (or with $U_p$ operator) can be proved by ETA-package algorithmically, see
\beq
\label{eta}
\text{https://qseries.org/fgarvan/qmaple/ETA/}
\eeq

\subsection{Derivative of theta functions}

We also need the derivative of certain theta functions.
\begin{lemma}We have
\label{pthe1}
\beq
\label{pj21}
\frac{d}{dz}j(\pm zq;q^2)\big|_{z=1}=0.
\eeq
\end{lemma}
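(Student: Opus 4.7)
The plan is to use the Jacobi triple product form of $j(x;q)$ stated at the start of Section 2 and then exploit the $n \leftrightarrow -n$ symmetry of the resulting bilateral series.

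First I would specialize the identity
\[
j(x;q) = \sum_{n=-\infty}^{\infty} (-1)^n q^{n(n-1)/2} x^n
\]
by replacing $q$ with $q^2$ and setting $x = \pm zq$. A quick simplification of the exponents gives the two clean forms
\[
j(zq;q^2) = \sum_{n=-\infty}^{\infty}(-1)^n q^{n^2} z^n, \qquad j(-zq;q^2) = \sum_{n=-\infty}^{\infty} q^{n^2} z^n,
\]
the latter after absorbing $(-1)^n$ into $(-z)^n$.

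Next I would differentiate termwise in $z$ (legitimate since $|q|<1$ makes the series absolutely convergent in a neighborhood of $z=1$) and evaluate at $z=1$ to obtain
\[
\frac{d}{dz}j(zq;q^2)\Big|_{z=1} = \sum_{n=-\infty}^{\infty} (-1)^n n\, q^{n^2}, \qquad \frac{d}{dz}j(-zq;q^2)\Big|_{z=1} = \sum_{n=-\infty}^{\infty} n\, q^{n^2}.
\]

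The closing step is the symmetry argument: the substitution $n \mapsto -n$ preserves $q^{n^2}$ and $(-1)^n$ while flipping the sign of $n$, so each bilateral sum equals its own negative and therefore vanishes. I do not anticipate any genuine obstacle here; the only care needed is the bookkeeping of the exponent $2\cdot n(n-1)/2 + n = n^2$ inside the triple product, and the justification of termwise differentiation, both of which are routine.
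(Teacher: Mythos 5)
Your proof is correct and follows essentially the same route as the paper: expand $j(\pm zq;q^2)$ via the triple-product series, differentiate termwise, and kill the resulting bilateral sums by the $n\mapsto -n$ symmetry. The only cosmetic difference is that the paper deduces the $-zq$ case from the $+zq$ case by replacing $q$ with $-q$, whereas you expand it directly; both are immediate.
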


\begin{proof}
By
$$
j(zq;q^2)=\sum_{n=-\infty}^{\infty}(-1)^nq^{n^2}z^n,
$$
we have
$$
\frac{d}{dz}j(zq;q^2)\big|_{z=1}=\sum_{n=-\infty}^{\infty}(-1)^nnq^{n^2}=0,
$$
and replace $q$ by $-q$
$$
\frac{d}{dz}j(-zq;q^2)\big|_{z=1}=\sum_{n=-\infty}^{\infty}nq^{n^2}=0.
$$
\end{proof}

Using the results by Lemke Oliver \cite[Theorem 1.1]{Ol-13}, we can prove more identities like \eqref{pj21}.

\begin{lemma}We have
\label{pthe2}
\begin{align}
\label{pj31}
\frac{d}{dz}\left(\frac{1}{z}j(z^6q;q^3)\right)\bigg|_{z=1}&=-\frac{J_1^4}{J_3}-9q\frac{J_9^3J_1}{J_3},\\
\label{pj31m}
\frac{d}{dz}\left(\frac{1}{z}j(-z^6q;q^3)\right)\bigg|_{z=1}&=-\frac{J_1^5}{J_2^2}.
\end{align}
\end{lemma}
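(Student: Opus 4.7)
The plan is to adapt the series-and-differentiate technique used in the proof of Lemma~\ref{pthe1}. Expanding the Jacobi triple product $j(\pm z^6q;q^3)=J_3\,(\pm z^6q;q^3)_\infty(\pm q^2/z^6;q^3)_\infty$ and reading off the coefficient of $z^{6n-1}$ shows that the two quantities on the left-hand side of Lemma~\ref{pthe2} are the weight $3/2$ Hecke-Rogers-type series
$$\sum_{n\in\mathbb{Z}}(-1)^n(6n-1)q^{n(3n-1)/2}\quad\text{and}\quad\sum_{n\in\mathbb{Z}}(6n-1)q^{n(3n-1)/2},$$
respectively. The task is therefore to identify each of these sums with the eta-quotient on the right.

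For \eqref{pj31} I would evaluate the derivative via the logarithmic derivative of the product form, which produces $J_1$ times the Lambert series
$$-1-6\sum_{k\geq 0}\frac{q^{3k+1}}{1-q^{3k+1}}+6\sum_{k\geq 0}\frac{q^{3k+2}}{1-q^{3k+2}}.$$
Up to an overall sign this is the Borwein cubic theta function $a(q)=\sum_{m,n\in\mathbb{Z}}q^{m^2+mn+n^2}$ in its Eisenstein-series guise attached to the non-trivial character modulo $3$. The classical eta-quotient identity $a(q)=J_1^3/J_3+9q\,J_9^3/J_3$ then delivers \eqref{pj31} after multiplying by $-J_1$.

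For \eqref{pj31m} the same strategy yields $j(-q;q^3)=J_2J_3^2/(J_1J_6)$ times a Lambert series whose denominators are now of the form $1+q^{3k+j}$. Using the elementary identity $q^n/(1+q^n)=q^n/(1-q^n)-2q^{2n}/(1-q^{2n})$ collapses this series into the combination $a(q)-2a(q^2)$, so \eqref{pj31m} becomes equivalent to the eta-quotient identity
$$2a(q^2)-a(q)=\frac{J_1^6J_6}{J_2^3J_3^2}.$$
Since both sides are weight-one modular forms on a common congruence subgroup, this can be proved algorithmically by Garvan's ETA-package \eqref{eta} after checking sufficiently many initial coefficients. Alternatively, one may invoke \cite[Theorem~1.1]{Ol-13} directly on the original theta series, which supplies eta-quotient formulas for precisely such weight-$3/2$ sums $\sum_n (An+B)\chi(n)q^{Cn^2+Dn}$.

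The main obstacle is the eta-quotient identity $2a(q^2)-a(q)=J_1^6J_6/(J_2^3J_3^2)$ together with the preliminary step of recognising the alternating-sign Lambert series as a combination of $a(q)$ and $a(q^2)$. The first identity \eqref{pj31} reduces at once to the classical cubic Borwein formula, but \eqref{pj31m} needs this additional non-standard eta-quotient identity, whose cleanest verification is via modular forms (ETA-package) or Lemke Oliver's classification of weight-$3/2$ unary theta series as eta-quotients.
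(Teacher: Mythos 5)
Your argument is correct, but it takes a genuinely different route from the paper's. The paper differentiates the theta series termwise (exactly as in Lemma \ref{pthe1}), reducing both identities to evaluating the unary theta sums $\sum_{n}(\pm1)^n(6n-1)q^{n(3n-1)/2}$; it then recognizes the alternating sum as $-U_3(J_1^3)$ via Jacobi's identity $J_1^3=\sum_{n\ge1}(-1)^{n-1}(2n-1)q^{n(n-1)/2}$, verifies the resulting eta-quotient for $U_3(J_1^3)$ with the ETA package \eqref{eta}, and obtains the non-alternating sum directly from \cite[Theorem 1.1]{Ol-13}. You instead logarithmically differentiate the product form, turning the two derivatives into $j(q;q^3)=J_1$ (resp.\ $j(-q;q^3)=J_2J_3^2/(J_1J_6)$) times Lambert series that you identify with $-a(q)$ (resp.\ $a(q)-2a(q^2)$) for the Borwein cubic theta function, and you then invoke the classical identity $a(q)=J_1^3/J_3+9qJ_9^3/J_3$ together with the eta-quotient evaluation $2a(q^2)-a(q)=J_1^6J_6/(J_2^3J_3^2)$. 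I checked your intermediate steps (the sign bookkeeping in the Lambert series, the value $j(-q;q^3)=J_2J_3^2/(J_1J_6)$, and both cubic identities to several coefficients), and the proof goes through. As for what each approach buys: the paper's is more uniform, since \cite[Theorem 1.1]{Ol-13} handles both sums by the same mechanism and only one algorithmic verification is needed; yours avoids appealing to the classification of unary theta series for \eqref{pj31} and exposes the connection to $a(q)$, but it trades the paper's single MAPLE check for two cubic-theta identities, one of which ($2a(q^2)-a(q)$ as an eta-quotient) is nonstandard and itself still requires a modular-forms or ETA-package verification, so the total reliance on algorithmic checking is comparable. Your closing alternative --- applying \cite[Theorem 1.1]{Ol-13} directly to the theta series --- is precisely what the paper does for \eqref{pj31m}. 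One terminological nit: these single sums are unary theta series of weight $3/2$, not Hecke--Rogers (double) series.
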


\begin{proof}
Similar to Lemma \ref{pthe1}
$$
\frac{d}{dz}\left(\frac{1}{z}j(z^6q;q^3)\right)\bigg|_{z=1}=\sum_{n=-\infty}^{\infty}(-1)^n(6n-1)q^{n(3n-1)/2}.
$$
By the result of Lemke Oliver \cite[Theorem 1.1]{Ol-13}, we have
$$
J_1^3=\sum_{n=1}^{\infty}(-1)^{n-1}(2n-1)q^{n(n-1)/2}.
$$
Hence by 3-dissection of $J_1^3$
$$
U_3(J_1^3)=\sum_{n=-\infty}^{\infty}(-1)^{n-1}(6n-1)q^{n(3n-1)/2}.
$$
Then
$$
\frac{d}{dz}\left(\frac{1}{z}j(z^6q;q^3)\right)\bigg|_{z=1}=-U_3(J_1^3)=-\frac{J_1^4}{J_3}-9q\frac{J_9^3J_1}{J_3},
$$
where the last equation was verified by MAPLE \eqref{eta}. Also by \cite[Theorem 1.1]{Ol-13}
$$
\frac{d}{dz}\left(\frac{1}{z}j(-z^6q;q^3)\right)\bigg|_{z=1}=\sum_{n=-\infty}^{\infty}(6n-1)q^{n(3n-1)/2}=-\frac{J_1^5}{J_2^2}.
$$
\end{proof}

The following two lemmas can also be proved by \cite[Theorems 1.1 - 1.2]{Ol-13}. Since the proofs are similar to \ref{pthe2}, we omit the proofs.

\begin{lemma}We have
\begin{align}
\label{pj41}
\frac{d}{dz}\left(\frac{1}{z}j(z^4q;q^4)\right)\bigg|_{z=1}&=-\frac{J_2^9}{J_4^3J_1^3},\\
\label{pj41m}
\frac{d}{dz}\left(\frac{1}{z}j(-z^4q;q^4)\right)\bigg|_{z=1}&=-J_1^3.
\end{align}
\end{lemma}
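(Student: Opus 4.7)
The plan is to mirror the strategy of Lemma~\ref{pthe2}. First I would apply the Jacobi triple product to the two theta functions $j(\pm z^4q;q^4)$, yielding the term-wise expansion
$$\frac{1}{z}\,j(\pm z^4q;q^4)=\sum_{n\in\mathbb{Z}}(\mp 1)^n q^{2n^2-n}\,z^{4n-1}.$$
Differentiating in $z$ termwise and evaluating at $z=1$ then reduces each claim to an explicit Hecke-type series:
\begin{align*}
\frac{d}{dz}\left(\frac{1}{z}j(z^4q;q^4)\right)\bigg|_{z=1}&=\sum_{n\in\mathbb{Z}}(-1)^n(4n-1)q^{2n^2-n},\\
\frac{d}{dz}\left(\frac{1}{z}j(-z^4q;q^4)\right)\bigg|_{z=1}&=\sum_{n\in\mathbb{Z}}(4n-1)q^{2n^2-n}.
\end{align*}

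For \eqref{pj41m}, the second series is already within the reach of \cite[Theorem~1.1]{Ol-13}: Jacobi's identity $J_1^3=\sum_{k\geq 0}(-1)^k(2k+1)q^{k(k+1)/2}$ splits into the odd values $k=2n-1$ ($n\geq 1$) and the even values $k=-2n$ ($n\leq 0$), and these two pieces recombine to $-\sum_{n\in\mathbb{Z}}(4n-1)q^{2n^2-n}$, giving $-J_1^3$ at one stroke.

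For \eqref{pj41} the alternating series $\sum_{n\in\mathbb{Z}}(-1)^n(4n-1)q^{2n^2-n}$ is more delicate, since it is not literally one of the Hecke-type sums tabulated in \cite{Ol-13}. My plan is to recover it as a $U_2$-dissection of a suitable weight-$3/2$ eta-power (for instance, an appropriate $q\mapsto -q$ twist of the $J_1^3$ identity), in direct analogy with the $U_3(J_1^3)$ dissection that produced \eqref{pj31}. The resulting eta-quotient equality with $-J_2^9/(J_4^3J_1^3)$ is then checked algorithmically by Garvan's MAPLE ETA-package \eqref{eta}. The main obstacle is precisely this last step: one has to isolate the correct Lemke Oliver identity carrying the sign $(-1)^n$ and line it up with the $U_2$-dissection so that matching coefficients collapses the problem to a single verifiable eta-quotient equality. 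Once that setup is in place, the MAPLE check is routine, as it was in the final line of the proof of Lemma~\ref{pthe2}.
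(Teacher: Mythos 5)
Your plan is essentially the paper's own (the paper omits the proof, saying only that it parallels Lemma \ref{pthe2} and uses \cite{Ol-13}): expand $z^{-1}j(\pm z^4q;q^4)$ by the triple product, differentiate termwise, and identify the resulting series. Your expansions are correct, and your treatment of \eqref{pj41m} --- splitting Jacobi's $J_1^3=\sum_{k\ge0}(-1)^k(2k+1)q^{k(k+1)/2}$ into $k=2n-1$ and $k=-2n$ to get $J_1^3=-\sum_{n\in\mathbb{Z}}(4n-1)q^{2n^2-n}$ --- is complete. The one thing to flag is that the ``main obstacle'' you identify for \eqref{pj41} does not exist: since $2n^2-n=n(2n-1)$ has the same parity as $n$, we have $(-1)^n=(-1)^{2n^2-n}$, so
\begin{equation*}
\sum_{n\in\mathbb{Z}}(-1)^n(4n-1)q^{2n^2-n}=\sum_{n\in\mathbb{Z}}(4n-1)(-q)^{2n^2-n}=-\left((-q;-q)_\infty\right)^3,
\end{equation*}
i.e.\ the alternating series is literally the series of \eqref{pj41m} with $q\mapsto -q$. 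Combined with $(-q;-q)_\infty=J_2^3/(J_1J_4)$ this yields $-J_2^9/(J_4^3J_1^3)$ at once; no $U_2$-dissection, no separate Lemke Oliver identity, and no MAPLE verification via \eqref{eta} is required. With that simplification your argument closes both identities.
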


\begin{lemma}We have
\begin{align}
\label{pj61}
\frac{d}{dz}\left(\frac{1}{z}j(z^3q;q^6)\right)\bigg|_{z=1}&=-\frac{J_2^5}{J_1^2},\\
\label{pj61m}
\frac{d}{dz}\left(\frac{1}{z}j(-z^3q;q^6)\right)\bigg|_{z=1}&=-\frac{J_4^2J_1^2}{J_2}.
\end{align}
\end{lemma}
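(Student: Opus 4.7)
The plan is to mirror the proof of Lemma~\ref{pthe2}. First, using the series expansion $j(x;q^6) = \sum_{n\in\Z}(-1)^n q^{3n(n-1)} x^n$, I would write
$$j(\pm z^3 q;q^6) \;=\; \sum_{n\in\Z}(\mp 1)^n q^{n(3n-2)}z^{3n},$$
then divide by $z$, differentiate term-by-term, and set $z=1$ to obtain
$$\frac{d}{dz}\!\left(\frac{1}{z}j(\pm z^3 q;q^6)\right)\bigg|_{z=1} \;=\; \sum_{n\in\Z}(\mp 1)^n(3n-1)\,q^{n(3n-2)}.$$

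Second, I would invoke the Lemke Oliver identities (Theorems~1.1 and 1.2 of \cite{Ol-13}), exactly as in the proof of Lemma~\ref{pthe2}, to rewrite each of these weighted theta-series as a (possibly dissected) eta-quotient. Concretely, one starts from a Jacobi-type identity of the shape $\sum(\pm 1)^n(An+B)q^{cn(n-1)/2}$ and applies a suitable $U_d$-operator so that the surviving exponents become $n(3n-2)$; matching coefficients pins down the inner sum. This is the analogue of the use of $U_3(J_1^3)$ in Lemma~\ref{pthe2}.

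Third, the identification of the resulting dissected theta-series with the claimed eta-quotient, namely $-J_2^5/J_1^2$ for the $(+)$-case and $-J_4^2 J_1^2/J_2$ for the $(-)$-case, is an equality of eta-quotients on a fixed congruence subgroup. I would discharge this last step using Garvan's ETA-package \eqref{eta}, precisely as the authors do at the end of the proof of Lemma~\ref{pthe2}.

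The main obstacle is the bookkeeping in the second step: choosing the correct Lemke Oliver base identity and the correct $U_d$-operator so that the dissection produces exactly the sum $\sum(\mp 1)^n(3n-1)q^{n(3n-2)}$ with the right signs and weights. Once that matching is in place, the passage to $-J_2^5/J_1^2$ (respectively $-J_4^2J_1^2/J_2$) is an automatic modular-function verification.
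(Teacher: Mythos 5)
Your proposal is correct and follows exactly the route the paper intends: the paper omits this proof, stating only that it follows from Lemke Oliver's Theorems 1.1--1.2 in the same way as Lemma \ref{pthe2}, and your plan (termwise differentiation giving $\sum_{n}(\mp1)^n(3n-1)q^{n(3n-2)}$, identification via a dissection of a Lemke Oliver theta identity, and a final eta-quotient check with the ETA-package) is precisely that argument. The series you obtain in the first step is correct and matches the claimed eta-quotients $-J_2^5/J_1^2$ and $-J_4^2J_1^2/J_2$.
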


The case for $q^{12}$ may be slightly more complicated.

\begin{lemma}We have
\begin{align}
\label{pj125m}
\frac{d}{dz}\left(\frac{1}{z}j(-z^{12}q^5;q^{12})\right)\bigg|_{z=1}&=-\frac{1}{2}\left(\frac{J_1^4}{J_3}+\frac{9qJ_9^3J_1}{J_3}+\frac{J_1^5}{J_2^2}\right),\\
\label{pj121m}
\frac{d}{dz}\left(\frac{1}{z^5}j(-z^{12}q;q^{12})\right)\bigg|_{z=1}&=-\frac{1}{2}\left(\frac{J_1^4}{qJ_3}+\frac{9J_9^3J_1}{J_3}-\frac{J_1^5}{qJ_2^2}\right).
\end{align}
\end{lemma}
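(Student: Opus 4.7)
The plan is to mimic the template used for Lemma \ref{pthe2}: write the theta function as a sum over $n\in\mathbb{Z}$, differentiate in $z$, evaluate at $z=1$, and match the resulting Hecke-type series against the evaluations of $\sum_m(6m-1)q^{m(3m-1)/2}$ already available through \cite[Theorem~1.1]{Ol-13}. The $q^{12}$ case is only ``slightly more complicated'' because the sums that appear turn out to be the two halves of a $2$-dissection of those known sums, rather than the sums themselves. Concretely, $(-1)^n(-z^{12}q^k)^n=z^{12n}q^{kn}$, so the series definition of $j$ collapses to
\begin{equation*}
j(-z^{12}q^5;q^{12})=\sum_{n\in\mathbb{Z}}z^{12n}q^{6n^2-n},\qquad j(-z^{12}q;q^{12})=\sum_{n\in\mathbb{Z}}z^{12n}q^{6n^2-5n},
\end{equation*}
and a termwise differentiation of $z^{-1}j(-z^{12}q^5;q^{12})$ and $z^{-5}j(-z^{12}q;q^{12})$ followed by setting $z=1$ gives the LHS of \eqref{pj125m} and \eqref{pj121m} as
\begin{equation*}
\sum_{n\in\mathbb{Z}}(12n-1)q^{6n^2-n}\qquad\text{and}\qquad\sum_{n\in\mathbb{Z}}(12n-5)q^{6n^2-5n}
\end{equation*}
respectively.

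Next I would set $F(q):=\sum_{m\in\mathbb{Z}}(6m-1)q^{m(3m-1)/2}$ and $F^{\ast}(q):=\sum_{m\in\mathbb{Z}}(-1)^m(6m-1)q^{m(3m-1)/2}$, which by the proof of Lemma \ref{pthe2} equal $-J_1^5/J_2^2$ and $-J_1^4/J_3-9qJ_9^3J_1/J_3$. The substitution $m=2n$ identifies the even part $(F+F^{\ast})/2$ with $\sum_n(12n-1)q^{6n^2-n}$, which after inserting the eta-quotient evaluations yields \eqref{pj125m} immediately. For \eqref{pj121m} I would use $m=1-2n$, which bijects $\mathbb{Z}$ onto the odd integers and gives $(6m-1)q^{m(3m-1)/2}=(5-12n)q^{6n^2-5n+1}$; hence the odd part $(F-F^{\ast})/2$ equals $-q\sum_n(12n-5)q^{6n^2-5n}$, and dividing by $-q$ produces the claimed identity.

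The only real obstacle is careful bookkeeping: tracking the constant shift $q^1$ that drops out when $m(3m-1)/2$ with $m=1-2n$ is expanded as $6n^2-5n+1$, and the accompanying sign flip on the linear coefficient coming from the reversal of the summation index. Once these are in place, the two identities reduce cleanly to the eta-quotient evaluations of $F$ and $F^{\ast}$ that have already been carried out inside the proof of Lemma \ref{pthe2}.
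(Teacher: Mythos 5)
Your proof is correct and is essentially the paper's argument: both reduce the two $q^{12}$ derivatives to the known $q^{3}$ evaluations \eqref{pj31} and \eqref{pj31m} by a parity ($2$-)dissection and then solve the resulting $2\times 2$ linear system. The only cosmetic difference is that the paper performs the dissection before differentiating, at the level of the theta functions themselves via $f(z)\pm qg(1/z)=\tfrac{1}{z}j(\mp z^{6}q;q^{3})$, whereas you dissect the differentiated series; your bookkeeping of the extra factor $q$ and the sign flip on the linear coefficient in the odd part is handled correctly.
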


\begin{proof}
Let
$$
f(z):=\frac{1}{z}j(-z^{12}q^5;q^{12}),
$$
and
$$
g(z):=\frac{1}{z^5}j(-z^{12}q;q^{12}).
$$
Then it is easy to check that
$$
f(z)+qg(1/z)=\frac{1}{z}j(-z^6q;q^3),
$$
and
$$
f(z)-qg(1/z)=\frac{1}{z}j(z^6q;q^3).
$$
 By \eqref{pj31} and \eqref{pj31m}, we have
$$
f'(1)-qg'(1)=-\frac{J_1^5}{J_2^2},
$$
and
$$
f'(1)+qg'(1)=-\frac{J_1^4}{J_3}-\frac{9qJ_9^3J_1}{J_3}.
$$
Hence the lemma holds.
\end{proof}

Replacing $q$ by $-q$ in \eqref{pj125m} and \eqref{pj121m}, we have

\begin{lemma}
\begin{align}
\label{pj125}
\frac{d}{dz}\left(\frac{1}{z}j(z^{12}q^5;q^{12})\right)\bigg|_{z=1}&=-\frac{1}{2}\left(\frac{J_{12}J_3J_2^{12}}{J_6^3J_4^4J_1^4}-\frac{9qJ_{18}^9J_{12}J_3J_2^3}{J_{36}^3J_9^3J_6^3J_4J_1}+\frac{J_2^{13}}{J_4^5J_1^5}\right),\\
\label{pj121}
\frac{d}{dz}\left(\frac{1}{z^5}j(z^{12}q;q^{12})\right)\bigg|_{z=1}&=\frac{1}{2}\left(\frac{J_{12}J_3J_2^{12}}{qJ_6^3J_4^4J_1^4}-\frac{9J_{18}^9J_{12}J_3J_2^3}{J_{36}^3J_9^3J_6^3J_4J_1}-\frac{J_2^{13}}{qJ_4^5J_1^5}\right).
\end{align}
\end{lemma}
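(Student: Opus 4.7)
The plan is to derive both identities by the substitution $q\mapsto -q$ in the previous lemma, that is, in equations \eqref{pj125m} and \eqref{pj121m}. The point is that on the left-hand side, under $q\mapsto -q$ one has $(-q)^{12}=q^{12}$ and $(-q)^5=-q^5$ (resp. $(-q)^1=-q$), so the minus sign inside $j(-z^{12}q^5;q^{12})$ (resp. $j(-z^{12}q;q^{12})$) is absorbed and we recover precisely the theta quotients $\frac{1}{z}j(z^{12}q^5;q^{12})$ and $\frac{1}{z^5}j(z^{12}q;q^{12})$ appearing in \eqref{pj125} and \eqref{pj121}. Since the differentiation is in the external variable $z$, this transformation commutes with $\tfrac{d}{dz}\big|_{z=1}$.

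Next I would rewrite the right-hand side of \eqref{pj125m} and \eqref{pj121m} after $q\mapsto -q$ using the standard eta-quotient transformations
\[
J_1(-q)=\frac{J_2^3}{J_1J_4},\qquad J_3(-q)=\frac{J_6^3}{J_3J_{12}},\qquad J_9(-q)=\frac{J_{18}^3}{J_9J_{36}},
\]
together with $J_{2k}(-q)=J_{2k}$ for even indices, which follow from $(-q;q^2)_\infty=J_2^2/(J_1J_4)$ and the splitting $(-q;-q)_\infty=(-q;q^2)_\infty(q^2;q^2)_\infty$. Inserting these into the three summands of the right-hand side of \eqref{pj125m} produces respectively
\[
\frac{J_1(-q)^4}{J_3(-q)}=\frac{J_{12}J_3J_2^{12}}{J_6^3J_4^4J_1^4},\quad
\frac{9(-q)J_9(-q)^3J_1(-q)}{J_3(-q)}=-\frac{9qJ_{18}^9J_{12}J_3J_2^3}{J_{36}^3J_9^3J_6^3J_4J_1},\quad
\frac{J_1(-q)^5}{J_2(-q)^2}=\frac{J_2^{13}}{J_4^5J_1^5},
\]
which, after inserting the overall factor $-\tfrac12$, gives exactly \eqref{pj125}. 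The analogous substitution into \eqref{pj121m} produces the three terms with extra factors of $1/(-q)=-1/q$ in the first and third summands, whence the overall $-\tfrac12$ flips to $+\tfrac12$ on those terms and yields \eqref{pj121}.

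No step requires any new idea beyond the previous lemma: the whole proof is a bookkeeping computation in which one must correctly distribute the signs from $(-q)^{\text{odd}}=-q^{\text{odd}}$ and from $J_k(-q)$ versus $J_k(q)$. The main (and only) obstacle is therefore keeping track of signs in the three summands of \eqref{pj121m}, where the factor $1/q$ in two of the three terms interacts with the sign flip to give the pattern $+,-,-$ inside the bracket of \eqref{pj121}. Since the argument is purely a direct verification, I would simply present it as: "Replacing $q$ by $-q$ in \eqref{pj125m} and \eqref{pj121m}, and using the standard transformations for $J_k(-q)$ listed above, yields \eqref{pj125} and \eqref{pj121}."
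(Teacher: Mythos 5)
Your proposal is correct and is exactly the paper's argument: the authors prove this lemma by the single remark "Replacing $q$ by $-q$ in \eqref{pj125m} and \eqref{pj121m}," and your sign bookkeeping with $J_1(-q)=J_2^3/(J_1J_4)$, $J_3(-q)=J_6^3/(J_3J_{12})$, $J_9(-q)=J_{18}^3/(J_9J_{36})$ correctly reproduces both right-hand sides, including the $+,-,-$ pattern in \eqref{pj121}.
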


Finally, it is easy to see that
\beq
\label{pj1}
\frac{d}{dz}\left(\frac{1}{z}j(-z^2;q)\right)\bigg|_{z=1}=0.
\eeq

\section{Hecke-Rogers series}

\subsection{Hecke-Rogers series of $\mathscr{F}_{4,-1}(q)$}

Following Alfes, Bringmann and Lovejoy \cite{Al-Br-Lo-11},  we define
$$
F_4(z,q):=-N^o(1,-1/q,-z,-q)=\sum_{n=0}^{\infty}\frac{(-1)^n(q;-q)_{2n}q^{n+1}}{(zq,q/z;q^2)_{n+1}}.
$$
It showed that $F_4(1,q)=\mathscr{F}_{4,-1}(q)$ and $F_4(i,-q)=-V_1(q)$ in \cite{Al-Br-Lo-11}. Mortenson \cite[Eq. (2.12)]{Mo-14} also studied this function and showed the Appell-Lerch series
\beq
\label{4mid}
\left(1-\frac{1}{z}\right)F_4(z,q)=m(-z,q^2,-q).
\eeq
The following theorem is equivalent to \eqref{4mid}.

\begin{theorem}
\label{4th1}
For $z\in \mathbb{C^*}$ and $z\neq 1$, we have
\begin{align}
\label{4th1id}
(zq,q/z,q^2;q^2)_\infty F_4(z,q)=&\frac{1}{1-z}\sum_{k=-\infty}^{\infty}\frac{(-1)^{k-1}q^{k^2}}{1+q^{2k-1}}z^{1-k}=-\frac{1}{1-z}\sum_{k=-\infty}^{\infty}\frac{(-1)^{k-1}q^{k^2}}{1+q^{2k-1}}z^k\\
\nonumber
=&\sum_{k=1}^{\infty}\frac{(-1)^{k-1}q^{k^2}}{1+q^{2k-1}}\cdot \frac{z^{1-k}-z^k}{1-z}
\end{align}
\end{theorem}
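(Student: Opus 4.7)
The plan is to deduce the theorem directly from Mortenson's Appell-Lerch formula \eqref{4mid}. Unpacking the definition of $m(-z,q^2,-q)$ gives
\begin{equation*}
m(-z,q^2,-q) = \frac{1}{j(-q;q^2)}\sum_{r\in\Z}\frac{q^{r^2}}{1-zq^{2r-1}},
\end{equation*}
so after multiplying \eqref{4mid} through by $(zq,q/z,q^2;q^2)_\infty = j(zq;q^2)$ and $z/(z-1)$, the theorem reduces to proving the $z$-identity
\begin{equation*}
h(z):=\frac{j(zq;q^2)}{j(-q;q^2)}\sum_{r\in\Z}\frac{q^{r^2}}{1-zq^{2r-1}} = \sum_{k\in\Z}\frac{(-1)^k q^{k^2}}{1+q^{2k-1}}\,z^{-k}=:g(z).
\end{equation*}
Both $h$ and $g$ are entire on $\C^{*}$: the simple zeros of $j(zq;q^2)$ at $z=q^{1-2r}$ cancel the simple poles of $T(z):=\sum_{r}q^{r^2}/(1-zq^{2r-1})$, and $g$ is manifestly a globally convergent Laurent series.

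The main step is to verify that $g$ and $h$ satisfy the same first-order $q^2$-difference equation in $z$. For $T$, I will shift $r\mapsto r+1$ in $T(q^2z)$, then apply the partial fraction $q^{-2r}/(1-zq^{2r-1}) = q^{-2r} + zq^{-1}/(1-zq^{2r-1})$ together with the classical evaluation $\sum_{s}q^{s^2}=j(-q;q^2)$ to obtain $T(q^2 z) = j(-q;q^2) + zT(z)$. Combined with the theta transformation $j(zq^3;q^2) = -(zq)^{-1}j(zq;q^2)$, this produces
\begin{equation*}
h(q^2 z) + \frac{h(z)}{q} = -\frac{j(zq;q^2)}{qz}.
\end{equation*}
For $g$, the computation of $g(q^2 z)+g(z)/q$ collapses via the telescoping factorization $q^{-2k}(1+q^{2k-1})=q^{-2k}+q^{-1}$ to the plain theta sum $\sum_{k}(-1)^k q^{k^2-2k}z^{-k}$; a shift $k\mapsto k+1$ followed by the symmetry $j(q/z;q^2)=j(zq;q^2)$ rewrites this as $-j(zq;q^2)/(qz)$, matching $h$.

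Once both functions satisfy the same inhomogeneous equation, the difference $f:=g-h$ satisfies $f(q^2 z) = -f(z)/q$. Writing $f=\sum_{n} a_n z^n$ (legitimate since $f$ is entire on $\C^{*}$), the relation forces $a_n(q^{2n+1}+1)=0$ for every $n\in\Z$; for $|q|<1$ this vanishes only if $a_n=0$, so $f\equiv 0$ and the identity follows. I expect the main obstacle to be the bookkeeping in the functional equation $T(q^2z) = j(-q;q^2) + zT(z)$: the index shift, the partial fraction decomposition, and the identification $\sum_{s}q^{s^2}=j(-q;q^2)$ must all be lined up without sign or power-of-$q$ slips to land precisely on the matching right-hand side.
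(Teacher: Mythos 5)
Your proposal is correct, but it follows a genuinely different route from the paper. Both arguments take Mortenson's formula \eqref{4mid} as the starting point, but from there they diverge. The paper never touches the series for $m(-z,q^2,-q)$ directly: it first replaces the auxiliary parameter $-q$ by $q/z$ using Theorem \ref{mthe33} (the change-of-$z$ formula, which applies because the factor $j(xz_0z_1;q^2)=j(q^2;q^2)$ vanishes) and by $qz$ using Proposition \ref{mpro31}, after which the two bilateral sums in \eqref{4th1id} are literally the definition of $m(-z,q^2,q/z)$ and of $-z^{-1}m(-1/z,q^2,qz)$ — a two-line deduction given the Hickerson--Mortenson machinery. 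You instead keep the parameter $-q$, expand $m(-z,q^2,-q)=j(-q;q^2)^{-1}\sum_r q^{r^2}/(1-zq^{2r-1})$, and prove the resulting Laurent-series identity $h=g$ from scratch: both sides are holomorphic on $\C^{*}$ (the zeros of $j(zq;q^2)$ at $z=q^{\mathrm{odd}}$ cancel the simple poles of the Lerch sum), both satisfy $F(q^2z)+q^{-1}F(z)=-j(zq;q^2)/(qz)$, and the homogeneous equation $f(q^2z)=-f(z)/q$ forces every Laurent coefficient to satisfy $a_n(q^{2n+1}+1)=0$, hence to vanish since $|q|^{2n+1}\neq 1$. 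I checked the key computations — the partial fraction $q^{-2s}/(1-zq^{2s-1})=q^{-2s}+zq^{-1}/(1-zq^{2s-1})$, the evaluation $\sum_s q^{s^2}=j(-q;q^2)$, the factorization $q^{-2k}+q^{-1}=q^{-2k}(1+q^{2k-1})$, and the identification of $\sum_k(-1)^kq^{k^2-2k}z^{-k}$ with $-j(zq;q^2)/(qz)$ — and they all go through, as does the reduction of \eqref{4th1id} to $h=g$ after accounting for the reindexing $k\mapsto 1-k$ that exchanges the $z^k$ and $z^{1-k}$ forms. What your approach buys is independence from Theorem \ref{mthe33} and Proposition \ref{mpro31}: it is self-contained modulo the triple product and \eqref{4mid}. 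What it costs is the bookkeeping of two functional equations and the holomorphy check, where the paper's argument is essentially immediate once the quoted results are in hand.
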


\begin{proof}
By Theorem \ref{mthe33}
$$
m(-z,q^2,-q)=m(-z,q^2,q/z).
$$
In addition by Proposition \ref{mpro31}
$$
m(-z,q^2,q/z)=-\frac{1}{z}m(-1/z,q^2,z/q)=-\frac{1}{z}m(-1/z,q^2,qz).
$$
Then \eqref{4th1id} holds obviously.
\end{proof}

Letting $z\rightarrow 1$ in \eqref{4th1id}, we have the Appell-Lerch series of $\mathscr{F}_{4,-1}(q)$.
\begin{cor}
\beq
\label{4cor1}
\frac{J_1^2}{J_2}\mathscr{F}_{4,-1}(q)=\sum_{k=1}^{\infty}\frac{(-1)^{k-1}(2k-1)q^{k^2}}{1+q^{2k-1}}.
\eeq
\end{cor}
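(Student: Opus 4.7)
The plan is to deduce the corollary by taking the limit $z\to 1$ in Theorem \ref{4th1}, since the right-hand form of \eqref{4th1id} already packages every term of the desired identity except for the factor $2k-1$, which should emerge as the $z\to 1$ limit of $(z^{1-k}-z^k)/(1-z)$.

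First I would handle the left-hand side. Both $(zq,q/z,q^2;q^2)_\infty$ and $F_4(z,q)=\sum_{n\geq 0}(-1)^n(q;-q)_{2n}q^{n+1}/(zq,q/z;q^2)_{n+1}$ are analytic at $z=1$ (the leading $(1-zq)(1-q/z)$ in the denominator contributes only $(1-q)^2$ at $z=1$), so the limit is obtained by simple substitution. This yields $F_4(1,q)=\mathscr{F}_{4,-1}(q)$ and the prefactor $(q;q^2)_\infty^2(q^2;q^2)_\infty$. Then the identity $J_1=(q;q^2)_\infty J_2$, i.e.\ $(q;q^2)_\infty=J_1/J_2$, collapses this to $J_1^2/J_2$, matching the left-hand side of \eqref{4cor1}.

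Next I would handle the right-hand side. Writing
\[
\frac{z^{1-k}-z^k}{1-z}=z^{1-k}\cdot\frac{1-z^{2k-1}}{1-z},
\]
the first factor tends to $1$ and the second to $2k-1$ as $z\to 1$, so each summand converges to $(-1)^{k-1}(2k-1)q^{k^2}/(1+q^{2k-1})$. Absolute convergence (uniform in $z$ near $1$, guaranteed by the factors $q^{k^2}$) lets me exchange limit and sum, producing exactly the series in \eqref{4cor1}.

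There is no genuine obstacle here: the computation is a direct specialization of Theorem \ref{4th1}, and the only care needed is the $0/0$ behavior of $(z^{1-k}-z^k)/(1-z)$ at $z=1$, which is resolved by the geometric-series factoring above.
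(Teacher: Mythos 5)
Your proposal is correct and is exactly the paper's route: the paper derives the corollary by letting $z\to 1$ in \eqref{4th1id}, and your computation (the prefactor collapsing to $J_1^2/J_2$ via $(q;q^2)_\infty=J_1/J_2$, and $(z^{1-k}-z^k)/(1-z)\to 2k-1$) just supplies the details the paper leaves implicit.
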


Then we prove the Hecke-Rogers series of $F_4(z,q)$.

\begin{theorem}
\label{4th2}
For $z\in \mathbb{C^*}$ and $z\neq 1$, we have
\beq
\label{4th2id}
(-zq,-1/z,q;q)_\infty F_4(z,-q)=\sum_{1\leq j\leq n}q^{n^2-j(j-1)/2}\cdot \frac{z^n-z^{-n}}{1-z}.
\eeq
\end{theorem}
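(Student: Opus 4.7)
Plan: Follow the same method as Theorem \ref{4th1}: start from \eqref{4mid} applied with $q$ replaced by $-q$, which gives $(1-1/z)F_4(z,-q) = m(-z,q^2,q)$. A short Jacobi triple product computation using $(-z;q)_\infty = (1+z)(-zq;q)_\infty$ and $(-1/z;q)_\infty = (1+1/z)(-q/z;q)_\infty$ shows that $(-zq,-1/z,q;q)_\infty = j(-z;q)/z$, so the claim reduces to
\begin{equation*}
\frac{j(-z;q)}{z-1}\,m(-z,q^2,q) = \sum_{1\leq j\leq n}q^{n^2-j(j-1)/2}\cdot\frac{z^n-z^{-n}}{1-z}.
\end{equation*}

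To evaluate $m(-z,q^2,q)$, I would apply Theorem \ref{mthe33} with $x=-z$, base $q^2$, $z_1=q$ and $z_0=-q/z$: the bracket vanishes because $j((-z)(-q/z)q;q^2) = j(q^2;q^2) = 0$, and so $m(-z,q^2,q) = m(-z,q^2,-q/z)$. Proposition \ref{mpro31} then gives $m(-z,q^2,-q/z) = -z^{-1}m(-1/z,q^2,-zq)$, and a direct expansion from the series definition of $m$ yields
\begin{equation*}
(-zq,-1/z,q;q)_\infty F_4(z,-q) = \frac{j(-z;q)}{z(1-z)\,j(-zq;q^2)}\sum_{r\in\mathbb{Z}}\frac{q^{r^2}z^r}{1-q^{2r-1}}.
\end{equation*}

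Next I would convert the bilateral Lambert-type series into Hecke--Rogers form by splitting the sum at $r=0$, expanding $1/(1-q^{2r-1})$ as a geometric series in the convergent direction for each range (with an extra sign for $r\leq 0$), and reindexing via $n=r+m$ for $r\geq 1$ and $n=-r+m$ for $r\leq 0$. The resulting double sum collapses to
\begin{equation*}
\sum_{r\in\mathbb{Z}}\frac{q^{r^2}z^r}{1-q^{2r-1}} = \sum_{n\geq 1}\sum_{j=1}^{n} q^{n^2-j(j-1)}\bigl(z^{n-j+1}-z^{j-n}\bigr).
\end{equation*}

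The final and most delicate step is to absorb the theta quotient $j(-z;q)/(z\,j(-zq;q^2))$ into this Hecke-type double sum. Splitting $q$-Pochhammers over even and odd indices shows the quotient equals $(1+z)(-zq^2;q^2)_\infty(-q^2/z;q^2)_\infty(q;q^2)_\infty/z$, which via Jacobi triple product is a single theta series times elementary factors. The key observation is that the half-triangular exponents $j(j-1)/2$ appearing in the target arise from the theta exponents $s(s-1)/2$ in $j(-z;q)=\sum_s q^{s(s-1)/2}z^s$: multiplying the theta expansion into the double sum and performing a careful index rearrangement should convert the exponent $n^2-j(j-1)$ into $n^2-j(j-1)/2$ and transform the $z$-factor $z^{n-j+1}-z^{j-n}$ into $z^n-z^{-n}$. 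I expect this combinatorial matching to be the main technical obstacle; it can be verified by comparing the coefficient of each $z^k$ on both sides after expanding the theta quotient term by term.
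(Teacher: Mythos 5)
Your setup is correct and is essentially the paper's own: replacing $q$ by $-q$ in Theorem \ref{4th1} gives exactly the formula $(-zq,-q/z,q^2;q^2)_\infty F_4(z,-q)=\frac{1}{1-z}\sum_k \frac{q^{k^2}z^k}{1-q^{2k-1}}$ that you rederive from \eqref{4mid}, and your observations that $(-zq,-1/z,q;q)_\infty=j(-z;q)/z$ and that $j(-z;q)/j(-zq;q^2)=(-z,-q^2/z;q^2)_\infty(q;q^2)_\infty$ reproduce the paper's reduction of \eqref{4th2id} to the single identity
\begin{equation*}
\Bigl(\sum_{m\in\mathbb{Z}}q^{m(m+1)}z^m\Bigr)\sum_{k\in\mathbb{Z}}\frac{q^{k^2}z^k}{1-q^{2k-1}}=\frac{J_2^2}{J_1}\sum_{1\leq j\leq |n|}sg(n)\,q^{n^2-j(j-1)/2}z^n.
\end{equation*}
Your expansion of the bilateral Lambert series into $\sum_{n\geq 1}\sum_{j=1}^{n}q^{n^2-j(j-1)}(z^{n-j+1}-z^{j-n})$ is also correct, though it is not needed for what follows.

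The genuine gap is the final step, which is the entire mathematical content of the proof. You write that a "careful index rearrangement should convert the exponent $n^2-j(j-1)$ into $n^2-j(j-1)/2$" and that the claim "can be verified by comparing the coefficient of each $z^k$ on both sides"; but this is a statement that a proof ought to exist, not a proof. Note that the target carries the factor $J_2^2/J_1=\sum_{k\geq 0}q^{k(k+1)/2}$, itself an infinite theta series, so no rearrangement of the product of your two sums into a single Hecke-type sum is available: the coefficient of $z^n$ on the left is the Lambert-type sum $A_n=\sum_{k}q^{k^2+(n-k)(n-k+1)}/(1-q^{2k-1})$, and its equality with $B_n=\frac{J_2^2}{J_1}\sum_{j=1}^{|n|}sg(n)q^{n^2-j(j-1)/2}$ is a genuine theta-function identity. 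The paper establishes it by showing that both $A_n$ and $B_n$ are odd under $n\mapsto -n$ and satisfy the recurrence $A_{n+1}/q^{(n+1)^2}-A_n/q^{n^2}=q^{-n(n+1)/2}J_2^2/J_1$, where for $A_n$ the difference telescopes to the constant theta sum $\sum_k q^{(2k-n)(2k+1-n)/2}$, which must be recognized as $J_2^2/J_1$ independently of $n$; induction then gives $A_n=B_n$. Some argument of this kind must be supplied --- as written, your proposal stops exactly where the work begins.
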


\begin{proof}
Replacing $q$ by $-q$ in \eqref{4th1id}, we have
$$
(-zq,-q/z,q^2;q^2)_\infty F_4(z,-q)=\frac{1}{1-z}\sum_{k=-\infty}^{\infty}\frac{q^{k^2}}{1-q^{2k-1}}z^k.
$$
Hence \eqref{4th2id} is equivalent to
\beq
\label{4th21}
(-zq,-1/z,q;q)_\infty \sum_{k=-\infty}^{\infty}\frac{q^{k^2}}{1-q^{2k-1}}z^k=(-zq,-q/z,q^2;q^2)_\infty \sum_{1\leq j\leq |n|}sg(n)q^{n^2-j(j-1)/2}z^n.
\eeq
Noting that
$$
\frac{(-zq,-1/z,q;q)_\infty}{(-zq,-q/z,q^2;q^2)_\infty}=\frac{J_1}{J_2^2}(-zq^2,-1/z;q^2)_\infty,
$$
\eqref{4th21} can be simplified as
$$
\sum_{m=-\infty}^{\infty}q^{m(m+1)}z^m\sum_{k=-\infty}^{\infty}\frac{q^{k^2}}{1-q^{2k-1}}z^k=\frac{J_2^2}{J_1}\sum_{1\leq j\leq |n|}sg(n)q^{n^2-j(j-1)/2}z^n.
$$
Denote $A_n$ and $B_n$ to be the coefficients of $z^n$ on both sides, respectively. Then
$$
A_n=\sum_{k=-\infty}^{\infty}\frac{q^{k^2+(n-k)(n-k+1)}}{1-q^{2k-1}},
$$
and
$$
B_n=\frac{J_2^2}{J_1}\sum_{j=1}^{|n|}sg(n)q^{n^2-j(j-1)/2}.
$$
It is easy to verify that $B_n$ satisfy
\begin{enumerate}
\item[(1)]For all $n\in \mathbb{Z}$
$$
B_n+B_{-n}=0,
$$
\item[(2)]For all $n\in \mathbb{N}$
$$
\frac{B_{n+1}}{q^{(n+1)^2}}-\frac{B_n}{q^{n^2}}=q^{-\frac{n(n+1)}{2}}\frac{J_2^2}{J_1}.
$$
\end{enumerate}
Noting that (1) implies $B_0=0$. If $A_n$ also satisfy (1) and (2), then $A_n=B_n$ for all $n\in \mathbb{Z}$ by mathematical induction. Since
$$
A_{-n}=\sum_{k=-\infty}^{\infty}\frac{q^{k^2+(-n-k)(-n-k+1)}}{1-q^{2k-1}}=-\sum_{k=-\infty}^{\infty}\frac{q^{k^2+(n-k)(n-k+1)}}{1-q^{2k-1}}=-A_n,
$$
by replacing $k$ by $1-k$,  it shows that $A_n$ satisfy (1). On the other hand,
$$
A_{n}=\sum_{k=-\infty}^{\infty}\frac{q^{k^2+(n-k)(n-k+1)}}{1-q^{2k-1}}=q^{n^2}\sum_{k=-\infty}^{\infty}\frac{q^{(k-n)(2k-1)}}{1-q^{2k-1}}.
$$
We find
\beq
\label{4th22}
\frac{A_{n+1}}{q^{(n+1)^2}}-\frac{A_n}{q^{n^2}}=\sum_{k=-\infty}^{\infty}\frac{q^{(k-n-1)(2k-1)}-q^{(k-n)(2k-1)}}{1-q^{2k-1}}=\sum_{k=-\infty}^{\infty}q^{(k-n-1)(2k-1)}.
\eeq
Let
\beq
\label{4th24}
C_n:=\sum_{k=-\infty}^{\infty}q^{(2k-n)(2k+1-n)/2}=\sum_{k=-\infty}^{\infty}q^{(k-n-1)(2k-1)+n(n+1)/2}.
\eeq
Then
$$
C_{n+1}=\sum_{k=-\infty}^{\infty}q^{(2k-n-1)(2k-n)/2}=\sum_{k=-\infty}^{\infty}q^{(2(n-k)-n+1)(2(n-k)-n)/2}=C_n.
$$
That is
\beq
\label{4th25}
C_n=C_0=\frac{J_2^2}{J_1}.
\eeq
By \eqref{4th22} - \eqref{4th25}
$$
\frac{A_{n+1}}{q^{(n+1)^2}}-\frac{A_n}{q^{n^2}}=q^{-\frac{n(n+1)}{2}}C_n=q^{-\frac{n(n+1)}{2}}\frac{J_2^2}{J_1},
$$
we find that $A_n$ also satisfy (2). Hence $A_n=B_n$ for all $n\in \mathbb{Z}$ and \eqref{4th21} holds.
\end{proof}

From \eqref{4th2id}, we can obtain the Hecke-Rogers series of $V_1(q)$ by $z=i$, which is equivalent to \cite[Eq. (2.37)]{Cu-Gu-Ha-18}
\beq
\label{4V1HR}
\frac{J_1J_4}{J_2}V_1(q)=\sum_{1\leq j\leq n}\left(\frac{-4}{n}\right)q^{n^2-j(j-1)/2},
\eeq
where $(\frac{\cdot}{\cdot})$ denote the Kronecker symbol. Replacing $q$ by $-q$ and letting $z\rightarrow 1$ in \eqref{4th2id}, we have the Hecke-Rogers series of $\mathscr{F}_{4,-1}(q)$ which is similar  to \eqref{4V1HR}.
\begin{cor}We have
$$
\frac{J_1J_4}{J_2}\mathscr{F}_{4,-1}(q)=\sum_{1\leq j\leq n}(-1)^{n-1+j(j-1)/2}nq^{n^2-j(j-1)/2}.
$$
\end{cor}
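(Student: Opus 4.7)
The plan is to derive this corollary as the $z \to 1$ limit of the identity obtained from \eqref{4th2id} by the substitution $q \mapsto -q$. Making this substitution produces
$$
(zq, -1/z, -q; -q)_\infty F_4(z, q) = \sum_{1 \le j \le n} (-q)^{n^2 - j(j-1)/2} \cdot \frac{z^n - z^{-n}}{1-z},
$$
where on the left I will use the identification $F_4(1, q) = \mathscr{F}_{4,-1}(q)$ recorded immediately after Theorem \ref{4th1}. Both sides are regular at $z=1$ because of the vanishing of the factor $z^n - z^{-n}$ and the blow-up of $(-1;-q)_\infty$, so the limit is the right tool.

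I would first handle the right-hand side. Since $z^n - z^{-n}$ vanishes at $z = 1$ with derivative $2n$, L'H\^opital gives $\lim_{z \to 1}(z^n - z^{-n})/(1-z) = -2n$. Combining this with the sign identity $(-q)^{n^2 - j(j-1)/2} = (-1)^{n + j(j-1)/2} q^{n^2 - j(j-1)/2}$ (using $(-1)^{n^2} = (-1)^n$) turns the limit of the right-hand side into $2 \sum_{1 \le j \le n} (-1)^{n-1+j(j-1)/2} n \, q^{n^2 - j(j-1)/2}$, exactly the claimed right-hand side up to an overall factor of $2$.

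The main remaining task is to show that $(zq, -1/z, -q; -q)_\infty\big|_{z=1} = (q, -1, -q; -q)_\infty$ equals $2J_1J_4/J_2$. Peeling off the $k=0$ term of $(-1;-q)_\infty$ gives $(-1; -q)_\infty = 2(q; -q)_\infty$, and splitting $(q;-q)_\infty$ by the parity of $k$ yields the identities $(q;-q)_\infty = (q;q^2)_\infty(-q^2;q^2)_\infty = J_1J_4/J_2^2$ and $(-q;-q)_\infty = J_2^3/(J_1J_4)$. Substituting these into
$$
(q,-1,-q;-q)_\infty = 2(q;-q)_\infty^2(-q;-q)_\infty
$$
telescopes to $2J_1J_4/J_2$, after which a cancellation of $2$ with the factor from the right-hand side yields the corollary.

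The only real obstacle is the careful bookkeeping of signs in the substitution $q\mapsto -q$ and the $q$-product manipulation that collapses the singular factor $(-1;-q)_\infty$ into $2J_1J_4/J_2$. Both are routine given Theorem \ref{4th2} and the standard $q$-product identities $(q;q^2)_\infty = J_1/J_2$ and $(-q^2;q^2)_\infty = J_4/J_2$; no further input is required.
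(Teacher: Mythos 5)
Your proposal is correct and is exactly the route the paper intends: replace $q$ by $-q$ in Theorem \ref{4th2}, let $z\to 1$ (picking up the limit $-2n$ from $(z^n-z^{-n})/(1-z)$), and evaluate $(q,-1,-q;-q)_\infty=2J_1J_4/J_2$ so that the factors of $2$ cancel. One cosmetic quibble: the factor $(-1;-q)_\infty=2(q;-q)_\infty$ does not blow up at $z=1$ --- it is simply finite and nonzero, which is why the computation reduces to a direct substitution --- but your subsequent explicit calculation is right and nothing else is needed.
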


\subsection{Hecke-Rogers series of $\mathscr{F}_{8,-1}(q)$}

Let
$$
F_8(z,q):=\sum_{n=0}^{\infty}\frac{(-1)^n(q;q^2)_nq^{(n+1)^2}}{(zq,q/z;q^2)_{n+1}}.
$$
This function was also studied by Alfes, Bringmann and Lovejoy \cite{Al-Br-Lo-11} where showed that $F_8(1,q)=N^o(0,-1,1,q)=\mathscr{F}_{8,-1}(q)$ and $F_8(-1,-q)=-N^o(0,1,1,q)=-A(q)$ in page 3, and Mortenson \cite{Mo-14} where showed  the Appell-Lerch series of $F_8(z,q)$. In this subsection, again, we use the formulas of Appell-Lerch series to rewrite $F_8(z,q)$ as another form and then prove the Hecke-Rogers identity.

\begin{lemma}
\label{8lem1}
If $z\in \mathbb{C^*}$ is not an integral power of $q$, then
\beq
\label{8mlem}
m(-z,q,-1)=m(-qz^2,q^4,q^2/z^2)-\frac{1}{z}m(-q/z^2,q^4,q^2z^2)+\frac{j(q;q^2)^2}{2j(z;q)}.
\eeq
\end{lemma}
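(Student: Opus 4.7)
The plan is to apply Corollary \ref{mcor37} to $m(-z,q,z_0)$ at a third argument $z_0=w$ satisfying $w^{2}=q/z$. With $x=-z$, this choice forces the factor $j(-xz_0^{2};q)=j(zw^{2};q)=j(q;q)=0$, annihilating the theta correction term in Corollary \ref{mcor37} and yielding
\[
m(-z,q,w)=m(-qz^{2},q^{4},q^{2}/z^{2})+\tfrac{z}{q}\,m(-z^{2}/q,q^{4},q^{4}).
\]

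Next, Theorem \ref{mthe33} with $x=-z$, $z_{0}=w$, $z_{1}=-1$ produces the shift from $w$ back to $-1$:
\[
m(-z,q,-1)-m(-z,q,w)=\frac{w\,J_{1}^{3}\,j(-1/w;q)\,j(zw;q)}{j(w;q)\,j(-1;q)\,j(-zw;q)\,j(z;q)}.
\]
To remove the $m(-z^{2}/q,q^{4},q^{4})$ term, I will run the same argument in parallel with $x=-1/z$ and $(z_{0}')^{2}=qz$, producing a companion expression involving $m(-q/z^{2},q^{4},q^{2}z^{2})$ and $m(-1/(qz^{2}),q^{4},q^{4})$. Combining this with the first via the reciprocal $m(-z,q,-1)=-z^{-1}\,m(-1/z,q,-1)$ from Proposition \ref{mpro31}, the two $m(\cdot,q^{4},q^{4})$ contributions are exchanged by the reciprocal $x\mapsto x^{-1}$ together with the $q^{4}$-period $m(x,q^{4},\zeta)=m(x,q^{4},q^{4}\zeta)$ and cancel in the symmetrised expression. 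What remains is
\[
m(-qz^{2},q^{4},q^{2}/z^{2})-\tfrac{1}{z}\,m(-q/z^{2},q^{4},q^{2}z^{2})+(\text{symmetrised theta remainder}).
\]

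The main obstacle is showing that the symmetrised theta remainder equals $\tfrac{j(q;q^{2})^{2}}{2\,j(z;q)}=\tfrac{J_{1}^{4}}{2\,J_{2}^{2}\,j(z;q)}$. The individual $j$-factors contain half-integer powers of $q$ through $w=\sqrt{q/z}$ and $w'=\sqrt{qz}$, and these half-integer powers must cancel in the symmetric combination. I plan to reduce the resulting expression using standard base-change factorisations of theta functions, for instance $j(x;q)\,j(-x;q)=(J_{1}^{2}/J_{2})\,j(x^{2};q^{2})$; the derivative evaluations of Lemmas \ref{pthe1}--\ref{pthe2} and Garvan's MAPLE package \eqref{eta} are available as backup to dispatch any residual eta-quotient identity.
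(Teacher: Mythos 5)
Your first two steps coincide with the paper's proof: Corollary \ref{mcor37} is applied at the special third argument $w$ with $w^{2}=q/z$, so that $j(-xw^{2};q)=j(q;q)=0$ kills the theta correction, and Theorem \ref{mthe33} with $(z_{0},z_{1})=(w,-1)$ supplies the term $\frac{j(q;q^2)^2}{2j(z;q)}$. (That shift term collapses on its own, using only $j(q/x;q)=j(x;q)$, $j(x^{-1};q)=-x^{-1}j(x;q)$ and $j(-1;q)=2J_2^2/J_1$; no symmetrisation and no eta-quotient verification is needed there, so the half-integer-power difficulty you anticipate does not actually arise.)

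The genuine gap is in your treatment of the second term produced by Corollary \ref{mcor37}. First, $m(-x^{2}/q,q^{4},q^{4})$ is not even defined, since $j(q^{4};q^{4})=0$ sits in the denominator of the definition of $m$; the third argument ``$q^{4}$'' in the quoted corollary is a misprint for $z^{4}$, and this is how the paper uses it: with $w^{4}=q^{2}/z^{2}$ one gets $\frac{z}{q}m(-z^{2}/q,q^{4},q^{2}/z^{2})$ directly. Second, even granting the literal statement, the proposed cancellation fails: running the argument at $x=-1/z$ with $(z_{0}')^{2}=qz$ and pulling back through $m(-z,q,-1)=-z^{-1}m(-1/z,q,-1)$ turns $\frac{1}{qz}m(-1/(qz^{2}),q^{4},q^{4})$ into $+\,m(-qz^{2},q^{4},q^{4})$ (via $m(x^{-1},q,\zeta)=x\,m(x,q,\zeta^{-1})$ and the $q^{4}$-period), whose first argument $-qz^{2}$ differs from the $-z^{2}/q$ in the companion term $\frac{z}{q}m(-z^{2}/q,q^{4},q^{4})$. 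These are not negatives of one another, so averaging the two expressions leaves the two desired main terms carrying a spurious factor $\frac{1}{2}$ plus an uncancelled residue of $q^{4}$-terms. Once the corollary is read with $z^{4}$, the whole detour is unnecessary: Proposition \ref{mpro31} gives $\frac{z}{q}m(-z^{2}/q,q^{4},q^{2}/z^{2})=-\frac{1}{z}m(-q/z^{2},q^{4},z^{2}/q^{2})=-\frac{1}{z}m(-q/z^{2},q^{4},q^{2}z^{2})$, which is exactly the paper's one-line finish. (Incidentally, your choice $w^{2}=q/z$ is the correct one; the paper's $z_{0}=\sqrt{z/q}$ appears itself to be a misprint for $\sqrt{q/z}$.)
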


\begin{proof}
Setting $x=-z$, $z_1=-1$ and $z_0=\sqrt{z/q}$ in Theorem \ref{mthe33}, we have
\beq
\label{8m0}
m(-z,q,-1)=m(-z,q,\sqrt{z/q})+\frac{j(q;q^2)^2}{2j(z;q)}.
\eeq
Then by Corollary \ref{mcor37} and Proposition \ref{mpro31}
\begin{align}
\label{8m1}
m(-z,q,\sqrt{z/q})=&m(-qz^2,q^4,q^2/z^2)+\frac{z}{q}m(-z^2/q,q^4,q^2/z^2)\\
\nonumber
=&m(-qz^2,q^4,q^2/z^2)-\frac{1}{z}m(-q/z^2,q^4,z^2/q^2)\\
\nonumber
=&m(-qz^2,q^4,q^2/z^2)-\frac{1}{z}m(-q/z^2,q^4,q^2z^2).
\end{align}
\eqref{8mlem} holds by \eqref{8m0} and \eqref{8m1}.
\end{proof}

\begin{theorem}
\label{8th1}
For $z\in \mathbb{C^*}$ and $z\neq 1$, we have
\beq
\label{8th1id}
(z^2q^2,q^2/z^2,q^4;q^4)_\infty F_8(z,q)=\sum_{k=-\infty}^{\infty}\frac{(-1)^{k-1}q^{2k^2}}{1+q^{4k-1}}\cdot \frac{z^{1-2k}-z^{2k}}{1-z}.
\eeq
\end{theorem}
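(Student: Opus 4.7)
The plan is to mimic the strategy used in Theorem \ref{4th1}: start from Mortenson's Appell-Lerch series representation of $F_8(z,q)$, transform the relevant $m$-function using the tools of Section 2, and then expand the defining Appell-Lerch series to recognize the right-hand side of \eqref{8th1id}.

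First, I would invoke Mortenson's Appell-Lerch formula from \cite{Mo-14}, which expresses $(1-1/z)F_8(z,q)$ in terms of an $m$-function essentially of the form $m(-z,q,-1)$. The new ingredient compared with Theorem \ref{4th1} is Lemma \ref{8lem1}, which performs a $q\to q^4$ dissection:
$$
m(-z,q,-1) = m(-qz^2,q^4,q^2/z^2) - z^{-1} m(-q/z^2,q^4,q^2z^2) + \frac{j(q;q^2)^2}{2j(z;q)}.
$$
A direct computation from the definition of $m$ gives
$$
m(-qz^2,q^4,q^2/z^2) = \frac{1}{j(q^2/z^2;q^4)}\sum_{r\in\mathbb{Z}}\frac{(-1)^r q^{2r^2} z^{-2r}}{1+q^{4r-1}},
$$
and similarly $m(-q/z^2,q^4,q^2z^2) = j(q^2z^2;q^4)^{-1}\sum_r (-1)^r q^{2r^2} z^{2r}/(1+q^{4r-1})$. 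Using the symmetry $j(q^2z^2;q^4)=j(q^2/z^2;q^4)=(z^2q^2,q^2/z^2,q^4;q^4)_\infty$, multiplying both sides of the proposed identity by this theta product clears the Appell-Lerch denominators and leaves the bilateral sum
$$
\sum_{r\in\mathbb{Z}}\frac{(-1)^r q^{2r^2}}{1+q^{4r-1}}\bigl(z^{-2r}-z^{2r-1}\bigr),
$$
which, by a short calculation exploiting $(1-1/z)/(1-z)=-1/z$, equals $(1-1/z)$ times the right-hand side of \eqref{8th1id} after the relabelling $r\leftrightarrow k$.

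The main obstacle is handling the singular correction $j(q;q^2)^2/(2j(z;q))$ produced by Lemma \ref{8lem1}, since nothing like it appears on the right-hand side of \eqref{8th1id}. I would need to show that Mortenson's original representation of $F_8(z,q)$ already contains a matching term that cancels it; this is completely analogous to the cancellation engineered between \eqref{8m0} and \eqref{8m1} in the proof of Lemma \ref{8lem1}, and should follow by applying Theorem \ref{mthe33} to shift the change-of-variable parameter in Mortenson's $m$-function so that the singular parts align. The remaining work is careful bookkeeping of signs, powers of $z$, and the prefactor $(-1)^{k-1}$ to match the two sides exactly.
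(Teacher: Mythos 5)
Your proposal is correct and follows the paper's proof essentially verbatim: the singular correction you flag as the ``main obstacle'' is a non-issue, because Mortenson's Eq.\ (2.15) already reads $F_8(z,q)=\frac{-z}{1-z}\bigl(m(-z,q,-1)-\frac{j(q;q^2)^2}{2j(z;q)}\bigr)$, so that term cancels the one produced by Lemma \ref{8lem1} with no extra application of Theorem \ref{mthe33}. After that cancellation, expanding the two $m$-functions from the definition gives the right-hand side of \eqref{8th1id} exactly as you compute.
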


\begin{proof}
If $z$ is an integral power of $q$, it is easy to check that both sides of \eqref{8th1} are zero. For $z$ is not an integral power of $q$, setting $x=-z$ in \cite[Eq. (2.15)]{Mo-14}
$$
F_8(z,q)=\frac{-z}{1-z}\left(m(-z,q,-1)-\frac{j(q;q^2)^2}{2j(z;q)}\right).
$$
By Lemma \ref{8lem1}
\begin{align*}
F_8(z,q)=&\frac{1}{1-z}(m(-q/z^2,q^4,q^2z^2)-zm(-qz^2,q^4,q^2/z^2))\\
=&\frac{1}{j(q^2z^2;q^4)}\sum_{k=-\infty}^{\infty}\frac{(-1)^{k-1}q^{2k^2}}{1+q^{4k-1}}\cdot \frac{z^{1-2k}-z^{2k}}{1-z},
\end{align*}
which is \eqref{8th1id}.
\end{proof}

Letting $z\rightarrow 1$, \eqref{8th1id} yields
\begin{cor}
\beq
\label{8ALid}
\frac{J_2^2}{J_4}\mathscr{F}_{8,-1}(q)=\sum_{k=-\infty}^{\infty}\frac{(-1)^{k-1}(4k-1)q^{2k^2}}{1+q^{4k-1}}.
\eeq
\end{cor}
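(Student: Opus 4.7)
The plan is to obtain this corollary directly by letting $z\to 1$ in the identity \eqref{8th1id} of Theorem \ref{8th1}, as indicated by the sentence preceding the statement. All substantive work has already been done in proving Theorem \ref{8th1}; what remains is a careful limit computation on both sides.

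On the left, the product factor $(z^2q^2,q^2/z^2,q^4;q^4)_\infty$ specializes at $z=1$ to
$$
(q^2;q^4)_\infty^2(q^4;q^4)_\infty=\frac{J_2^2}{J_4^2}\cdot J_4=\frac{J_2^2}{J_4},
$$
using $(q^2;q^4)_\infty=J_2/J_4$. Combined with the fact (recorded at the start of the subsection, following Alfes--Bringmann--Lovejoy) that $F_8(1,q)=\mathscr{F}_{8,-1}(q)$, the left-hand side of \eqref{8th1id} tends to $(J_2^2/J_4)\mathscr{F}_{8,-1}(q)$.

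On the right, each summand carries the factor $(z^{1-2k}-z^{2k})/(1-z)$, which has a removable singularity at $z=1$. A single application of L'Hopital, or equivalently differentiating $z^{1-2k}-z^{2k}$ at $z=1$, gives
$$
\lim_{z\to 1}\frac{z^{1-2k}-z^{2k}}{1-z}=-\bigl((1-2k)-2k\bigr)=4k-1.
$$
Substituting this into the sum produces exactly the right-hand side of the claimed corollary.

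The only nontrivial point is the interchange of limit and summation. I would justify this by working coefficient-wise in $q$: for each fixed power $q^N$, only finitely many terms (those with $2k^2\le N$ after expanding $1/(1+q^{4k-1})$) contribute, so the limit may be taken termwise on each $q$-coefficient. Alternatively, one can note uniform convergence on a small punctured neighborhood of $z=1$, since the Gaussian factor $q^{2k^2}$ dominates the polynomial growth of $z^{\pm 2k}$ for $z$ in any fixed annulus around the unit circle inside $|z|<|q|^{-1/2}$. Either way the interchange is routine, and this is the step most likely to need explicit comment; everything else is a direct specialization.
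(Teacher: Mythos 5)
Your proposal is correct and matches the paper exactly: the paper derives this corollary by letting $z\to 1$ in \eqref{8th1id}, and your limit computation $\lim_{z\to 1}(z^{1-2k}-z^{2k})/(1-z)=4k-1$ together with the specialization $(q^2;q^4)_\infty^2(q^4;q^4)_\infty=J_2^2/J_4$ and $F_8(1,q)=\mathscr{F}_{8,-1}(q)$ is precisely what is needed. Your remarks on justifying the interchange of limit and summation are more explicit than the paper, which states the deduction without comment.
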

By replacing $q$ by $-q$ and $z=-1$,  \eqref{8th1id} yields \cite[Eq. (5.1)]{Hi-Mo-14}
$$
\frac{J_2^2}{J_4}A(q)=\sum_{k=-\infty}^{\infty}\frac{(-1)^{k-1}q^{2k^2}}{1-q^{4k-1}}.
$$
 These easily imply that \cite[Lemma 3.1]{Ch-Ga}
$$
\mathscr{F}_{8,-1}(q)\equiv -A(-q) \pmod 4.
$$

\begin{theorem}
\label{8th2}
For $z\in \mathbb{C^*}$ and $z\neq 1$, we have
\beq
\label{8th2id}
(zq,q/z,q^2;q^2)_\infty F_8(z,q)=\sum_{1\leq j\leq |n|}sg(n)(-1)^{j-1}q^{2n^2-n-j^2+j}\cdot \frac{z^{1-j}-z^j}{1-z}.
\eeq
\end{theorem}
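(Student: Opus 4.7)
The plan is to mimic the proof of Theorem 3.5, using Theorem 8.1 as the main input. First, by the factorization $1-z^2q^{4k+2}=(1-zq^{2k+1})(1+zq^{2k+1})$, I convert the product in Theorem 8.1 into the product in Theorem 8.2:
$$
\frac{(zq,q/z,q^2;q^2)_\infty}{(z^2q^2,q^2/z^2,q^4;q^4)_\infty}=\frac{(q^2;q^4)_\infty}{(-zq,-q/z;q^2)_\infty}.
$$
Multiplying Theorem 8.1 by this ratio, clearing the common factor $1-z$, and invoking the Jacobi triple product $(-zq,-q/z,q^2;q^2)_\infty=\sum_{m\in\mathbb{Z}}q^{m^2}z^m$ together with $(q^2;q^4)_\infty=J_2/J_4$ reduces the target identity to
\begin{align*}
\frac{J_2^2}{J_4}\sum_{k\in\mathbb{Z}}\frac{(-1)^{k-1}q^{2k^2}}{1+q^{4k-1}}(z^{1-2k}-z^{2k}) = \sum_{m\in\mathbb{Z}}q^{m^2}z^m\sum_{1\le j\le|n|}sg(n)(-1)^{j-1}q^{2n^2-n-j^2+j}(z^{1-j}-z^j).
\end{align*}

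Next, I compare coefficients of $z^N$ on both sides. Denote them $A_N$ (LHS) and $B_N$ (RHS). The coefficient $A_N$ receives a single nonzero contribution (from $k=N/2$ if $N$ is even, from $k=(1-N)/2$ if $N$ is odd), while $B_N$ is a convolution sum over the index $m$. The goal is to show $A_N=B_N$ for all $N\in\mathbb{Z}$. Both sequences satisfy the common antisymmetry $A_N+A_{1-N}=0$: for $\{A_N\}$ this is immediate from pairing the $z^{2k}$ and $z^{1-2k}$ terms at the same $k$; for $\{B_N\}$ it follows from the functional equation $z\,S(1/z)=-S(z)$ satisfied by the Hecke-Rogers series $S(z):=\sum_{n,j}sg(n)(-1)^{j-1}q^{2n^2-n-j^2+j}(z^{1-j}-z^j)$ together with the symmetry $j(-zq;q^2)=j(-q/z;q^2)$. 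Finally, in parallel with the $C_n=J_2^2/J_1$ computation in the proof of Theorem 3.5, I establish a common recursion of the form $q^{-\alpha_{N+1}}A_{N+1}-q^{-\alpha_N}A_N=\beta_N\cdot J_2^2/J_4$, where the driving term $\beta_N$ is independent of $N$ after an index substitution of the type $k\mapsto N-k$. Induction on $|N|$, starting from the antisymmetric base case, then yields $A_N=B_N$ for all $N$.

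\textbf{Main obstacle.} The main technical obstacle lies in the recursion step for $B_N$: the convolution with $\sum_m q^{m^2}z^m$ splits into two sub-sums depending on whether $m\ge N$ or $m<N$, and these must be recombined after an index shift $m\mapsto m\pm 1$ to match the recursion for $A_N$. Moreover, the factor $1/(1+q^{4k-1})$ has opposite geometric expansions in the regimes $k\ge 1$ versus $k\le 0$, so coupling these expansions to the bilateral sum over $m$ requires careful bookkeeping. As in the proof of Theorem 3.5, the final $q$-identity needed to close the recursion should reduce to an eta-quotient statement that is, if necessary, verifiable by Garvan's MAPLE ETA-package.
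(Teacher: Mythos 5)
Your opening reduction is correct and is exactly what the paper does: using Theorem \ref{8th1}, the splitting $(z^2q^2,q^2/z^2,q^4;q^4)_\infty=(zq,-zq,q/z,-q/z;q^2)_\infty(q^4;q^4)_\infty$ and the triple product turn \eqref{8th2id} into the coefficient identity
$$
\frac{J_2^2}{J_4}\sum_{k=-\infty}^{\infty}\frac{(-1)^{k-1}q^{2k^2}}{1+q^{4k-1}}(z^{1-2k}-z^{2k})=\sum_{m=-\infty}^{\infty}q^{m^2}z^m\sum_{1\leq j\leq |n|}sg(n)(-1)^{j-1}q^{2n^2-n-j^2+j}(z^{1-j}-z^j),
$$
and your antisymmetries $A_N=-A_{1-N}$, $B_N=-B_{1-N}$ are both valid (the paper uses them to reduce odd $N$ to even $N$). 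The gap is in how you propose to close the argument. The recursion-plus-induction scheme of Theorem \ref{4th2} does not transfer here, because the roles of the two sides are swapped. In Theorem \ref{4th2} the Appell--Lerch side was the convolution, so $A_{n+1}/q^{(n+1)^2}-A_n/q^{n^2}$ produced numerators $q^{(k-n-1)(2k-1)}(1-q^{2k-1})$ that cancelled the denominators and left a theta function invariant under $k\mapsto n-k$, while the Hecke side carried a finite $j$-sum that telescopes. In the present identity $A_N$ is a \emph{single} term whose denominator $1+q^{\pm 4k-1}$ changes with $N$, so there is no $k$-sum to reindex and no cancellation in $A_{N+1}-A_N$; meanwhile $B_N$ is the convolution, whose $z$-coefficients are infinite sums over $n$ and do not telescope under $N\mapsto N+1$. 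Moreover, the involution $N\mapsto 1-N$ has no integer fixed point, so unlike Theorem \ref{4th2} (where $B_0=-B_0$ gives $B_0=0$ for free) the antisymmetry supplies no base case for your induction. You flag this step as the main obstacle but do not resolve it, and as described it would fail.

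What does work --- and what the paper does --- is a direct closed-form evaluation of $B_{2k}$, with no induction. Writing
$$
B_{2k}=\sum_{1\leq j\leq |n|}sg(n)(-1)^{j-1}q^{2n^2-n-j^2+j}\left(q^{(2k-1+j)^2}-q^{(2k-j)^2}\right),
$$
one observes that for fixed $n$ the exponents are $4k^2+(j-1)(4k-1)$ and $4k^2-j(4k-1)$, so the inner sum over $j$ is a pair of finite geometric series with ratio $-q^{\pm(4k-1)}$ and common denominator $1+q^{4k-1}$. After the constant terms cancel one is left with
$$
\frac{q^{4k^2}}{1+q^{4k-1}}\left(\sum_{n=-\infty}^{\infty}(-1)^nq^{2n^2-4nk}-\sum_{n=-\infty}^{\infty}(-1)^nq^{2n^2-2n+4nk}\right);
$$
the second bilateral sum vanishes under the sign-reversing involution $n\mapsto 1-2k-n$, and completing the square in the first gives $\frac{J_2^2}{J_4}\cdot\frac{(-1)^kq^{2k^2}}{1+q^{4k-1}}=A_{2k}$. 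I recommend replacing your recursion step with this computation; the rest of your outline then goes through.
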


\begin{proof}
By Theorem \ref{8th1}, for $z\in \mathbb{C^*}$, \eqref{8th2id} is equivalent to
\begin{align*}
(1-z)F_8(z,q)=&\frac{1}{(z^2q^2,q^2/z^2,q^4;q^4)_\infty} \sum_{k=-\infty}^{\infty}\frac{(-1)^{k-1}q^{2k^2}}{1+q^{4k-1}}(z^{1-2k}-z^{2k})\\
=&\frac{1}{(zq,q/z,q^2;q^2)_\infty} \sum_{1\leq j\leq |n|}sg(n)(-1)^{j-1}q^{2n^2-n-j^2+j}(z^{1-j}-z^j),
\end{align*}
which can be simplified as
\begin{align}
\label{8th21}
&\sum_{m=-\infty}^{\infty}(-1)^mq^{2m^2} \sum_{k=-\infty}^{\infty}\frac{(-1)^{k-1}q^{2k^2}}{1+q^{4k-1}}(z^{1-2k}-z^{2k})\\
\nonumber
=&\sum_{m=-\infty}^{\infty}q^{m^2}z^m \sum_{1\leq j\leq |n|}sg(n)(-1)^{j-1}q^{2n^2-n-j^2+j}(z^{1-j}-z^j).
\end{align}
Denote $A_k$ and $B_k$ to be the coefficients of $z^k$ on both sides of \eqref{8th21}, respectively. It is easy to see that
$$
A_{2k}=-A_{1-2k}=\frac{J_2^2}{J_4}\cdot \frac{(-1)^kq^{2k^2}}{1+q^{4k-1}}.
$$
For $B_k$, by \eqref{8th21} we have
$$
B_k=\sum_{1\leq j\leq |n|}sg(n)(-1)^{j-1}q^{2n^2-n-j^2+j}\left(q^{(k-1+j)^2}-q^{(k-j)^2}\right).
$$
So we can verify that
$$
B_{2k}=-B_{1-2k},
$$
and
\begin{align*}
B_{2k}=&\sum_{1\leq j\leq |n|}sg(n)(-1)^{j-1}q^{2n^2-n-j^2+j}\left(q^{(2k-1+j)^2}-q^{(2k-j)^2}\right)\\
=&\sum_{n=-\infty}^{\infty}sg(n)q^{4k^2+2n^2-n}\left(\frac{1-(-1)^nq^{|n|(4k-1)}}{1+q^{4k-1}}- \frac{1-(-1)^nq^{-|n|(4k-1)}}{1+q^{4k-1}}\right)\\
=&\frac{q^{4k^2}}{1+q^{4k-1}}\sum_{n=-\infty}^{\infty}sg(n)(-1)^nq^{2n^2-n}(q^{-|n|(4k-1)}-q^{|n|(4k-1)})\\
=&\frac{q^{4k^2}}{1+q^{4k-1}}\left(\sum_{n=-\infty}^{\infty}(-1)^nq^{2n^2-4nk}-\sum_{n=-\infty}^{\infty}(-1)^nq^{2n^2-2n+4nk}\right)\\
=&\frac{(-1)^kq^{2k^2}}{1+q^{4k-1}}\sum_{n=-\infty}^{\infty}(-1)^{n-k}q^{2(n-k)^2}\\
=&\frac{J_2^2}{J_4}\cdot \frac{(-1)^kq^{2k^2}}{1+q^{4k-1}}.
\end{align*}
Hence $A_k=B_k$ for all integers $k$ and \eqref{8th21} holds.
\end{proof}

By replacing $q$ by $-q$ and $z=-1$, \eqref{8th2id} yields the Hecke-Rogers series of $A(q)$ \cite[Eq. (2.24)]{Cu-Gu-Ha-18}
\beq
\label{8AHR}
\frac{J_1^2}{J_2}A(q)=\sum_{1\leq j\leq |n|}sg(n)(-1)^{n-1}q^{2n^2-n-j^2+j}.
\eeq
Letting $z\rightarrow 1$ in \eqref{8th2id}, we have the Hecke-Rogers series of $\mathscr{F}_{8,-1}(q)$ \cite[P. 376]{Hu-07} which is similar to \eqref{8AHR}.
\begin{cor}We have
\beq
\label{8hrid}
\frac{J_1^2}{J_2}\mathscr{F}_{8,-1}(q)=\sum_{1\leq j\leq |n|}sg(n)(-1)^{j-1}(2j-1)q^{2n^2-n-j^2+j}.
\eeq
\end{cor}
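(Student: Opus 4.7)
The approach is simply to specialize Theorem \ref{8th2} at $z=1$, as signalled in the sentence immediately preceding the corollary. The identity \eqref{8th2id} holds for all $z\in\mathbb{C}^\ast$ with $z\neq 1$, so the task is to verify that both sides are continuous at $z=1$ and to evaluate the resulting limits.

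For the left-hand side, recall that $F_8(1,q)=\mathscr{F}_{8,-1}(q)$ was noted when $F_8(z,q)$ was introduced, and the infinite product specializes as
\[
(zq,q/z,q^2;q^2)_\infty\Big|_{z=1}=(q;q^2)_\infty^2(q^2;q^2)_\infty=\frac{J_1^2}{J_2^2}\cdot J_2=\frac{J_1^2}{J_2},
\]
so the left-hand side limits to $(J_1^2/J_2)\,\mathscr{F}_{8,-1}(q)$. On the right-hand side, each summand carries the factor $(z^{1-j}-z^j)/(1-z)$, which is a removable singularity at $z=1$; either one application of L'H\^{o}pital's rule or the factorization $z^{1-j}-z^j=(1-z)\sum_{m=1-j}^{j-1}z^m$ shows this factor tends to $2j-1$. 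Thus the $(n,j)$-summand on the right limits to $sg(n)(-1)^{j-1}(2j-1)q^{2n^2-n-j^2+j}$, which is exactly the summand appearing in \eqref{8hrid}.

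The main (and rather minor) obstacle is justifying that the limit commutes with the infinite double sum. This is routine: the Gaussian weight $q^{2n^2-n-j^2+j}$ (with $1\le j\le|n|$) produces super-exponential decay in $n$ that dominates the bounded factor $\sum_{m=1-j}^{j-1}z^m$ uniformly on any compact neighborhood of $z=1$, so dominated convergence applies. Combining the two evaluations yields \eqref{8hrid}. The real substance of the identity is packaged into Theorem \ref{8th2}; the corollary itself is a one-line specialization.
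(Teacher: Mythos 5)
Your proposal is correct and is exactly the paper's route: the corollary is obtained by letting $z\to 1$ in Theorem \ref{8th2}, using $F_8(1,q)=\mathscr{F}_{8,-1}(q)$, the evaluation $(q,q,q^2;q^2)_\infty=J_1^2/J_2$, and the limit $(z^{1-j}-z^j)/(1-z)\to 2j-1$. The only difference is that you spell out the (routine) justification for interchanging the limit with the double sum, which the paper leaves implicit.
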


\subsection{Hecke-Rogers series of $\mathscr{F}_{12,-1}(q)$}

Unlike $\mathscr{F}_{4,-1}(q)$ and $\mathscr{F}_{8,-1}(q)$, we did not find Eulerian type series of $\mathscr{F}_{12,-1}(q)$ and $\mathscr{F}_{24,-1}(q)$. So we did not find a nice $z$-analog of $\mathscr{F}_{12,-1}(q)$ and mock theta functions $\sigma(q)$. By the 3-dissection of $\mathscr{F}_{4,-1}(q)$, we have the Appell-Lerch series which is similar  to \cite[Eq. (4.8)]{An-Hi-91} with $z=q^3$
$$
\frac{J_3^2}{J_6}\sigma(q)=\sum_{k=-\infty}^{\infty}\frac{(-1)^{k-1}q^{3k^2}}{1-q^{6k-1}}.
$$

\begin{theorem}We have
$$
\frac{J_3^2}{J_6}\mathscr{F}_{12,-1}(q)=\sum_{k=-\infty}^{\infty}\frac{(-1)^{k-1}(6k-1)q^{3k^2}}{1+q^{6k-1}}+\frac{2qJ_{12}J_6J_2^5}{J_4J_1^2}.
$$
\end{theorem}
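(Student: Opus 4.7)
The plan is to apply the Atkin $U_3$ operator to Corollary~3.2, $\frac{J_1^2}{J_2}\mathscr{F}_{4,-1}(q) = \sum_{k\geq 1}\frac{(-1)^{k-1}(2k-1)q^{k^2}}{1+q^{2k-1}}$, exploiting the fact that $U_3(\mathscr{F}_{4,-1}(q)) = \mathscr{F}_{12,-1}(q)$ since $F(12n-1) = F(4\cdot 3n - 1)$.

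To compute $U_3$ on the Lambert series, I would expand $(1+q^{2k-1})^{-1} = \sum_{m\geq 0}(-q^{2k-1})^m$ and examine the exponents $k^2+(2k-1)m \pmod 3$. A residue check rules out $k\equiv 2\pmod 3$ and leaves only $(k,m)\equiv (0,0)$ or $(1,2)\pmod 3$. Parametrising these two families as $k=3a,\ m=3b$ and $k=3a+1,\ m=3b+2$ respectively, resumming the geometric series in $b$, and recombining the two families (the first branch contributes $a\geq 1$ and the second supplies the remaining $a\leq 0$) yields the single Lambert sum $\sum_{a\in\mathbb{Z}}(-1)^{a-1}(6a-1)q^{3a^2}/(1+q^{6a-1})$, which is the first term on the right of the theorem.

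To compute $U_3$ on the eta-quotient side, I would 3-dissect $\phi(-q) = J_1^2/J_2 = \sum_n (-1)^nq^{n^2}$ by splitting $n$ mod $3$; since $n^2\equiv 0\pmod 3$ forces $3\mid n$, the $n\equiv\pm 1$ contributions collapse (via $j\mapsto -j$) into a single theta function, giving
$$\frac{J_1^2}{J_2} = \frac{J_9^2}{J_{18}} - \frac{2qJ_3J_{18}^2}{J_6J_9} = A(q^3) - 2qB(q^3)$$
with $A(q)=J_3^2/J_6$ and $B(q)=J_1J_6^2/(J_2J_3)$. The standard rule $U_3(h(q^3)f(q))=h(q)U_3(f(q))$, together with $U_3(q\mathscr{F}_{4,-1}(q)) = q\mathscr{F}_{12,7}(q)$ where $\mathscr{F}_{12,7}(q):=\sum_nF(12n+7)q^n$, then gives
$$U_3\!\left(\frac{J_1^2}{J_2}\mathscr{F}_{4,-1}(q)\right) = \frac{J_3^2}{J_6}\mathscr{F}_{12,-1}(q) - \frac{2qJ_1J_6^2}{J_2J_3}\mathscr{F}_{12,7}(q).$$

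Equating the two evaluations of $U_3\bigl(\tfrac{J_1^2}{J_2}\mathscr{F}_{4,-1}(q)\bigr)$ reduces the theorem to the single auxiliary identity $\mathscr{F}_{12,7}(q) = J_2^6J_3J_{12}/(J_1^3J_4J_6)$, for then multiplication by $2qJ_1J_6^2/(J_2J_3)$ converts the residual $\mathscr{F}_{12,7}$-term into exactly the claimed $2qJ_{12}J_6J_2^5/(J_4J_1^2)$. This auxiliary identity is the main obstacle: although the full generating function $\sum_nH(n)q^n$ is only mock modular, for the specific progression $n\equiv 7\pmod{12}$ the shadow of Zagier's mock form vanishes, so $\mathscr{F}_{12,7}(q)$ is a genuine weight-$3/2$ holomorphic modular form on a suitable congruence subgroup; its identification with the stated eta-quotient can then be verified by a finite-coefficient comparison using Garvan's ETA-package referenced in \eqref{eta}.
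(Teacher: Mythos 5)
Your proposal is correct and is essentially the paper's own proof: the authors likewise $3$-dissect the identity $\frac{J_1^2}{J_2}\mathscr{F}_{4,-1}(q)=\sum_{k\ge1}\frac{(-1)^{k-1}(2k-1)q^{k^2}}{1+q^{2k-1}}$, carry out the same residue-class analysis to identify the $q^{3n}$-part of the Lambert series as $\sum_{k\in\mathbb{Z}}\frac{(-1)^{k-1}(6k-1)q^{3k^2}}{1+q^{6k-1}}$, use the same dissection $\frac{J_1^2}{J_2}=\frac{J_9^2}{J_{18}}-\frac{2qJ_3J_{18}^2}{J_6J_9}$, and reduce to the same auxiliary eta-quotient evaluation of $\mathscr{F}_{12,7}(q)$. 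The only difference is how that last identity is justified: the paper imports the two eta-quotients for $\sum H(24n+7)q^n$ and $\sum H(24n+19)q^n$ from Chen--Garvan and combines them by a MAPLE eta-quotient check, whereas you propose deriving it directly from the vanishing of the shadow on the progression $7\pmod{12}$ plus a Sturm-bound verification, which is a legitimate alternative.
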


\begin{proof}
We denote the 3-dissection on both sides of \eqref{4cor1} by
$$
\frac{J_1^2}{J_2}=A_0(q^3)+qA_1(q^3)+q^2A_2(q^3),
$$
$$
\mathscr{F}_{4,-1}(q)=B_0(q^3)+qB_1(q^3)+q^2B_2(q^3),
$$
and
$$
\sum_{k=1}^{\infty}\frac{(-1)^{k-1}(2k-1)q^{k^2}}{1+q^{2k-1}}=C_0(q^3)+qC_1(q^3)+q^2C_2(q^3).
$$
Then
\beq
\label{12lid2}
A_0(q)B_0(q)+qA_1(q)B_2(q)+qA_2(q)B_1(q)=C_0(q).
\eeq
We have
$$
B_0(q)=\mathscr{F}_{12,-1}(q),
$$
and we note that
$$
\frac{J_1^2}{J_2}=\sum_{k=-\infty}^{\infty}(-1)^kq^{k^2}=\sum_{k=-\infty}^{\infty}(-1)^kq^{9k^2}-2q\sum_{k=-\infty}^{\infty}(-1)^kq^{9k^2+6k}=\frac{J_9^2}{J_{18}}-2q\frac{J_{18}^2J_3}{J_9J_6}.
$$
Hence
\beq
\label{12lemid24}
A_0(q)=\frac{J_3^2}{J_6},\text{  }A_1(q)=-2\frac{J_{6}^2J_1}{J_3J_2}\text{  and  }A_2(q)=0.
\eeq
By \cite[Eq. (2.7), (2.9)]{Ch-Ga}
\begin{align}
\label{12lmb}
B_2(q)=&\mathscr{F}_{12,7}(q)=\sum_{n=0}^{\infty}H(24n+7)q^{2n}+3q\sum_{n=0}^{\infty}H(24n+19)q^{2n}\\
\nonumber
=&\frac{J_6^2J_4^5}{J_{12}J_2^3}+3q\frac{J_{12}^3J_4}{J_2}=\frac{J_{12}J_3J_2^6}{J_6J_4J_1^3},
\end{align}
where the last equation was verified by MAPLE \eqref{eta}. Since
$$
\sum_{k=1}^{\infty}\frac{(-1)^{k-1}(2k-1)q^{k^2}}{1+q^{2k-1}}=\sum_{k=1}^{\infty}\frac{(-1)^{k-1}(2k-1)q^{k^2}}{1+q^{6k-3}}(1-q^{2k-1}+q^{4k-2}),
$$
we have
\begin{align}
\label{12lmc}
&C_0(q^3)\\
\nonumber
=&\sum_{k=1}^{\infty}\frac{(-1)^{3k-1}(2\cdot(3k)-1)q^{(3k)^2}}{1+q^{6\cdot(3k)-3}}+\sum_{k=0}^{\infty}\frac{(-1)^{3k+1-1}(2\cdot(3k+1)-1)q^{(3k+1)^2+4\cdot(3k+1)-2}}{1+q^{6\cdot(3k+1)-3}}\\
\nonumber
=&\sum_{k=-\infty}^{\infty}\frac{(-1)^{k-1}(6k-1)q^{9k^2}}{1+q^{18k-3}}.
\end{align}
Setting $q\rightarrow q^{1/3}$, it becomes
$$
C_0(q)=\sum_{k=-\infty}^{\infty}\frac{(-1)^{k-1}(6k-1)q^{3k^2}}{1+q^{6k-1}}.
$$
Hence \eqref{12lid2} becomes
$$
\frac{J_3^2}{J_6}\mathscr{F}_{12,-1}(q)-\frac{2qJ_{12}J_6J_2^5}{J_4J_1^2}=\sum_{k=-\infty}^{\infty}\frac{(-1)^{k-1}(6k-1)q^{3k^2}}{1+q^{6k-1}}.
$$
\end{proof}

The following lemmas are needed to prove the Hecke-Rogers identity of $\mathscr{F}_{12,-1}(q)$.

\begin{lemma}
\label{12lem1}
Let
$$
g(z):=\frac{1}{z}j(-z^2;q^2)m(q,q^6,-z^2/q).
$$
Then
$$
g'(1)=-\frac{J_6^{12}J_4^4J_1^3}{J_{12}^6J_3^3J_2^6}.
$$
\end{lemma}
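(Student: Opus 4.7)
Factor $g(z) = h(z) M(z)$ with $h(z) := z^{-1} j(-z^2;q^2)$ and $M(z) := m(q, q^6, -z^2/q)$, so by the product rule $g'(1) = h'(1) M(1) + h(1) M'(1)$. Equation \eqref{pj1}, applied with $q$ replaced by $q^2$, kills the first term, and a direct Jacobi-triple-product computation gives $h(1) = j(-1;q^2) = 2 J_4^2/J_2$. The whole problem therefore reduces to evaluating $M'(1)$.

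To handle $M'(1)$ I would apply Theorem \ref{mthe33} with modulus $q^6$, first argument $x = q$, and reference points $z_1 = -z^2/q$, $z_0 = -1$. The constant $m(q,q^6,-1)$ drops out under $\frac{d}{dz}$, leaving
\[
M'(1) \;=\; \frac{d}{dz}\!\left[\frac{-J_6^3\, j(z^2/q;q^6)\, j(z^2;q^6)}{j(-1;q^6)\, j(-z^2/q;q^6)\, j(-q;q^6)\, j(-z^2;q^6)}\right]_{z=1}.
\]
At $z = 1$ the numerator vanishes because $j(1;q^6) = 0$, so by the quotient rule $M'(1) = N'(1)/D(1)$ with $N,D$ the numerator and denominator respectively. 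A one-line computation (either via the Jacobi series or, equivalently, via the specializations underlying Lemma \ref{pthe1}) gives $\frac{d}{dz} j(z^2;q^6)\big|_{z=1} = -2 J_6^3$; the other factor in $N'(1)$ is simply $j(1/q;q^6)$.

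What remains is bookkeeping with theta constants. Using the quasi-periodicity $j(y/q^6;q^6) = -(y/q^6)\, j(y;q^6)$ one gets $j(1/q;q^6) = -q^{-1} j(q;q^6)$ and $j(-1/q;q^6) = q^{-1} j(-q;q^6)$, and standard eta-quotient reductions yield
\[
j(-1;q^6) = \frac{2 J_{12}^2}{J_6},\qquad j(q;q^6) = \frac{J_1 J_6^2}{J_2 J_3},\qquad j(-q;q^6) = \frac{J_2^2 J_3 J_{12}}{J_1 J_4 J_6}.
\]
Inserting these into $N'(1)/D(1)$ and multiplying by $h(1) = 2 J_4^2/J_2$ collapses to the desired $-J_1^3 J_4^4 J_6^{12}/(J_2^6 J_3^3 J_{12}^6)$.

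The main obstacle is the last step: organizing the product of seven or so $j$-functions so that the eta exponents in the final answer line up exactly. This is mechanical but error-prone; as a sanity check, the final eta quotient can be verified by the ETA package of \eqref{eta}, and the arithmetic of $j(q;q^6) j(-q;q^6) = J_2 J_6 J_{12}/J_4$ (via $(q^2;q^{12})_\infty (q^{10};q^{12})_\infty = J_2 J_{12}/(J_4 J_6)$) gives a useful intermediate check on the cancellation.
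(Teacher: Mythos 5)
Your proposal is correct and follows essentially the same route as the paper: the product rule with \eqref{pj1} killing the $h'(1)$ term, Theorem \ref{mthe33} with $z_0=-1$ to reduce $m(q,q^6,-z^2/q)$ to a constant plus a theta quotient vanishing at $z=1$, and then a quotient-rule/limit evaluation of that quotient's derivative followed by eta-quotient bookkeeping. Your intermediate values ($h(1)=2J_4^2/J_2$, $\frac{d}{dz}j(z^2;q^6)|_{z=1}=-2J_6^3$, and the three $j$-evaluations) all check out and reproduce the paper's intermediate result $\frac{d}{dz}m(q,q^6,-z^2/q)|_{z=1}=-J_6^{12}J_4^2J_1^3/(2J_{12}^6J_3^3J_2^5)$.
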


\begin{proof}
From Theorem \ref{mthe33}
$$
m(q,q^6,-z^2/q)=m(q,q^6,-1)-\frac{J_6^3j(z^2/q;q^6)j(z^2;q^6)}{j(-1;q^6)j(-q;q^6)j(-z^2/q;q^6)j(-z^2;q^6)}.
$$
Let
$$
f(z):=\frac{j(z^2/q;q^6)j(z^2;q^6)}{j(-z^2/q;q^6)j(-z^2;q^6)}.
$$
Then
\begin{align}
\label{12lid1}
f'(1)=&\lim_{z\rightarrow 1}\frac{f(z)-f(1)}{z-1}\\
\nonumber
=&\lim_{z\rightarrow 1}(1+z)(q^6z,q^6/z,q^6;q^6)_\infty\frac{j(z^2/q;q^6)}{j(-z^2/q;q^6)j(-z^2;q^6)}\\
\nonumber
=&\frac{J_6^7J_4J_1^2}{J_{12}^3J_3^2J_2^3}.
\end{align}
We find that
$$
\frac{d}{dz}m(q,q^6,-z^2/q)\bigg|_{z=1}=-\frac{J_6^{12}J_4^2J_1^3}{2J_{12}^6J_3^3J_2^5},
$$
and by \eqref{pj1}
$$
\frac{d}{dz}\left(\frac{1}{z}j(-z^2;q^2)\right)\bigg|_{z=1}=0.
$$
Hence
$$
g'(1)=-j(-1;q^2)\frac{J_6^{12}J_4^2J_1^3}{2J_{12}^6J_3^3J_2^5}=-\frac{J_6^{12}J_4^4J_1^3}{J_{12}^6J_3^3J_2^6}.
$$
\end{proof}

\begin{lemma}
Let
$$
g(z):=\frac{1}{z}j(qz^4;q^2)m(-q^2/z^6,q^6,q^3z^6).
$$
Then
$$
g'(1)=-\frac{J_6J_1^2}{J_3^2J_2}\sum_{k=-\infty}^{\infty}\frac{(-1)^{k-1}(6k-1)q^{3k^2}}{1+q^{6k-1}}.
$$
\end{lemma}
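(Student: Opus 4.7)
The plan is to unfold the Appell--Lerch series $m$ from its defining series in Section~2.1; after that the factor $j(q^3z^6;q^6)$ produced in the denominator of $m$ cancels the corresponding one entering naturally in the product defining $g$. Substituting $(x,q,z)=(-q^2/z^6,q^6,q^3z^6)$ in the definition of $m(x,q,z)$ and collecting exponents (the exponent of $q$ in the numerator becomes $3r(r-1)+3r=3r^2$, and the denominator becomes $1+q^{6r-1}$) yields the clean identity
\[
m(-q^2/z^6,q^6,q^3z^6)=\frac{1}{j(q^3z^6;q^6)}\sum_{r=-\infty}^{\infty}\frac{(-1)^r q^{3r^2}z^{6r}}{1+q^{6r-1}}.
\]
Setting $S(z):=\sum_{r=-\infty}^{\infty}(-1)^r q^{3r^2}z^{6r}/(1+q^{6r-1})$ and $A(z):=j(qz^4;q^2)/[z\,j(q^3z^6;q^6)]$, we obtain the factorization $g(z)=A(z)S(z)$.

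Next I would compute $g'(1)$ by the product rule. The value $A(1)=j(q;q^2)/j(q^3;q^6)=J_1^2 J_6/(J_2 J_3^2)$ is immediate from $j(q;q^2)=J_1^2/J_2$ and $j(q^3;q^6)=J_3^2/J_6$. The main observation is that the two theta factors in $A(z)$ contribute nothing to $A'(1)$: differentiating the series $j(qz^4;q^2)=\sum_n(-1)^nq^{n^2}z^{4n}$ and $j(q^3z^6;q^6)=\sum_n(-1)^nq^{3n^2}z^{6n}$ term by term gives
\[
\frac{d}{dz}j(qz^4;q^2)\bigg|_{z=1}=\sum_{n=-\infty}^{\infty}4n(-1)^n q^{n^2}=0, \qquad \frac{d}{dz}j(q^3z^6;q^6)\bigg|_{z=1}=\sum_{n=-\infty}^{\infty}6n(-1)^n q^{3n^2}=0,
\]
both vanishing by antisymmetry under $n\leftrightarrow -n$, in the same spirit as Lemma~\ref{pthe1}. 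Hence only the explicit factor $z^{-1}$ in $A(z)$ contributes to $A'(1)$, and we find $A'(1)=-A(1)$. The derivative of $S$ at $z=1$ is immediate: $S'(1)=\sum_r 6r(-1)^r q^{3r^2}/(1+q^{6r-1})$.

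Assembling these pieces,
\[
g'(1)=A'(1)S(1)+A(1)S'(1)=A(1)\bigl[S'(1)-S(1)\bigr]=\frac{J_1^2 J_6}{J_2 J_3^2}\sum_{r=-\infty}^{\infty}\frac{(6r-1)(-1)^r q^{3r^2}}{1+q^{6r-1}},
\]
and rewriting $(-1)^r=-(-1)^{r-1}$ gives the stated formula. The whole argument is essentially mechanical once the initial expansion of $m$ is in hand; the only step requiring a small insight is that both theta derivatives at $z=1$ vanish, so I do not anticipate any substantive obstacle.
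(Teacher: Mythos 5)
Your argument is correct and is essentially the paper's own proof: the paper likewise kills the derivative of $j(qz^4;q^2)$ at $z=1$ via \eqref{pj21}, expands $m(-q^2/z^6,q^6,q^3z^6)$ from the definition (where the denominators $1+q^{6r-1}$ are $z$-free and the theta factor $j(q^3z^6;q^6)$ also has vanishing derivative at $z=1$), and combines the $-1$ coming from the prefactor $1/z$ with the $6r$ coming from $z^{6r}$ to produce $6r-1$. The only quibble is your opening remark that $j(q^3z^6;q^6)$ ``cancels'' a corresponding factor in $g$ --- there is no such factor in the definition of $g$ --- but your actual computation correctly retains it inside $A(z)$ and uses only that $A'(1)=-A(1)$, so nothing is affected.
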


\begin{proof}
Since by \eqref{pj21}
$$
\frac{d}{dz}j(qz^4;q^2)\bigg|_{z=1}=0,
$$
we have
\begin{align*}
g'(1)=&j(q;q^2)\frac{d}{dz}\left(\frac{1}{z}m(-q^2/z^6,q^6,q^3z^6)\right)\bigg|_{z=1}\\
=&j(q;q^2)\left(\frac{d}{dz}m(-q^2/z^6,q^6,q^3z^6)\bigg|_{z=1}-m(-q^2,q^6,q^3)\right)\\
=&-\frac{J_6J_1^2}{J_3^2J_2}\sum_{k=-\infty}^{\infty}\frac{(-1)^{k-1}(6k-1)q^{3k^2}}{1+q^{6k-1}}.
\end{align*}
\end{proof}

\begin{lemma}
\label{12lem3}
Let
$$
g(z):=\frac{j(qz^4;q^2)j(-q^4z^4;q^6)j(q^4z^2;q^6)}{zj(-q^5z^2;q^6)j(q^3z^6,q^6)j(-q^5;q^6)j(q^5z^4;q^6)}.
$$
Then
$$
g'(1)=-\frac{J_6^9J_4^4J_1^3}{J_{12}^6J_3^3J_2^6}-\frac{2qJ_{12}J_2^4}{J_6J_4J_3^2}.
$$
\end{lemma}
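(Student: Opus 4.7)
The plan is to compute $g'(1)$ by logarithmic differentiation, exploiting that two of the theta factors have vanishing logarithmic derivative at $z=1$. Write
\[
g(z)=\frac{A_1(z)\,A_2(z)\,A_3(z)}{z\,A_5(z)\,A_6(z)\,A_7\,A_8(z)}
\]
with $A_1(z)=j(qz^4;q^2)$, $A_2(z)=j(-q^4z^4;q^6)$, $A_3(z)=j(q^4z^2;q^6)$, $A_5(z)=j(-q^5z^2;q^6)$, $A_6(z)=j(q^3z^6;q^6)$, $A_7=j(-q^5;q^6)$ (independent of $z$), and $A_8(z)=j(q^5z^4;q^6)$. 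By Lemma \ref{pthe1} applied via the chain rule with $w=z^4$, we immediately get $A_1'(1)=0$. In addition, using the standard Lambert expansion
\[
\frac{x\,j_x'(x;q)}{j(x;q)}=-\sum_{k\geq 0}\frac{xq^k}{1-xq^k}+\sum_{k\geq 1}\frac{q^k/x}{1-q^k/x}
\]
evaluated at $x=q^3$ with $q\to q^6$, both sums index exponents $\equiv 3\pmod 6$ and cancel term by term, so the contribution of $A_6$ also vanishes.

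Next, I would evaluate $g(1)$ directly by writing each $j(q^a;q^b)$ at $z=1$ as an eta-quotient through the Jacobi triple product; a short calculation yields $g(1)=J_1^3J_4^3J_6^3/(J_2^4J_3^3J_{12}^3)$. The four remaining logarithmic derivatives at $z=1$ are each Lambert series over specific residues mod $6$, and summing them, together with the $-1$ coming from the $1/z$ factor, produces a closed form for $g'(1)/g(1)$. The final step is to match this combination, after multiplication by $g(1)$, against the claimed two-term eta expression
\[
-\frac{J_6^9J_4^4J_1^3}{J_{12}^6J_3^3J_2^6}-\frac{2qJ_{12}J_2^4}{J_6J_4J_3^2};
\]
the $2q$-weighted second summand is expected to arise from a $3$-dissection effect analogous to the one producing the $9qJ_9^3J_1/J_3$ piece in Lemma \ref{pthe2} via Lemke Oliver's theorem, and the resulting pure eta-quotient identity can be verified algorithmically by Garvan's MAPLE ETA-package described in Section 2.2.

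The main obstacle is this final eta-quotient identification. A complementary approach, closer in spirit to the proofs of Lemmas \ref{12lem1} and \ref{12lem2}, is first to apply Theorem \ref{mthe33} with $q\to q^6$, $x=-q^2/z^6$, $z_1=q^3z^6$, and $z_0=-q^5z^2$; one verifies, via the theta symmetries $j(x;q^6)=j(q^6/x;q^6)$ and $j(q^{6n}x;q^6)=(-1)^n x^{-n}q^{-3n(n-1)}j(x;q^6)$, that this yields
\[
g(z)=-\frac{j(qz^4;q^2)}{z\,J_6^3}\bigl[m(-q^2/z^6,q^6,q^3z^6)-m(-q^2/z^6,q^6,-q^5z^2)\bigr],
\]
so the first $m$-term plugs directly into Lemma \ref{12lem2} and only the $z=1$ derivative of the second $m$-term remains to be computed. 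This reformulation has the advantage that the Lambert-like piece from Lemma \ref{12lem2} cancels in the combination, leaving a pure eta-quotient identity to verify.
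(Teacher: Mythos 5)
Your primary route is the same as the paper's: logarithmic differentiation of $g$, discarding the factors $j(qz^4;q^2)$ and $j(q^3z^6;q^6)$ because their $z$-derivatives vanish at $z=1$, evaluating $g(1)$, and reducing the four surviving logarithmic derivatives to eta-quotients that MAPLE can check. Your value $g(1)=J_1^3J_4^3J_6^3/(J_2^4J_3^3J_{12}^3)$ is correct, and your Lambert-series cancellation for the $j(q^3z^6;q^6)$ factor is a valid alternative to the paper's appeal to \eqref{pj21} with $q\mapsto q^3$. What is missing is the substantive half of the lemma: the four remaining logarithmic derivatives are weight-one Lambert series and are \emph{not} individually eta-quotients; they become computable only after the specific normalizations $z^{2/3}j(-q^2/z^4;q^6)$, $z^{1/3}j(q^2/z^2;q^6)$, $z^{2/3}j(-q/z^2;q^6)$, $z^{4/3}j(q/z^4;q^6)$ that put them into the special Lemke Oliver forms \eqref{pj31}, \eqref{pj31m}, \eqref{pj61}, \eqref{pj61m}. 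The paper performs exactly this reduction (which is where the $3q^2J_{18}^3/J_6$ term and ultimately the $2q$-weighted summand enter) and only then hands a pure eta-quotient identity to the ETA package \eqref{eta}. Until that reduction is carried out you do not yet have ``a pure eta-quotient identity to verify,'' so the plan is right in outline but the decisive computation is deferred rather than done.

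The ``complementary approach'' at the end does not repair this. Your identity expressing $g$ via Theorem \ref{mthe33} is correct: with $x=-q^2/z^6$, $z_1=q^3z^6$, $z_0=-q^5z^2$ the theta quotient on the right-hand side of Theorem \ref{mthe33} equals $-J_6^3\,z\,g(z)/j(qz^4;q^2)$. But precisely for that reason the strategy is circular: the difference of the $z$-derivatives at $z=1$ of the two $m$-terms \emph{is} $g'(1)$ up to known factors, so ``computing the second $m$-term'' is the original problem restated. Nor can you evaluate $\frac{d}{dz}m(-q^2/z^6,q^6,-q^5z^2)\big|_{z=1}$ directly from the Appell--Lerch definition in the way the preceding (unlabeled) lemma handles $m(-q^2/z^6,q^6,q^3z^6)$: that computation is clean because there $xz_0=-q^5$ is independent of $z$, whereas for $z_0=-q^5z^2$ one has $xz_0=q^7/z^4$, and differentiating the denominators $1-q^{6r-6}xz_0$ produces series with squared denominators that are not eta-quotients. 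Stick with the first route and actually execute the Lemke Oliver reductions.
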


\begin{proof}
Let
$$
g_1(z):=\frac{j(q;q^2)}{j(q^3;q^6)j(-q^5,q^6)}\cdot \frac{j(-z^4q^4;q^6)j(z^2q^4;q^6)}{zj(-z^2q^5;q^6)j(z^4q^5;q^6)}.
$$
Then by \eqref{pj21}
$$
g_1'(1)=g'(1).
$$
Let
\begin{align*}
u_1(z):=&z^{2/3}j(-z^4q^4;q^6)=z^{2/3}j(-q^2/z^4;q^6),\\
u_2(z):=&z^{1/3}j(z^2q^4;q^6)=z^{1/3}j(q^2/z^2;q^6),\\
v_1(z):=&z^{2/3}j(-z^2q^5;q^6)=z^{2/3}j(-q/z^2;q^6),\\
v_2(z):=&z^{4/3}j(z^4q^5;q^6)=z^{4/3}j(q/z^4;q^6).
\end{align*}
By \eqref{pj31}, \eqref{pj31m}, \eqref{pj61} and \eqref{pj61m}, we have
\begin{align*}
g_1'(1)=&g_1(1)\left(\frac{u_1'(1)}{u_1(1)}+\frac{u_2'(1)}{u_2(1)}-\frac{v_1'(1)}{v_1(1)}-\frac{v_2'(1)}{v_2(1)}\right)\\
=&\frac{2J_6J_2^2J_1^3}{3J_{12}^2J_3^3}+\frac{J_6^3J_4^3J_1^3}{J_{12}^3J_3^3J_2^4}\left(\frac{J_2^{3}}{3J_6}+\frac{3q^2J_{18}^3}{J_6}\right)-\frac{2J_6^4J_4^6J_1^6}{3J_{12}^4J_3^4J_2^7}-\frac{4J_6J_4^3J_2^2}{3J_{12}^3J_3^2}.\\
=&-\frac{J_6^9J_4^4J_1^3}{J_{12}^6J_3^3J_2^6}-\frac{2qJ_{12}J_2^4}{J_6J_4J_3^2},
\end{align*}
where the last equation was verified by MAPLE \eqref{eta}.
\end{proof}

Now we can prove the Hecke-Rogers identity.

\begin{theorem}We have
\beq
\label{12th2id}
\frac{J_1^2}{J_2}\mathscr{F}_{12,-1}(q)=\sum_{1-|n|\leq j\leq |n|}sg(n)(-1)^{j-1}(4n-1)q^{4n^2-2n-3j^2+2j}.
\eeq
\end{theorem}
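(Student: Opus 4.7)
The plan is to deduce \eqref{12th2id} from the Appell--Lerch formula for $\mathscr{F}_{12,-1}(q)$ just proved, by specializing Theorem~\ref{mthe111} (the $n=1$ Hickerson--Mortenson identity $f_{1,5,1}=g_{1,5,1}-\Theta_{1,4}$) at a $z$-dependent pair $(x,y)=(x(z),y(z))$ and then differentiating both sides at $z=1$. After substituting, I would use Corollary~\ref{mcor37} and Proposition~\ref{mpro31} to rewrite the base-$q^{24}$ $m$-functions appearing on the right-hand side of Theorem~\ref{mthe111} as $m$-functions in base $q^{6}$; the output should be precisely the three $g(z)$'s whose derivatives at $z=1$ are computed in Lemma~\ref{12lem1}, the unnamed lemma immediately following it, and Lemma~\ref{12lem3}. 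This is why those three derivative lemmas were set up in advance.

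On the left-hand side I would reindex the Hecke--Rogers series
\[
f_{1,5,1}(x,y,q)=\sum_{sg(r)=sg(s)}sg(r)(-1)^{r+s}x^{r}y^{s}q^{r(r-1)/2+5rs+s(s-1)/2}
\]
via the substitution $(r,s)=(n+j-1,\,n-j)$. Under this bijection the cone $sg(r)=sg(s)$ with $r+s$ odd translates onto $\{(n,j):n\neq 0,\ 1-|n|\le j\le|n|\}$, the factor $(-1)^{r+s}$ becomes $-1$, and after an appropriate rescaling of $q$ the quadratic form $r(r-1)/2+5rs+s(s-1)/2$ turns into $4n^{2}-2n-3j^{2}+2j$. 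Differentiating in $z$ at $z=1$ then brings down a linear factor from the $z$-dependence of $x(z)$ and $y(z)$ which, after collecting the contributions from $n$ and $-n$, is exactly $(4n-1)$, yielding the right-hand side of~\eqref{12th2id}.

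On the Appell--Lerch side, the pivotal input is the unnamed lemma, whose value $g'(1)=-\frac{J_{6}J_{1}^{2}}{J_{3}^{2}J_{2}}\sum_{k}(-1)^{k-1}(6k-1)q^{3k^{2}}/(1+q^{6k-1})$, combined with the Appell--Lerch formula for $\mathscr{F}_{12,-1}(q)$ established just above, equals $-\frac{J_{1}^{2}}{J_{2}}\mathscr{F}_{12,-1}(q)+\frac{2qJ_{12}J_{6}^{2}J_{2}^{4}}{J_{4}J_{3}^{2}}$. The remaining contributions from Lemma~\ref{12lem1} and Lemma~\ref{12lem3} are pure eta-quotients; they should cancel against the spurious piece $\frac{2qJ_{12}J_{6}^{2}J_{2}^{4}}{J_{4}J_{3}^{2}}$, an identity verifiable algorithmically by the ETA-package~\eqref{eta}.

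The main obstacle is pinning down the exact substitution $(x(z),y(z))$ and the accompanying $q$-rescaling so that the base-$q^{6}$ reductions of the Theorem~\ref{mthe111} $m$-functions line up with the specific $g(z)$'s recorded in the three lemmas, and in parallel verifying that the bijection $(r,s)\leftrightarrow(n,j)$ produces the correct sign $(-1)^{j-1}$ and the exponent $4n^{2}-2n-3j^{2}+2j$. Once this alignment is in place, the final eta-quotient cancellation on the Appell--Lerch side reduces to a routine MAPLE check.
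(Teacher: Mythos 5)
Your overall architecture matches the paper's: realize the right-hand side of \eqref{12th2id} as $f'(1)$ for a $z$-deformed Hecke-type double sum, convert to Appell--Lerch sums by a Hickerson--Mortenson structure theorem, shift the $z$-parameters with Theorem \ref{mthe33} so that the three derivative lemmas (Lemma \ref{12lem1}, the unnamed lemma following it, and Lemma \ref{12lem3}) apply, and settle the residual eta-quotients by the MAPLE package \eqref{eta}; you also correctly identified that the unnamed lemma carries the main Appell--Lerch term. But the central technical choice is wrong: the building block here is $f_{1,2,1}$ via Theorem \ref{mthe16}, not $f_{1,5,1}$ via Theorem \ref{mthe111}. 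The paper takes $f(z)=qz^3f_{1,2,1}(q^3z^4,-q^4z^2,q^2)$ and verifies that $f'(1)=\sum_{sg(r)=sg(s)}(-1)^r(2s+4r+3)q^{(2r+1)s+(r+s+1)^2}$ equals the right-hand side of \eqref{12th2id}; in particular the sign $(-1)^{j-1}$ arises as $(-1)^r$ with $j=r+1$, which your bookkeeping (a single overall factor $-1$ from $(-1)^{r+s}$) cannot produce.

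Your substitution fails concretely on the quadratic form. Although the cone $sg(r)=sg(s)$ with $r+s$ odd does map onto $\{(n,j):n\neq 0,\ 1-|n|\le j\le |n|\}$ under $(r,s)=(n+j-1,n-j)$, the quadratic part of $r(r-1)/2+5rs+s(s-1)/2$ becomes $\tfrac{3}{2}(2n-1)^2-(2j-1)^2=6n^2-4j^2+(\text{linear terms})$, so after $q\mapsto q^{\lambda}$ you would need $6\lambda=4$ and $4\lambda=3$ simultaneously; no rescaling yields $4n^2-3j^2$. Invariantly, $f_{1,5,1}$ has $b^2-ac=24$ and produces base-$q^{24}$ Appell--Lerch sums plus the correction $\Theta_{1,4}$ --- that is the machinery the paper reserves for $\mathscr{F}_{24,-1}(q)$ --- whereas the form $4n^2-2n-3j^2+2j$ belongs to $f_{1,2,1}$ in base $q^2$ ($b^2-ac=3$, whence the functions $m(\,\cdot\,,q^6,\,\cdot\,)$ appearing in Lemmas \ref{12lem1}--\ref{12lem3}). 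The proposed repair via Corollary \ref{mcor37} also does not work: that corollary expresses a base-$q$ Appell--Lerch sum in terms of base-$q^4$ ones, the opposite direction from the reduction $q^{24}\to q^{6}$ you would need, and in the correct proof no change of base occurs at all --- only Theorem \ref{mthe33} is invoked, to replace $m(-q^2/z^6,q^6,-q^5z^2)$ by $m(-q^2/z^6,q^6,q^3z^6)$ plus the theta quotient treated in Lemma \ref{12lem3}. With the building block corrected, the rest of your outline (including the final eta-quotient cancellation against the extra term $2qJ_{12}J_6J_2^5/(J_4J_1^2)$ from the Appell--Lerch formula for $\mathscr{F}_{12,-1}(q)$) goes through as you describe.
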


\begin{proof}
Let
$$
f(z):=qz^3f_{1,2,1}(q^3z^4,-q^4z^2,q^2).
$$
The right-hand side of \eqref{12th2id} is
\begin{align*}
&\sum_{1-|n|\leq j\leq |n|}sg(n)(-1)^{j-1}(4n-1)q^{4n^2-2n-3j^2+2j}\\
=&\sum_{sg(r)=sg(s)}(-1)^r(2s+4r+3)q^{(2r+1)s+(r+s+1)^2}\\
=&f'(1).
\end{align*}
Hence \eqref{12th2id} is equivalent to
\beq
\label{12thid1}
\frac{J_2}{J_1^2}f'(1)=\frac{J_6}{J_3^2}\sum_{k=-\infty}^{\infty}\frac{(-1)^{k-1}(6k-1)q^{3k^2}}{1+q^{6k-1}}+\frac{2qJ_{12}J_6^2J_2^5}{J_4J_3^2J_1^2}.
\eeq
By Theorem \ref{mthe16}
\beq
\label{12thid2}
f(z)=\frac{1}{z}(j(-z^2;q^2)m(q,q^6,-z^2/q)-j(qz^4;q^2)m(-q^2/z^6,q^6,-q^5z^2)).
\eeq
In addition, by Theorem \ref{mthe33}
\beq
\label{12thid3}
m(-q^2/z^6,q^6,-q^5z^2)=m(-q^2/z^6,q^6,q^3z^6)+\frac{J_6^3j(-q^4z^4;q^6)j(q^4z^2;q^6)}{j(-q^5z^2;q^6)j(q^3z^6;q^6)j(-q^5q^6)j(q^5z^4;q^6)}.
\eeq
\eqref{12thid1} holds by \eqref{12thid2} - \eqref{12thid3} and Lemma \ref{12lem1} - Lemma \ref{12lem3}.
\end{proof}

We rewrite \cite[Eq. (4.2)]{An-Hi-91} as
\beq
\label{12si}
\frac{J_1^2}{J_2}\sigma(q)=\sum_{1-|n|\leq j\leq |n|}sg(n)(-1)^{j-1}q^{4n^2-2n-3j^2+2j}.
\eeq
\eqref{12th2id} and \eqref{12si}  imply that
$$
\mathscr{F}_{12,-1}(q)\equiv -\sigma(q) \pmod 4,
$$
which is equivalent to \cite[Lamma 3.11]{Ch-Ga}.

\subsection{Hecke-Rogers series of $\mathscr{F}_{24,-1}(q)$}

We first prove the Appell-Lerch series of $\mathscr{F}_{24,-1}(q)$ which is more complex than $\mathscr{F}_{12,-1}(q)$.
\begin{theorem}
\label{24th1}
$$
\frac{J_6^2}{J_{12}}\mathscr{F}_{24,-1}(q)=\sum_{k=-\infty}^{\infty}\frac{(-1)^{k-1}(12k-1)q^{6k^2}}{1+q^{12k-1}}+\sum_{k=-\infty}^{\infty}\frac{(-1)^{k-1}(12k-7)q^{6k^2-2}}{1+q^{12k-7}}+\frac{2qJ_{12}^2J_3^2J_2^6}{J_6^2J_4J_1^3}.
$$
\end{theorem}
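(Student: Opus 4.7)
The plan is to follow the same strategy used for the Appell--Lerch identity of $\mathscr{F}_{12,-1}(q)$, but replacing the 3-dissection of Corollary \ref{4cor1} by a 6-dissection. Writing
\[
\frac{J_1^2}{J_2}=\sum_{r=0}^{5}q^{r}A_{r}(q^{6}),\qquad \mathscr{F}_{4,-1}(q)=\sum_{r=0}^{5}q^{r}B_{r}(q^{6}),\qquad \sum_{k=1}^{\infty}\frac{(-1)^{k-1}(2k-1)q^{k^{2}}}{1+q^{2k-1}}=\sum_{r=0}^{5}q^{r}C_{r}(q^{6}),
\]
we have $B_{r}(q)=\mathscr{F}_{24,4r-1}(q)$, so in particular $B_{0}(q)=\mathscr{F}_{24,-1}(q)$. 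Extracting the $q^{0}\pmod{q^{6}}$ part of Corollary \ref{4cor1} and substituting $q^{6}\mapsto q$ yields the master identity
\[
A_{0}(q)\,\mathscr{F}_{24,-1}(q)+q\bigl(A_{1}B_{5}+A_{3}B_{3}+A_{4}B_{2}\bigr)(q)=C_{0}(q),
\]
where the vanishing $A_{2}=A_{5}=0$ is automatic since $k^{2}\bmod 6\in\{0,1,3,4\}$.

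Each $A_{r}$ for $r\in\{0,1,3,4\}$ is a classical Jacobi theta function that I rewrite as an eta-quotient via the triple product; for instance $A_{0}(q)=\varphi(q^{6})=J_{12}^{5}/(J_{6}^{2}J_{24}^{2})$ and $A_{3}(q)=-2qJ_{24}^{2}/J_{12}$. The nontrivial $B_{r}$ needed are themselves Hurwitz-class-number generating functions at specific residues modulo $24$, and their eta-quotient representations follow from \cite{Ch-Ga} (in analogy with the appeal to \cite[Eqs.~(2.7),~(2.9)]{Ch-Ga} in the $\mathscr{F}_{12,-1}$ proof).

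For the right-hand side, I expand $1/(1+q^{2k-1})=\sum_{j\ge 0}(-1)^{j}q^{(2k-1)j}$ and look for pairs $(k,j)$ with $k^{2}+(2k-1)j\equiv 0\pmod 6$. A short residue analysis shows that $k\equiv 2,5\pmod 6$ contributes nothing, while $k\equiv 0,1,3,4\pmod 6$ each pins down a unique $j\pmod 6$, yielding four geometric tails. After $q^{6}\mapsto q$, using the substitution $k\mapsto -k$ together with the identity $\tfrac{1}{1-q^{a}}=\tfrac{1}{1+q^{a}}+\tfrac{2q^{a}}{1-q^{2a}}$ to convert $1-q^{\cdot}$ denominators into $1+q^{\cdot}$ ones, the $k\equiv 0$ and $k\equiv 1$ tails merge into $\sum_{k\in\mathbb{Z}}\frac{(-1)^{k-1}(12k-1)q^{6k^{2}}}{1+q^{12k-1}}$, and the $k\equiv 3$ and $k\equiv 4$ tails merge into $\sum_{k\in\mathbb{Z}}\frac{(-1)^{k-1}(12k-7)q^{6k^{2}-2}}{1+q^{12k-7}}$, producing exactly the two sums in the theorem; the auxiliary $\tfrac{2q^{a}}{1-q^{2a}}$ pieces have to cancel pairwise across the four cases.

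The hard part will be the final eta-quotient verification: after dividing the master identity by $A_{0}(q)$ and multiplying by $J_{6}^{2}/J_{12}$, the remainder coming from $-q(A_{1}B_{5}+A_{3}B_{3}+A_{4}B_{2})\cdot (J_{6}^{2}/J_{12})/A_{0}(q)$ must collapse to exactly $\dfrac{2qJ_{12}^{2}J_{3}^{2}J_{2}^{6}}{J_{6}^{2}J_{4}J_{1}^{3}}$. Since every ingredient is an eta-quotient, this reduces to a single modular-function identity that can be verified algorithmically by Garvan's ETA-package \eqref{eta}, exactly as for the analogous eta-quotient step in the $\mathscr{F}_{12,-1}$ proof.
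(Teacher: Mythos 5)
Your overall strategy --- dissect a known Appell--Lerch identity for a Hurwitz generating function and isolate the residue class containing $\mathscr{F}_{24,-1}$ --- is exactly the paper's, but your choice of \emph{which} identity to dissect breaks the argument. The paper $3$-dissects the $\mathscr{F}_{8,-1}$ identity \eqref{8ALid}, not ($6$-dissects) Corollary \eqref{4cor1}, and the difference is not cosmetic. In \eqref{8ALid} the theta factor is $J_2^2/J_4=\sum_k(-1)^kq^{2k^2}$, whose $0\pmod 3$ part is $\sum_m(-1)^{3m}q^{18m^2}=\sum_m(-1)^mq^{18m^2}$, i.e.\ $A_0(q)=J_6^2/J_{12}$ --- precisely the factor in the theorem --- and likewise $(-1)^{3m-1}=(-1)^{m-1}$ and $(-1)^{3j}=(-1)^j$ preserve both the alternating coefficient and the $1+q^{12k-1}$ denominators in $C_0$. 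In your $6$-dissection of $J_1^2/J_2=\sum_k(-1)^kq^{k^2}$, the $0\pmod 6$ part is $\sum_mq^{36m^2}$, so your $A_0(q)=J_{12}^5/(J_6^2J_{24}^2)$, not $J_6^2/J_{12}$; and within each class $k\equiv 0,1,3,4\pmod 6$ the sign $(-1)^{k-1+j}$ is \emph{constant} ($(-1)^{6m-1}=-1$, $(-1)^{6j'}=+1$), so your merged tails come out as $-\sum_{m\in\mathbb{Z}}(12m-1)q^{6m^2}/(1-q^{12m-1})$ and its companion: no alternating $(-1)^{m-1}$ and denominators $1-q^{\,\cdot}$. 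These are genuinely different series from the ones in the theorem (compare the $m=0$ terms, $-q/(1-q)$ versus $q/(1+q)$; your $C_0$ begins $-q-q^2-\cdots$ while the theorem's right side minus the eta-quotient begins $q-q^2+q^3+4q^4+\cdots$), and the identity $\frac{1}{1-q^a}=\frac{1}{1+q^a}+\frac{2q^a}{1-q^{2a}}$ cannot manufacture the missing $(-1)^{m-1}$.

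Consequently the last step fails: ``dividing by $A_0$ and multiplying by $J_6^2/J_{12}$'' multiplies the Lambert-series side by the nonconstant ratio $J_6^4J_{24}^2/J_{12}^6$, and since those Lambert sums are not eta-quotients (they are the mock part of the identity), the discrepancy cannot be absorbed into a single ETA-package verification. To repair the proof, start instead from \eqref{8ALid} and $3$-dissect, exactly as in the $\mathscr{F}_{12,-1}$ case; you will then need only the single eta-quotient evaluation $B_1(q)=\mathscr{F}_{24,7}(q)=J_3^2J_2^5/(J_6J_1^3)$ (since $A_1=0$ there), rather than closed forms for $\mathscr{F}_{24,11}$ and $\mathscr{F}_{24,19}$, the first of which is not obviously an eta-quotient.
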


\begin{proof}
We denote the 3-dissection on both sides of \eqref{8ALid} by
$$
\frac{J_2^2}{J_4}=A_0(q^3)+qA_1(q^3)+q^2A_2(q^3),
$$
$$
\mathscr{F}_{8,-1}(q)=B_0(q^3)+qB_1(q^3)+q^2B_2(q^3),
$$
and
$$
\sum_{k=-\infty}^{\infty}\frac{(-1)^{k-1}(4k-1)q^{2k^2}}{1+q^{4k-1}}=C_0(q^3)+qC_1(q^3)+q^2C_2(q^3).
$$
We obtain that
\beq
\label{24th1id1}
A_0(q)B_0(q)+qA_2(q)B_1(q)+qA_1(q)B_2(q)=C_0(q).
\eeq
Similar to \eqref{12lemid24}-\eqref{12lmc} we have
\begin{align}
\label{24lma}
A_0(q)&=\frac{J_6^2}{J_{12}},\\
A_1(q)&=0,\\
A_2(q)&=\frac{-2J_{12}^2J_2}{J_6J_4},\\
B_0(q)&=\mathscr{F}_{24,-1}(q),\\ B_1(q)&=\mathscr{F}_{24,7}(q)=\sum_{n=1}^{\infty}H(24n+7)q^n=\frac{J_3^2J_2^5}{J_6J_1^3},\\
\label{24lmc}
C_0(q)&=\sum_{k=-\infty}^{\infty}\frac{(-1)^{k-1}(12k-1)q^{6k^2}}{1+q^{12k-1}}+\sum_{k=-\infty}^{\infty}\frac{(-1)^{k-1}(12k-7)q^{6k^2-2}}{1+q^{12k-7}}.
\end{align}
We complete the proof by substituting \eqref{24lma} - \eqref{24lmc} into \eqref{24th1id1}.
\end{proof}

The following lemma can be proved similar to Lemma \ref{12lem3}. We use \eqref{pj21} - \eqref{pj1} to calculate the sum of  each term with the  eta-quotients, and  then use MAPLE \eqref{eta} to verify the eta-quotients identity. We omit the proofs.

\begin{lemma}
\label{24lmt}
Let
\begin{align*}
g(z):=&\frac{z^5J_{12}J_4^2j(-qz^4;q^{12})}{qJ_8J_6j(-z^8;q^{12})j(-q^4z^8;q^{12})}\left(\frac{J_{12}^2J_8}{J_{24}^2J_4}j(q^{14}z^8;q^{24})^2-\frac{q^2J_{24}^2J_6J_4^2}{J_{12}^2J_8J_2}j(q^8z^8;q^{12})\right)\\
-&\frac{zJ_{12}^3j(qz^4;q^2)j(q^5z^8;q^{12})}{j(qz^4;q^{12})j(q^6z^{12};q^{12})}\left(\frac{j(-q^2z^4;q^{12})}{j(-q^4z^8;q^{12})j(-q^{11};q^{12})}+\frac{z^4j(-q^6z^4;q^{12})}{qj(-z^8;q^{12})j(-q^5;q^{12})}\right).
\end{align*}
Then
$$
g'(1)=-\frac{2qJ_{12}^3J_3^2J_2^5}{J_6^4J_4J_1}.
$$
\end{lemma}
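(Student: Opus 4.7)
The plan is to mimic the proof of Lemma \ref{12lem3}. First I would decompose
\[
g(z)=g_{1a}(z)-g_{1b}(z)-g_{2a}(z)-g_{2b}(z),
\]
where $g_{1a}$ and $g_{1b}$ are the two summands inside the first bracket of $g(z)$, each paired with the common prefactor $\tfrac{z^{5}J_{12}J_{4}^{2}\,j(-qz^{4};q^{12})}{q J_{8}J_{6}\,j(-z^{8};q^{12})\,j(-q^{4}z^{8};q^{12})}$, and $g_{2a}$, $g_{2b}$ are the two summands inside the second bracket, each paired with $\tfrac{z J_{12}^{3}\,j(qz^{4};q^{2})\,j(q^{5}z^{8};q^{12})}{j(qz^{4};q^{12})\,j(q^{6}z^{12};q^{12})}$. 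Each $g_\alpha(z)$ is then a product of a monomial $z^{k_\alpha}$, a list of theta factors $j(\pm z^{a}q^{b};q^{c})$, and an eta-quotient in $q$ alone.

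For each piece I would apply the logarithmic-derivative rule at $z=1$:
\[
g_\alpha'(1)=g_\alpha(1)\left(k_\alpha+\sum_i\frac{p_{\alpha,i}'(1)}{p_{\alpha,i}(1)}\right),
\]
where $p_{\alpha,i}(z)$ runs over the $z$-dependent theta factors of $g_\alpha$ (with the squared factor $j(q^{14}z^{8};q^{24})^{2}$ contributing twice). Each logarithmic derivative is evaluated by one of \eqref{pj21}--\eqref{pj121}: after absorbing a suitable power of $z$ into the factor and invoking the chain rule to match the templates of Lemmas \ref{pthe1}, \ref{pthe2}, and the subsequent derivative lemmas, the symmetries $j(x;q)=j(q/x;q)$ and $j(qx;q)=-x^{-1}j(x;q)$ are used to rewrite awkward factors such as $j(-z^{8};q^{12})$ or $j(q^{14}z^{8};q^{24})$ into the standard form. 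Factors that reduce to the $j(\pm zq;q^{2})$ template contribute $0$ by \eqref{pj21}, and factors that reduce to the $j(-z^{2};q)$ template contribute $0$ by \eqref{pj1}.

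Summing the four contributions yields an identity whose left-hand side is an explicit linear combination of eta-quotients in $q$, and whose right-hand side is the claimed $-\tfrac{2q J_{12}^{3}J_{3}^{2}J_{2}^{5}}{J_{6}^{4}J_{4}J_{1}}$. This purely $q$-dependent eta-quotient identity is then verified algorithmically by Garvan's ETA package \eqref{eta}, exactly as was done at the end of the proof of Lemma \ref{12lem3} and for \eqref{12lmb}.

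The principal obstacle is purely clerical: there are many theta factors, several appearing simultaneously in numerators and denominators, so the derivative table is long, and the resulting eta-quotient must be collected and simplified carefully before being handed off to MAPLE. No new conceptual ingredient beyond what was used in Lemma \ref{12lem3} is required.
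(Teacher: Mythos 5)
Your proposal is correct and is exactly the route the paper takes: the authors state that Lemma \ref{24lmt} "can be proved similar to Lemma \ref{12lem3}" by using \eqref{pj21}--\eqref{pj1} to compute the logarithmic derivatives of each theta factor at $z=1$ and then verifying the resulting pure eta-quotient identity with the MAPLE package \eqref{eta}, omitting the (lengthy but routine) details. Your term-by-term decomposition and use of $j(x;q)=j(q/x;q)$ with fractional substitutions to match the derivative templates is precisely the bookkeeping the paper leaves to the reader.
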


\begin{theorem} The Hecke-Rogers series of $\mathscr{F}_{24,-1}(q)$ is
\beq
\label{24mid}
\frac{J_1^2}{J_2}\mathscr{F}_{24,-1}(q)=\sum_{1-|n|\leq j\leq |n|}sg(n)(-1)^{n-1}(4j-1)q^{3n^2-n-2j^2+j}.
\eeq
\end{theorem}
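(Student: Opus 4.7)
The approach parallels the proof of the companion identity \eqref{12th2id}. First, I would express the right side of \eqref{24mid} as $f'(1)$ for an auxiliary function
\[
f(z)=q\,z^{e}\,f_{1,5,1}\!\left(q^{\alpha}z^{\beta},\,\pm q^{\gamma}z^{\delta},\,q^{2}\right),
\]
with exponents chosen so that an index substitution in the Hecke double sum $\sum_{sg(r)=sg(s)}$ recovers the sign $sg(n)(-1)^{n-1}(4j-1)$ and quadratic form $3n^{2}-n-2j^{2}+j$. This mirrors the choice $f(z)=qz^{3}f_{1,2,1}(q^{3}z^{4},-q^{4}z^{2},q^{2})$ used in the proof of \eqref{12th2id}.

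Second, I would apply Theorem \ref{mthe111} (the analogue of Theorem \ref{mthe16}, now with a nontrivial correction theta $\Theta_{1,4}$) to write
\[
f(z)=q\,z^{e}\bigl(g_{1,5,1}(x,y,q^{2},y/x,x/y)-\Theta_{1,4}(x,y,q^{2})\bigr),
\]
so that $f'(1)$ splits into a $g_{1,5,1}$-piece and a $-\Theta_{1,4}'(1)$-piece. For the $g_{1,5,1}$-piece, Proposition \ref{mpro31} and Theorem \ref{mthe33} would be used to shift the third argument of each Appell-Lerch block $m(\cdot,q^{24},\cdot)$ so that at $z=1$ it yields exactly the two Appell-Lerch sums $\sum_{k}(-1)^{k-1}(12k-1)q^{6k^{2}}/(1+q^{12k-1})$ and $\sum_{k}(-1)^{k-1}(12k-7)q^{6k^{2}-2}/(1+q^{12k-7})$ appearing in Theorem \ref{24th1}, while the accompanying theta factors $j(\cdot;q^{24})$ are differentiated via the identities \eqref{pj21}--\eqref{pj121} of Section 2.3. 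The $-\Theta_{1,4}'(1)$-piece is precisely what Lemma \ref{24lmt} evaluates, yielding the eta-quotient $-\tfrac{2qJ_{12}^{3}J_{3}^{2}J_{2}^{5}}{J_{6}^{4}J_{4}J_{1}}$.

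Third, I would combine the two contributions, multiply through by $J_{2}/J_{1}^{2}$ to match the normalization on the left of \eqref{24mid}, and compare with Theorem \ref{24th1}: the Appell-Lerch sums pair up directly, while the various eta-quotients produced by the theta derivatives in the $g_{1,5,1}$-piece, together with Lemma \ref{24lmt}'s contribution, must collapse to the single eta-quotient $\tfrac{2qJ_{12}^{2}J_{3}^{2}J_{2}^{6}}{J_{6}^{2}J_{4}J_{1}^{3}}$ on the right of Theorem \ref{24th1}. Any residual identities in this matching step are verified algorithmically via Garvan's MAPLE package \eqref{eta}.

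The principal obstacle is precisely this last collapse: the $z$-derivatives of the many theta factors appearing in $g_{1,5,1}$ and in $\Theta_{1,4}$ produce a sizable tangle of eta-quotient terms, and showing that they consolidate to the clean eta-quotient predicted by Theorem \ref{24th1} is the delicate computational heart of the proof, ultimately resting on MAPLE-assisted verification of the requisite eta-quotient identities.
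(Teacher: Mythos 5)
Your overall strategy is the paper's: realize the right-hand side as the derivative at $z=1$ of an $f_{1,5,1}$ built from Theorem \ref{mthe111}, shift the $m$-function arguments with Theorem \ref{mthe33}, evaluate the theta-correction via Lemma \ref{24lmt}, and match against Theorem \ref{24th1} with MAPLE handling the residual eta-quotient identities. But there is a concrete gap at your very first step. No function of the form $f_{1,5,1}(q^{\alpha}z^{\beta},\pm q^{\gamma}z^{\delta},q^{2})$ with real monomial arguments can have the quadratic form $3n^{2}-n-2j^{2}+j$ as its exponent under an integral change of variables: the Hecke form of $f_{1,5,1}(x,y,Q)$ with $Q=q^{t}$ has discriminant $24t^{2}$, while $3n^{2}-2j^{2}$ has discriminant $24$, forcing $t=1$; and even then the required substitution has half-integer coefficients, so it only makes sense on a parity-restricted sublattice ($r+s\equiv 1\pmod 2$), which a single real-argument $f_{1,5,1}$ cannot see. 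The paper's device is essential here: it sets $q=q_{1}^{4}$, takes $f(z)=\frac{q_{1}}{z}f_{1,5,1}(iz^{2}q_{1}^{7},\,iq_{1}^{3}/z^{2},\,q_{1}^{2})$ with \emph{imaginary} arguments, and extracts $\IM(f'(1))$; the factor $(-i)^{r+s}$ then kills the even-parity terms and enforces the restriction. Your template, as written, has no mechanism for this and the index substitution you hope for does not exist.

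Two smaller calibration points. First, the "sizable tangle" you anticipate from differentiating the theta factors in the $g_{1,5,1}$-piece largely does not materialize: after taking imaginary parts that piece carries the single factor $j(qz^{4};q^{2})$, whose $z$-derivative vanishes at $z=1$ by \eqref{pj21}, so $g_{1}'(1)$ cleanly produces the two Appell--Lerch sums of Theorem \ref{24th1} with prefactor $J_{1}^{2}J_{12}/(J_{6}^{2}J_{2})$. Second, Lemma \ref{24lmt} in the paper evaluates not the $\Theta_{1,4}$-piece alone but $\Theta_{1,4}$ \emph{combined with} the correction terms generated by the Theorem \ref{mthe33} shifts of the two $m$-functions; these are bundled into a single function $g_{2}$ before differentiating. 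With the correct $f(z)$ and this bookkeeping, the rest of your outline goes through as in the paper.
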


\begin{proof}
Let
$$
f(z):=\frac{q_1}{z}f_{1,5,1}(iz^2q_1^7,iq_1^3/z^2,q_1^2).
$$
Letting $q=q_1^4$, the right-hand side of \eqref{24mid} is
\begin{align*}
&\sum_{1-|n|\leq j\leq |n|}sg(n)(-1)^{n-1}(4j-1)q_1^{12n^2-4n-8j^2+4j}\\
=&\sum_{\substack{sg(r)=sg(s) \\r+s\equiv 1\pmod 2}}sg(r)(-1)^r(2r-2s-1)q_1^{r^2+6r+10rs+s^2+2s+1}\\
=&\IM\left(\sum_{sg(r)=sg(s)}sg(s)(-i)^{r+s}(2r-2s-1)q_1^{r^2+6r+10rs+s^2+2s+1}\right)\\
=&\IM(f'(1)),
\end{align*}
where $\IM(z)$ denotes the imaginary part of $z$. Then by Theorem \ref{mthe111} we have
$$
\IM(f(z))=\frac{q_1}{z}\IM\left(g_{1,5,1}(iq_1^7z^2,iq_1^3/z^2,q_1^2,1/q_1^4z^4,q_1^4z^4)-\Theta_{1,4}(iq_1^7z^2,iq_1^3/z^2,q_1^2)\right),
$$
where it is easy to calculate that
\begin{align*}
&\frac{q_1}{z}\IM(g_{1,5,1}(iq_1^7z^2,iq_1^3/z^2,q_1^2,1/q_1^4z^4,q_1^4z^4))\\
=&zj(qz^4;q^2)\left(m(-q^5z^{12},q^{12},1/qz^4)-\frac{1}{q^2z^8}m(-1/qz^{12},q^{12},qz^4)\right),
\end{align*}
and
\begin{align*}
&\frac{q_1}{z}\IM(\Theta_{1,4}(iq_1^7z^2,iq_1^3/z^2,q_1^2))\\
=&\frac{z^5J_{12}J_4^2j(-qz^4;q^{12})}{qJ_8J_6j(-z^8;q^{12})j(-q^4z^8;q^{12})}\left(\frac{J_{12}^2J_8}{J_{24}^2J_4}j(q^{14}z^8;q^{24})^2-\frac{q^2J_{24}^2J_6J_4^2}{J_{12}^2J_8J_2}j(q^8z^8;q^{12})\right).
\end{align*}
Hence by Theorem \ref{24th1}, \eqref{24mid} is equivalent to
\begin{align}
\label{24mid1}
&\frac{J_1^2J_{12}}{J_6^2J_2}\left(\sum_{k=-\infty}^{\infty}\frac{(-1)^{k-1}(12k-1)q^{6k^2}}{1+q^{12k-1}}+\sum_{k=-\infty}^{\infty}\frac{(-1)^{k-1}(12k-7)q^{6k^2-2}}{1+q^{12k-7}}+\frac{2qJ_{12}^2J_3^2J_2^6}{J_6^2J_4J_1^3}\right)\\
\nonumber
=&g'(1),
\end{align}
where
\begin{align*}
g(z):=&\IM(f(z))\\
=&zj(qz^4;q^2)\left(m(-q^5z^{12},q^{12},1/qz^4)-\frac{1}{q^2z^8}m(-1/qz^{12},q^{12},qz^4)\right)\\
-&\frac{z^5J_{12}J_4^2j(-qz^4;q^{12})}{qJ_8J_6j(-z^8;q^{12})j(-q^4z^8;q^{12})}\left(\frac{J_{12}^2J_8}{J_{24}^2J_4}j(q^{14}z^8;q^{24})^2-\frac{q^2J_{24}^2J_6J_4^2}{J_{12}^2J_8J_2}j(q^8z^8;q^{12})\right).
\end{align*}
By Theorem \ref{mthe33}
\begin{align*}
m(-q^5z^{12},q^{12},1/qz^4)=&m(-q^5z^{12},q^{12},q^6/z^{12})\\
+&\frac{J_{12}^3j(q^5z^8;q^{12})j(-q^2z^4;q^{12})}{j(qz^4;q^{12})j(q^6z^{12};q^{12})j(-q^4z^8;q^{12})j(-q^{11};q^{12})},
\end{align*}
and
\begin{align*}
m(-1/qz^{12},q^{12},qz^4)=&m(-1/qz^{12},q^{12},q^6z^{12})\\
-&\frac{qz^{12}J_{12}^3j(q^5z^8;q^{12})j(-q^6z^4;q^{12})}{j(qz^4;q^{12})j(q^6z^{12};q^{12})j(-z^8;q^{12})j(-q^5;q^{12})}.
\end{align*}
Hence $g(z)=g_1(z)-g_2(z)$ where
$$
g_1(z):=zj(qz^4;q^2)\left(m(-q^5z^{12},q^{12},q^6/z^{12})-\frac{1}{q^2z^8}m(-1/qz^{12},q^{12},q^6z^{12})\right),
$$
and
\begin{align*}
g_2(z):=&\frac{z^5J_{12}J_4^2j(-qz^4;q^{12})}{qJ_8J_6j(-z^8;q^{12})j(-q^4z^8;q^{12})}\left(\frac{J_{12}^2J_8}{J_{24}^2J_4}j(q^{14}z^8;q^{24})^2-\frac{q^2J_{24}^2J_6J_4^2}{J_{12}^2J_8J_2}j(q^8z^8;q^{12})\right)\\
-&\frac{zJ_{12}^3j(qz^4;q^2)j(q^5z^8;q^{12})}{j(qz^4;q^{12})j(q^6z^{12};q^{12})}\left(\frac{j(-q^2z^4;q^{12})}{j(-q^4z^8;q^{12})j(-q^{11};q^{12})}+\frac{z^4j(-q^6z^4;q^{12})}{qj(-z^8;q^{12})j(-q^5;q^{12})}\right).
\end{align*}
Since by \eqref{pj21}
$$
\frac{d}{dz}j(qz;q^2)\bigg|_{z=1}=0,
$$
and  definition of $m(x,q,z)$ we have
$$
g_1'(1)=\frac{J_1^2J_{12}}{J_6^2J_2}\left(\sum_{k=-\infty}^{\infty}\frac{(-1)^{k-1}(12k-1)q^{6k^2}}{1+q^{12k-1}}
+\sum_{k=-\infty}^{\infty}\frac{(-1)^{k-1}(12k-7)q^{6k^2-2}}{1+q^{12k-7}}\right).
$$
By Lemma \ref{24lmt} we have
$$
g_2'(1)=-\frac{2qJ_{12}^3J_3^2J_2^5}{J_6^4J_4J_1}.
$$
Then \eqref{24mid1} holds.
\end{proof}

\eqref{24mid} is very similar to \cite[Eq. (2.1)]{Be-Ch-07}
$$
\frac{J_1^2}{J_2}\phi_{-}(q)=\sum_{1-|n|\leq j\leq |n|}sg(n)(-1)^{n-1}q^{3n^2-n-2j^2+j},
$$
which easily implies that \cite[Lemma 3.8]{Ch-Ga}
$$
\mathscr{F}_{24,-1}(q)\equiv -\phi_{-}(q)\pmod 4.
$$
We find that \eqref{24mid} is also similar  to \cite[Lemma 4.4]{Ch-Ga}
$$
\frac{J_1^2}{J_2}\psi(q)=\sum_{1-|n|\leq j\leq |n|}sg(n)(-1)^{j-1}q^{3n^2-n-2j^2+j}.
$$

\section{Hurwitz class numbers and partitions}

In this section, we start with a formula by Humbert \cite[P. 346]{Hu-07}
\begin{align}
\label{smain}
\mathscr{F}_{4,-1}(q)=&\sum_{m=0}^{\infty}\sum_{u=-m}^{m}\frac{q^{(2m+1)^2/4+(2m+1)/2+1/4-u^2}}{1-q^{2m+1}}\\
\nonumber
=&\sum_{m=0}^{\infty}\sum_{u=-m}^{m}\frac{q^{(m+1)^2-u^2}}{1-q^{2m+1}}.
\end{align}
Then we will prove the combinatorial interpretations of $F(4n-1)$ and $H(8n-1)$  which appeared on OEIS (see A238872 and A321440).

Denote $P(n)$ to be the number of strongly unimodal compositions of $n$ with absolute difference of successive parts equal to 1(see A238872 on OEIS). It is easy to see that
$$
n=x+(x+1)+\cdots+(y-1)+y+(y-1)+\cdots+(z+1)+z=y^2-\frac{x(x-1)}{2}-\frac{z(z-1)}{2}.
$$
We find that
\beq
\label{s1id}
\sum_{n=1}^{\infty}P(n)q^n=\sum_{y=1}^{\infty}\sum_{x=1}^{y}\sum_{z=1}^{y}q^{y^2-x(x-1)/2-z(z-1)/2}.
\eeq

\begin{theorem}
For all $n\in \mathbb{N^*}$, we have
$$
P(n)=F(4n-1).
$$
\end{theorem}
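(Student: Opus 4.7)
The plan is to rewrite Humbert's formula \eqref{smain} so that it has the same three-index shape as the generating function \eqref{s1id} for $P(n)$, then prove the resulting polynomial identity by a bijection on index sets.

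First, I would expand the geometric series $1/(1-q^{2m+1}) = \sum_{k\geq 0} q^{k(2m+1)}$ in \eqref{smain} and apply the identity $(m+1)^2 + k(2m+1) = (m+k+1)^2 - k(k+1)$. Substituting $y := m+k+1 \geq 1$, the triple $(m, u, k)$ with $m, k \geq 0$ and $|u| \leq m$ is reparameterized by $(y, u, k)$ with $y \geq 1$, $k \geq 0$, $|u|+k \leq y-1$, yielding
$$\mathscr{F}_{4,-1}(q) = \sum_{y=1}^{\infty} q^{y^2}\sum_{\substack{u \in \mathbb{Z},\, k \geq 0 \\ |u|+k \leq y-1}} q^{-u^2 - k(k+1)}.$$
On the other hand \eqref{s1id} reads $\sum_{n} P(n) q^n = \sum_{y \geq 1} q^{y^2} \sum_{x, z = 1}^{y} q^{-x(x-1)/2 - z(z-1)/2}$. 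Since both inner sums at level $y$ are finite Laurent polynomials in $q$, the theorem reduces (equivalently under $q \mapsto q^{-1}$) to the identity, for each $y \geq 1$,
$$\sum_{\substack{u \in \mathbb{Z},\, k \geq 0 \\ |u|+k \leq y-1}} q^{u^2 + k(k+1)} = \left(\sum_{x=1}^{y} q^{x(x-1)/2}\right)^2. \qquad (\star)$$

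For $(\star)$, I would use the linear change of variables $(p, s) := (u+k,\, k-u)$. A direct computation yields $u^2 + k(k+1) = \frac{1}{2}p(p+1) + \frac{1}{2}s(s+1)$, while $|u|+k = \max(p, s)$ and $2k = p + s$. Hence the left-hand side of $(\star)$ equals
$$\sum_{\substack{p, s \in \mathbb{Z},\, \max(p,s) \leq y - 1 \\ p+s \geq 0,\, p+s \text{ even}}} q^{p(p+1)/2 + s(s+1)/2}.$$
Now exploit $p(p+1) = (-p-1)(-p)$: for each $x' \in \{0, \ldots, y-1\}$, the value $x'(x'+1)/2$ is attained at the two integers $p \in \{x',\, -x'-1\}$, both $\leq y-1$. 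Given $(x', z') \in \{0, \ldots, y-1\}^2$, the four candidate pairs $(p, s) \in \{x',\, -x'-1\} \times \{z',\, -z'-1\}$ can then be filtered: exactly one of them satisfies both $p + s \geq 0$ and $p+s$ even, namely $(x', z')$ when $x' + z'$ is even, and whichever of $(x',\, -z'-1)$, $(-x'-1,\, z')$ has nonnegative sum when $x' + z'$ is odd (uniquely determined since then $x' \neq z'$). Setting $(x, z) = (x'+1, z'+1)$ thus produces a bijection between the two sides of $(\star)$ preserving exponents.

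The main subtlety is the parity/sign case analysis in the last step; the rest of the proof is routine algebraic manipulation, with no modular-form machinery required.
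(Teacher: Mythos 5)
Your proof is correct and follows essentially the same route as the paper's: both expand the geometric series in Humbert's formula, complete the square to extract $q^{y^2}$, and pass to diagonal coordinates (your $(p,s)=(u+k,\,k-u)$ is the paper's $(x,z)=(k+u,\,k-u)$ shifted by one), finishing with the reflection symmetry $p(p+1)=(-p-1)(-p)$, which is exactly the paper's $B(y,x,z)=B(y,1-x,z)=B(y,x,1-z)$. Your four-candidate case analysis merely makes explicit the index-set bijection that the paper asserts when it replaces the sum over $D_y$ by the sum over the full square $1\le x,z\le y$.
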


\begin{proof}
Let
$$
A(m,u):=\frac{q^{(m+1)^2-u^2}}{1-q^{2m+1}},
$$
and
$$
B(y,x,z):=q^{y^2-x(x-1)/2-z(z-1)/2}.
$$
Then
\begin{align*}
A(m,u)=&\frac{q^{(m+1)^2-u^2}}{1-q^{2m+1}}=\sum_{k=1}^{\infty} q^{m^2-u^2+k(2m+1)}=\sum_{k=1}^{\infty} q^{(k+m)^2-(k+u)(k+u-1)/2-(k-u)(k-u-1)/2}\\
=&\sum_{k=1}^{\infty} B(k+m,k+u,k-u).
\end{align*}
Hence by \eqref{smain} and \eqref{s1id} and noting that
$$
B(y,x,z)=B(y,x,1-z)=B(y,1-x,z),
$$
we have
\begin{align*}
\sum_{n=1}^{\infty}F(4n-1)q^n=&\sum_{m=0}^{\infty}\sum_{u=-m}^{m}A(m,u)=\sum_{m=0}^{\infty}\sum_{u=-m}^{m}\sum_{k=1}^{\infty} B(k+m,k+u,k-u)\\
=&\sum_{y=1}^{\infty}\sum_{(x,z)\in D_y}B(y,x,z)=\sum_{y=1}^{\infty}\sum_{x=1}^{y}\sum_{z=1}^{y}B(y,x,z)=\sum_{n=1}^{\infty}P(n)q^n,
\end{align*}
where
$$
D_y:=\{(x,z):x\leq y, z\leq y, x+z\geq 2, x\equiv z\pmod 2\}.
$$
\end{proof}

Denote $Q(n)$ to  be the number of partitions of $n$ into consecutive parts, all singletons except the largest(see A321440 on OEIS). It is easy to see that
$$
n=(m+1)+(m+2)+\cdots+(l-1)+l+l+\cdots+l=l(l+1)/2-m(m-1)/2+kl
$$
Then
\begin{align}
\label{s2id}
\sum_{n=1}^{\infty}Q(n)q^n=&\sum_{l=1}^{\infty}\sum_{m=1}^{l}\sum_{k=0}^{\infty}q^{l(l+1)/2-m(m-1)/2+kl}\\
\nonumber
=&\sum_{l=1}^{\infty}\sum_{m=1}^{l}\frac{q^{l(l+1)/2-m(m-1)/2}}{1-q^l}.
\end{align}

\begin{theorem}
For all $n\in \mathbb{N^*}$, we have
$$
Q(n)=H(8n-1).
$$
\end{theorem}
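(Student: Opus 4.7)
The plan is to mirror the preceding proof of $P(n)=F(4n-1)$, reducing the statement to the Humbert-style Lambert series identity
\[
\mathscr{F}_{8,-1}(q)\;=\;\sum_{l=1}^{\infty}\sum_{m=1}^{l}\frac{q^{l(l+1)/2-m(m-1)/2}}{1-q^{l}},
\]
which is exactly the right-hand side of \eqref{s2id}. Given that identity the theorem follows immediately: because $8n-1\equiv 7\pmod{8}$ one has $H(8n-1)=F(8n-1)$, so equating coefficients of $q^n$ on the two sides of the display above yields $H(8n-1)=Q(n)$.

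First I would try to extract the required Lambert series directly from Humbert \cite{Hu-07}, since \eqref{smain} is already taken from page~346 of that paper and the same source treats $\mathscr{F}_{8,-1}$ in parallel with $\mathscr{F}_{4,-1}$. If a clean citation is not at hand, the identity can be derived from the Appell-Lerch series \eqref{8ALid} by expanding
\[
\frac{1}{1+q^{4k-1}}\;=\;\sum_{j=0}^{\infty}(-1)^{j}q^{j(4k-1)}
\]
and regrouping the resulting alternating bilateral sum as a Lambert-type series; the manipulation is of the same flavour as in the proof of \eqref{8th2id}, though lighter since no theta expansion is required.

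Once the Lambert series is in place, the combinatorial step is essentially the one already carried out for $P(n)$. Writing $1/(1-q^l)=\sum_{k\ge 0}q^{kl}$, each monomial $q^{l(l+1)/2-m(m-1)/2+kl}$ on the right corresponds bijectively to the partition
\[
(m)+(m+1)+\cdots+(l-1)+l+\underbrace{l+\cdots+l}_{k\text{ extra copies}},
\]
so that $m$ picks the smallest singleton, $l$ the largest part (the only one allowed to repeat), and $k+1$ its multiplicity; this is exactly the data counted by $Q(n)$.

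The main obstacle is producing the Humbert-style Lambert series for $\mathscr{F}_{8,-1}(q)$. Everything beyond that reduces to elementary re-indexing, but if the citation from \cite{Hu-07} is not directly usable one needs some care with the Lambert conversion from \eqref{8ALid}: the alternating $1/(1+q^{4k-1})$ expansion produces bilateral sums that must be split according to the sign of $k$ and recombined, and checking positivity of the final exponent $l(l+1)/2-m(m-1)/2+kl$ across all parametrizations is the one place where a slip can easily occur.
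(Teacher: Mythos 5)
Your reduction is the right one, and the two outer steps are sound: since $8n-1\equiv 7\pmod 8$ we indeed have $H(8n-1)=F(8n-1)$, so the theorem is equivalent to $\mathscr{F}_{8,-1}(q)=\sum_{n\ge 1}Q(n)q^n$; and the combinatorial identification of the double Lambert sum with $\sum Q(n)q^n$ is exactly \eqref{s2id}, which is already in place. The problem is that the entire content of the theorem now sits in the identity
\begin{equation*}
\mathscr{F}_{8,-1}(q)\;=\;\sum_{l=1}^{\infty}\sum_{m=1}^{l}\frac{q^{l(l+1)/2-m(m-1)/2}}{1-q^{l}},
\end{equation*}
and neither of your two routes actually delivers it. The citation route is speculative (the paper only takes \eqref{smain}, the $\mathscr{F}_{4,-1}$ formula, from Humbert; you would need to verify that the $\mathscr{F}_{8,-1}$ analogue appears there in usable form). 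The fallback route from \eqref{8ALid} underestimates the difficulty: \eqref{8ALid} reads $\frac{J_2^2}{J_4}\mathscr{F}_{8,-1}(q)=\sum_k\frac{(-1)^{k-1}(4k-1)q^{2k^2}}{1+q^{4k-1}}$, with the theta quotient $J_2^2/J_4$ attached to the left-hand side. Expanding $1/(1+q^{4k-1})$ geometrically and regrouping gives you a Hecke-type double sum for $\frac{J_2^2}{J_4}\mathscr{F}_{8,-1}(q)$, not a Lambert series for $\mathscr{F}_{8,-1}(q)$ itself; stripping off the theta factor is precisely the nontrivial step (it is the same kind of coefficient-matching work done in the proof of \eqref{8th2id}), so the claim that the manipulation is ``lighter since no theta expansion is required'' does not hold.

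For comparison, the paper obtains the missing identity from \eqref{smain} rather than from \eqref{8ALid}. It splits the Humbert sum according to the parity of $m-u$, reparametrizes each half by $(m,u)=(s+t,s-t)$ resp.\ $(s+t+1,s-t)$, and reads off the even part to get
\begin{equation*}
\mathscr{F}_{8,-1}(q)=q\sum_{sg(s)=sg(t)}sg(s)\frac{q^{2st+2s+2t}}{1-q^{2(s+t)+1}}.
\end{equation*}
It then expands the geometric factor to write this as a symmetric triple sum $q\sum_{sg(s)=sg(t)=sg(r)}q^{2(st+sr+tr)+2s+2t+r}$ and swaps the roles of $t$ and $r$ to convert it into $q\sum_{sg(s)=sg(t)}sg(s)\frac{q^{2st+2s+t}}{1-q^{2(s+t)+2}}$, which is the reparametrized form of \eqref{s2id}. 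That symmetrization of the triple sum is the key idea your proposal is missing; without it (or an equivalent substitute) the argument does not close.
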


\begin{proof}
It is easy to see that
$$
\sum_{l=1}^{\infty}\sum_{m=1}^{l}\frac{q^{l(l+1)/2-m(m-1)/2}}{1-q^l}=\sum_{D}\frac{q^{l(l+1)/2-m(m-1)/2}}{1-q^l},
$$
where
$$
D=\{(l,m):1-l\leq m\leq l,m\equiv l+1\pmod 2\}.
$$
So replacing $l$, $m$ by $s+t+1$ and $s-t$ respectively in \eqref{s2id} we have
\begin{align*}
\sum_{n=1}^{\infty}Q(n)q^n=&q\sum_{s=0}^{\infty}\sum_{t=0}^{\infty}\frac{q^{2st+2s+t}}{1-q^{s+t+1}}=q\sum_{s=0}^{\infty}\sum_{t=0}^{\infty}\frac{q^{2st+2s+t}}{1-q^{2s+2t+2}}(1+q^{s+t+1})\\
=&q\sum_{sg(s)=sg(t)}sg(s)\frac{q^{2st+2s+t}}{1-q^{2(s+t)+2}}.
\end{align*}
Replacing $m$, $u$ by $s+t$ and $s-t$ respectively on the right-hand side of \eqref{smain}, we have
\beq
\label{s2id1}
\sum_{D_0}\frac{q^{(m+1)^2-u^2}}{1-q^{2m+1}}=\sum_{s=0}^{\infty}\sum_{t=0}^{\infty}\frac{q^{4st+2s+2t+1}}{1-q^{4(s+t)+2}}+\sum_{s=0}^{\infty}\sum_{t=0}^{\infty}\frac{q^{4st+4s+4t+2}}{1-q^{4(s+t)+2}}.
\eeq
Replacing $m$, $u$ by $s+t+1$ and $s-t$ respectively on the right-hand side of \eqref{smain}, we have
\beq
\label{s2id2}
\sum_{D_1}\frac{q^{(m+1)^2-u^2}}{1-q^{2m+1}}=\sum_{s=0}^{\infty}\sum_{t=0}^{\infty}\frac{q^{4st+4s+4t+4}}{1-q^{4(s+t)+6}}+\sum_{s=0}^{\infty}\sum_{t=0}^{\infty}\frac{q^{4st+6s+6t+7}}{1-q^{4(s+t)+6}},
\eeq
where
$$
D_i=\{(m,u):-m\leq u\leq m,m-u\equiv i\pmod 2\}.
$$
By \eqref{smain}, \eqref{s2id1} and \eqref{s2id2} we have
\begin{align*}
\mathscr{F}_{8,-1}(q)=&\sum_{s=0}^{\infty}\sum_{t=0}^{\infty}\frac{q^{2st+2s+2t+1}}{1-q^{2(s+t)+1}}+\sum_{s=0}^{\infty}\sum_{t=0}^{\infty}\frac{q^{2st+2s+2t+2}}{1-q^{2(s+t)+3}}\\
=&q\sum_{sg(s)=sg(t)}sg(s)\frac{q^{2st+2s+2t}}{1-q^{2(s+t)+1}}.
\end{align*}
Hence
$$
\mathscr{F}_{8,-1}(q)=\sum_{n=1}^{\infty}Q(n)q^n,
$$
since
\begin{align*}
&q\sum_{sg(s)=sg(t)}sg(s)\frac{q^{2st+2s+2t}}{1-q^{2(s+t)+1}}=q\sum_{sg(s)=sg(t)=sg(r)}q^{2(st+sr+tr)+2s+2t+r}\\
=&q\sum_{sg(s)=sg(t)=sg(r)}q^{2(st+sr+tr)+2s+t+2r}=q\sum_{sg(s)=sg(t)}sg(s)\frac{q^{2st+2s+t}}{1-q^{2(s+t)+2}}.
\end{align*}
\end{proof}

\begin{cor}
For all $n\in \mathbb{N^*}$, we have
$$
Q(n)=P(2n).
$$
\end{cor}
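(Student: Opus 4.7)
The corollary is an immediate consequence of the two theorems just established, combined with the definition of Kronecker's function $F$ from the introduction. The plan is simply to chain together the identity $P(n)=F(4n-1)$ (applied with $n$ replaced by $2n$) with the identity $Q(n)=H(8n-1)$, after matching the values of $F$ and $H$ on residues $\equiv 7 \pmod 8$.

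More precisely, I would begin by substituting $2n$ into the first theorem of this section to get
\[
P(2n) = F\bigl(4(2n)-1\bigr) = F(8n-1).
\]
Next, since $8n-1 \equiv 7 \pmod 8$, the definition of Kronecker's $F$ recalled in the introduction, namely $F(8m+7)=H(8m+7)$, gives $F(8n-1)=H(8n-1)$. Finally, by the second theorem of this section, $H(8n-1)=Q(n)$, so the three equalities combine to yield $P(2n)=Q(n)$.

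There is no real obstacle here: both nontrivial identities have already been established, and the only thing to check is the congruence $8n-1 \equiv 7 \pmod 8$, which is immediate. The corollary could therefore be presented as a short display chaining these equalities together, with no additional lemmas required.
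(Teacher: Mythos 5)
Your proposal is correct and matches the paper's (implicit) argument: the corollary follows by chaining $P(2n)=F(8n-1)$ with the Kronecker relation $F(8m+7)=H(8m+7)$ and then $H(8n-1)=Q(n)$. The congruence check $8n-1\equiv 7\pmod 8$ is exactly the point that makes the middle step valid, and nothing further is needed.
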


\subsection*{Acknowledgements}
The first author was supported by Shanghai Sailing Program (21YF1413600). The second author was supported by the fellowship of China Postdoctoral Science Foundation (2022M712422).

\end{document}